\newtheorem{thm}{Theorem}
\newtheorem{lem}[thm]{Lemma}
\newtheorem{prop}[thm]{Proposition}
\newtheorem{cor}[thm]{Corollary}
\newtheorem{remark}{Remark}
\DeclareMathOperator{\diam}{diam}
\DeclareMathOperator{\dist}{dist}
\def \NN {\mathbb{N}}
\def \RR {\mathbb{R}}
\def \ra{\rightarrow}
\def \tM{\tilde{M}}
\def \tP{\tilde{P}}
\def \tG{\tilde{G}}
\title{Martin representation and Relative Fatou Theorem   for  fractional Laplacian
with a gradient perturbation}
\author{Piotr Graczyk \\
\scriptsize  LAREMA, Universit\'e d'Angers, 
2 Bd Lavoisier, \\
 \scriptsize 49045 Angers Cedex 1, France\\
 \scriptsize Piotr.Graczyk@univ-angers.fr\\
Tomasz Jakubowski \\
\scriptsize Institute of Mathematics, Wroc\l{}aw University of Technology, Wyb. Wyspia\'nskiego 27,\\
\scriptsize 50-370 Wroc\l{}aw, Poland\\
\scriptsize Tomasz.Jakubowski@pwr.wroc.pl\\ 
Tomasz  Luks\\
\scriptsize LAREMA, Universit\'e d'Angers, 
2 Bd Lavoisier, \\
 \scriptsize 49045 Angers Cedex 1, France\\
\scriptsize luks@math.univ-angers.fr\tiny}
\date{\empty}
\begin{document}
\maketitle
\selectlanguage{english}
\begin{abstract}
Let $L=\Delta^{\alpha/2}+ b\cdot\nabla$ with $\alpha\in(1,2)$.
We prove the Martin representation and the Relative Fatou Theorem for non-negative singular $L$-harmonic functions 
on ${\mathcal C}^{1,1}$ bounded open sets.
\footnote{This research was partially supported by ANR-09-BLAN-0084-01 and grants MNiSW N N201 397137, MNiSW N N201
422539.\\% MNiSW N N201 373136
\noindent \emph{Keywords:} gradient perturbation, fractional Laplacian, Ornstein-Uhlenbeck stable process,
Martin representation, Relative Fatou Theorem\\
\noindent \emph{MSC:} 60J50, 60J75, 60J45, 42B30, 31B25, 30H10}
\end{abstract}
\selectlanguage{english} 

%%%%%%%%%%%%%%%%%%%%%%%%%%%%%%%%%%%%%%%%%%%%%%%%%%%%%%%%%%%%%%%%%%%%%%%%%
\section{Introduction and Preliminaries}
%%%%%%%%%%%%%%%%%%%%%%%%%%%%%%%%%%%%%%%

Analysis of  harmonic functions related to
fractional powers $\Delta^{\alpha/2}$  of the Laplace operator is an important topic,
intensely developed in recent years, also
for perturbations of the operator $\Delta^{\alpha/2}$, see, e.g., \cite{AlibImbert, BrandKarch, FinoKarch, KBTKMK, Caff, TJ2, MR, KBbook} and references therein. 

From the probabilistic point of view,   
stable stochastic processes with gradient perturbations on $\RR^d$, $d\ge 2$, i.e. with the infinitesimal generator
\begin{equation}\label{eq:L}
 L=\Delta^{\alpha/2}+ b\cdot\nabla,
 \end{equation}
where $\alpha\in(0,2)$, constitute  an important class
of jump processes, intensely studied in recent years. Their most  celebrated case are
the Ornstein-Uhlenbeck
stable processes with $b(x)=\lambda x$, $\lambda\in\RR$. They have important physical and financial applications and form a part of
L\'evy-driven Ornstein-Uhlenbeck processes, cf. \cite{Novikov1,Novikov2}.

The motivations of this paper were  to:\\

(i) establish the theory of the  Martin representation
for singular $L$-harmonic non-negative functions

(ii) study  boundary limit properties of  $L$-harmonic functions and
to obtain  a Relative Fatou Theorem for them

(iii) develop the theory of Hardy spaces of $L$-harmonic functions.\\

The topics (i) and (ii) are addressed in this article and the subject (iii) in a forthcoming
paper.

All  these topics are fundamental for the knowledge of $L$-harmonic functions.
 The topics (i) and (ii) are
well developed for  fractional Laplacians.
The Martin representation was established in this case in \cite{KBHirosh, CS1, SW, KMKS, KBTKMK}, see also \cite{KW} 
for a general setting of Markov processes. 
The Relative Fatou Theorem was proved for $\alpha$-harmonic functions on $\mathcal C^{1,1}$ sets in \cite{BD}, 
on Lipschitz sets in \cite{MRFatou}, and on the so-called $\kappa$-fat sets in \cite{Kim}, 
see also the survey \cite[Chapter 3]{KBbook}. 
Furthermore, some important  variants of stable processes such as relativistic, censored 
and truncated stable processes were studied from the point of view of topics (i) and (ii),
see \cite{CK2, CK, KimL, KimS} and  \cite[Section 3.4]{KBbook}. Nevertheless, the methods
of these extensions do not apply to the operator $L$ of the form (\ref{eq:L}).
Let us notice that all our results are also true for Ornstein-Uhlenbeck stable processes.

On the other hand, Martin representations and boundary properties of harmonic functions were 
widely studied in the case of diffusion operators, see, e.g., \cite{Stein1, Stein2, Ancona, Wu, Doob, JerisonKenig, Bass, Aikawa, ArmGard} for  
the results on the classical Laplacian $\Delta=\sum_{i=1}^d\partial^2/\partial x_i^2$, and \cite{Widman, CrFZ, CranstonZ, Murata, Pinchover, TLuksStudia} 
for its various generalizations.

Let us mention that the methods of this article give also interesting new results
for operators different from $L$. 
In the case of  Laplacians with gradient perturbations, i.e. $\alpha=2$, we get new perturbation formulas for the Green function
and the Martin and Poisson kernels (Section  \ref{rem:diffusionCase}).
\\

The potential theory of stable stochastic processes with gradient perturbations
was started in the Ornstein--Uhlenbeck case by T. Jakubowski \cite{TJ1,TJ2}.
Next, in the general context of gradient perturbations and $\alpha>1$,
with a function $b$ from the Kato class ${\mathcal K_d^{\alpha-1}}$
it was developed by K. Bogdan and T. Jakubowski \cite{KBTJ0,KBTJ} and by Z.-Q. Chen, P. Kim and R. Song \cite{CKS}.
Our work is a natural continuation of the research presented in \cite{KBTJ}.

In particular, we send the reader to \cite{KBTJ} for the definitions of
the fractional Laplacian, ${\mathcal C}^{1,1}$ sets,  Green functions and Poisson kernels,
with respect to both operators $\Delta^{\alpha/2}$ and $L$.  The definitions of
$\alpha$-harmonic, regular $\alpha$-harmonic and singular  $\alpha$-harmonic functions
can be found e.g. in the monography \cite[page 61]{KBbook} and are analogous for
$L$-harmonic functions. \\

Throughout this paper, like in \cite{KBTJ}, we suppose $1<\alpha<2$, unless stated otherwise. We consider an open set 
$D$ of class ${\mathcal C}^{1,1}$ and a 
vector field $b\in {\mathcal K_d^{\alpha-1}}$ on $\RR^d$ i.e.
$$
\lim_{\epsilon\ra 0}\sup_{x\in\RR^d} \int_{|x-z|<\epsilon} |b(z)| |x-z|^{\alpha-1-d} dz=0.
$$
The potential theory objects related to the operator $L$ defined in (\ref{eq:L}) will be denoted
with a tilde $\tilde\ $,
while those related to  the operator $\Delta^{\alpha/2}$ will be denoted without it.
In particular $\tilde G_D$ is the Green function of $D$ for $L$
and $G_D$ is the Green function of $D$ for $\Delta^{\alpha/2}$.
We fix throughout this paper a point $x_0\in D$ and define the Martin kernel of $D$ for $\Delta^{\alpha/2}$ 
by
$$
M_D(x,Q)=\lim_{y\ra Q}\frac{G_D(x,y)}{G_D(x_0,y)},\ \ x\in D, Q\in \partial D.
$$
The $L$-Martin kernel is  defined by
$$
\tilde M_D(x,Q)=\lim_{y\ra Q}\frac{\tilde G_D(x,y)}{\tilde G_D(x_0,y)},\ \ x\in D, Q\in \partial D
$$
and we show in Section \ref{sec:Martin} its existence.

The starting point of the research contained in this paper are  the following
mutual estimates of Green functions and Poisson kernels for $L$ and $\Delta^{\alpha/2}$ (see \cite[Theorem 1 and (72)]{KBTJ}).\\[1mm]
{\bf Comparability Theorem.}
{\it
There exists a constant $C=C(\alpha,b,D)$ such that for all $x,y\in D$ and $z \in (\overline D)^c$},
 \begin{equation}\label{eq:GreenCompar}
C^{-1} G_D(x,y)\le \tilde  G_D(x,y) \le C  G_D(x,y),
 \end{equation}
 \begin{equation}\label{eq:PoissCompar}
C^{-1} P_D(x,z) \le \tilde  P_D(x,z)\le C P_D(x,z).
 \end{equation}

 One of the main elements of the proof of (\ref{eq:GreenCompar})
 is the following perturbation formula, that will be also very useful in our present
 work (see \cite[Lemma 12]{KBTJ}).\\[1mm]
{\bf  Perturbation formula for Green functions.}
{\it 
Let $x,y\in\RR^d, x\not=y.$ We have
\begin{equation}\label{eq:pertG}
 \tilde G_D(x,y)=  G_D(x,y)+\int_D \tilde G_D(x,z) b(z)  \cdot \nabla_z  G_D(z,y) dz.
 \end{equation}
}
We start our paper by proving in Section \ref{sec:prepa1}  a generalization of
the Comparability Theorem: according to Lemma \ref{lem:PoissonKer}, the constant $C$
in  the estimates (\ref{eq:GreenCompar}) may be chosen  the same for
sets $D_r$ sufficiently close to $D$. The same phenomenon holds also for
the Poisson kernels $\tilde  P_D(x,y)$ and $  P_D(x,y)$.
In Section   \ref{sec:prepa2} we prove a uniform integrability result, that will be
needed in proving the main results of the paper, contained in Sections \ref{sec:Martin}  and \ref{sec:boundary}.

In Section  \ref{sec:Martin}  we  develop the Martin theory of
$L$-harmonic functions. We prove the existence of the $L$-Martin kernel which is $L$-harmonic
(Theorems \ref{th:MartinPerturb} and \ref{th:HarmonicityMartin}).
 Next we obtain   the Martin representation of singular $L$-harmonic
non-negative functions on $D$, see Theorem \ref{th:MartinRepr}.

The formula (\ref{eq:pertG}) allows us to prove  very useful perturbation formulas for Martin kernels (\ref{eq:PerturbMartin}), Poisson kernels (\ref{eq:pertP})
and singular $\alpha$-harmonic functions (\ref{eq:perturb_v}). 
 In Section \ref{rem:diffusionCase},   (\ref{eq:pertG}) and  (\ref{eq:PerturbMartin})
 are proved 
in the diffusion case $\alpha=2$. Also a  perturbation formula (\ref{eq:pertPoissDiff}) for the $L$-Poisson kernel is derived.

Section \ref{sec:boundary} is devoted to an important fine boundary property
of singular $L$-harmonic functions: the Relative Fatou Theorem (Theorem \ref{th:Fatou}).
We provide a proof of this theorem based on the perturbation formula for singular $\alpha$-harmonic functions (\ref{eq:perturb_v}).

%%%%%%%%%%%%%%%%%%%%%%%%%%%%%%%%%%%%%%%%%%%%%%%%%%%%%%%%%%%%%
\section{Preparatory results}\label{sec:prepa}

In this section we prove some results, interesting independently,
that will be useful in proving the main results of the paper, coming in the next 
sections.
\subsection{Uniform comparability of Green functions and Poisson kernels}\label{sec:prepa1}
In what follows, $\RR^d$ denotes the Euclidean space of dimension $d\ge
2$, $dy$ stands for the Lebesgue measure on $\RR^d$. Without further mention we will only consider Borelian sets, measures and functions in $\RR^d$.
By $x\cdot y$ we denote the Euclidean scalar product of $x,y\in \RR^d$. Writing $f \approx g$ we mean that there is a constant $C>0$ such that $C^{-1}g \le f \le Cg$.  
As usual, $a \land b =\min(a,b)$ and $a\vee b = \max(a,b)$. 
We let $B(x,r)=\{y\in \RR^d: |x-y|<r\}$.
For $U\subset \RR^d$ we denote 
$$\delta_U(x) =\dist(x,U^c)\,,$$ 
the distance to the complement of $U$. 
\begin{center} \em In what follows $D$ is a bounded $\mathcal C^{1,1}$ open set. \end{center}
For $r\geq0$ define 
$$ D_r=\left\{x\in D: \delta_D(x)>r\right\}. $$ 
When $r$ is sufficiently small, then $D_r$ is also a $\mathcal C^{1,1}$ open set, 
 see \cite[Lemma 5]{MR}, and one may show that the localization radius of $D_r$
 varies continuously with respect to $r$.
 
In the sequel we will often use the estimates of the Green function (\cite{TK}, \cite{CS}, see also \cite{TJ-pms}) of a $\mathcal C^{1,1}$ open set
  \begin{equation}\label{eq:Green}
 G_D(y,z)\approx |y-z|^{\alpha-d} \left( \frac{\delta_D(y)^{\alpha/2}\delta_D(z)^{\alpha/2} }{|y-z|^\alpha } \wedge 1\right),
  \end{equation}
  and of the Martin kernel (\cite{CS1})
  \begin{equation}\label{eq:Martin}
 M_D(x,Q)\approx \frac{\delta_D(x)^{\alpha/2}}{|x-Q|^d }.
  \end{equation}
  Moreover,  in the stable case, the estimates (\ref{eq:Green})  are uniform when we consider the sets $D_r$ sufficiently close to $D$,
  i.e. there exist constants $c,\epsilon_0>0$ depending only on $D$ and $\alpha$ such that for all $r\in [0,\epsilon_0]$ and $x,y\in D_r$ 
 we have
 \begin{eqnarray}\label{eq:GreenUnif}
 c^{-1} |y-z|^{\alpha-d} \left( \frac{\delta_{D_r}(y)^{\alpha/2}\delta_{D_r}(z)^{\alpha/2} }{|y-z|^\alpha } \wedge 1\right) \le
 G_{D_r}(y,z)\\
 \le c|y-z|^{\alpha-d} \left( \frac{\delta_{D_r}(y)^{\alpha/2}\delta_{D_r}(z)^{\alpha/2} }{|y-z|^\alpha } \wedge 1\right),
 \nonumber \end{eqnarray} 
  see \cite[Theorem 21]{TJ-pms} and \cite[Lemma 5]{MR}. We will  now show analogous uniformity of constants for the fractional Laplacian
   with a gradient perturbation.
  
 %%%%%%%%%%%%%%%%%%%%%%%%%%%%%%%%%%%%%%%%%%%%%%%%%%%%%%%%%%%%%%%%%%%%%%%%%%%%%%%%%%%%%%%%%%%%%%%%%%%%%%%%%
 \begin{lem}\label{lem:PoissonKer}
 (i) There exist constants $c,\epsilon_0>0$ depending only on $D$ and $\alpha$ such that for all $r\in [0,\epsilon_0]$ and $x,y\in D_r$ 
 we have
 $$ c^{-1}G_{D_r}(x,y)\leq \tilde{G}_{D_r}(x,y)\leq cG_{D_r}(x,y).$$

 (ii)There exist constants $C,\epsilon_0>0$ depending only on $D$ and $\alpha$ such that for all $r\in [0,\epsilon_0]$, $x\in D_r$ and $y\in D^c_r$ we have
 $$ C^{-1}P_{D_r}(x,y)\leq \tilde{P}_{D_r}(x,y)\leq CP_{D_r}(x,y).$$
 \end{lem}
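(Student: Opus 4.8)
The plan is to bootstrap the uniform Green function comparability in \eqref{eq:GreenUnif} (available on all $D_r$ with $r\in[0,\epsilon_0]$, constants depending only on $D,\alpha$) together with the perturbation formula \eqref{eq:pertG} applied on $D_r$ in place of $D$, mimicking the proof of the Comparability Theorem from \cite{KBTJ} but carefully tracking that every estimate used is one of the \emph{uniform} ones. First I would record the perturbation identity on $D_r$:
$$
\tilde G_{D_r}(x,y)=G_{D_r}(x,y)+\int_{D_r}\tilde G_{D_r}(x,z)\,b(z)\cdot\nabla_z G_{D_r}(z,y)\,dz,
$$
which is valid since each $D_r$ is itself a bounded $\mathcal C^{1,1}$ open set for $r$ small. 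The key analytic input is a \emph{uniform} gradient estimate for $G_{D_r}$, namely $|\nabla_z G_{D_r}(z,y)|\le c\,\delta_{D_r}(z)^{-1}G_{D_r}(z,y)$ (and the companion bound $|\nabla_z G_{D_r}(z,y)|\le c\,|z-y|^{\alpha-1-d}$ away from the diagonal), with $c$ depending only on $D,\alpha$; this follows from the interior/boundary gradient estimates for the fractional Green function combined with the fact (quoted in the excerpt from \cite{MR,TJ-pms}) that the $\mathcal C^{1,1}$ characteristics of $D_r$ — the localization radius in particular — vary continuously, hence are bounded below uniformly in $r\in[0,\epsilon_0]$.

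Next I would set up the iteration exactly as in \cite[proof of Theorem 1]{KBTJ}. Writing $\tilde G_{D_r}=\sum_{n\ge0}G_{D_r}^{(n)}$ where $G_{D_r}^{(0)}=G_{D_r}$ and $G_{D_r}^{(n+1)}(x,y)=\int_{D_r}G_{D_r}^{(n)}(x,z)\,b(z)\cdot\nabla_z G_{D_r}(z,y)\,dz$, one proves by induction a bound $|G_{D_r}^{(n)}(x,y)|\le (C\eta)^n G_{D_r}(x,y)$ where $\eta=\eta(\epsilon)$ is the Kato-norm modulus of $b$ on balls of radius $\epsilon$, using the $3G$-type inequality for $G_{D_r}$ together with the uniform gradient bound. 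The decisive point is that the $3G$ inequality — $G_{D_r}(x,z)G_{D_r}(z,y)/G_{D_r}(x,y)\lesssim |x-z|^{\alpha-d}+|z-y|^{\alpha-d}$ — holds with a constant independent of $r$; this is a direct consequence of the uniform two-sided bound \eqref{eq:GreenUnif}, since the $3G$ inequality for $\mathcal C^{1,1}$ sets is a purely computational consequence of the sharp Green estimate \eqref{eq:Green}. Choosing $\epsilon$ small enough that $C\eta(\epsilon)\le 1/2$ — a choice that depends only on $b$, $D$, $\alpha$ and \emph{not} on $r$ — the series converges and yields $\tilde G_{D_r}\approx G_{D_r}$ on $D_r$ with $c=c(D,\alpha,b)$ uniform in $r\in[0,\epsilon_0]$, proving (i). A technical wrinkle is that the localization argument splits $D_r$ into a neighborhood of each boundary point (handled by the boundary Green estimate) and the bulk (handled by the interior estimate); one must check the covering can be done with a bounded number of charts uniformly in $r$, which again is exactly the continuity-of-characteristics statement already cited.

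For (ii), I would pass from the Green function to the Poisson kernel via the Ikeda--Watanabe formula
$$
P_{D_r}(x,y)=\mathcal A_{d,\alpha}\int_{D_r}G_{D_r}(x,z)\,|z-y|^{-d-\alpha}\,dz,\qquad x\in D_r,\ y\in D_r^{\,c},
$$
and its $L$-analogue $\tilde P_{D_r}(x,y)=\mathcal A_{d,\alpha}\int_{D_r}\tilde G_{D_r}(x,z)\,|z-y|^{-d-\alpha}\,dz$ (the jump kernel of $L$ is the same $|z-y|^{-d-\alpha}$, since $b\cdot\nabla$ is a first-order term and does not alter the L\'evy measure — this is \cite[(72)]{KBTJ}); then part (i), which gives $\tilde G_{D_r}(x,z)\approx G_{D_r}(x,z)$ with a constant uniform in $r$, integrates directly against the positive kernel $|z-y|^{-d-\alpha}$ to give $\tilde P_{D_r}(x,y)\approx P_{D_r}(x,y)$ with the same uniform constant. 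Thus (ii) is essentially a corollary of (i) once the Ikeda--Watanabe representation is in place on each $D_r$.

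The main obstacle, and the step I would spend the most care on, is the uniformity of the constants in the gradient estimate and the $3G$ inequality for the family $\{D_r\}$: everything else is a faithful repetition of \cite{KBTJ}, but that repetition is only legitimate because all the building-block estimates are known to be $r$-uniform. I would therefore isolate, as a preliminary lemma or a remark, the statement that the interior and boundary gradient estimates for $G_{D_r}$, and hence the $3G$ inequality, hold with constants depending only on $d,\alpha$ and the $\mathcal C^{1,1}$ data of $D$ (localization radius and diameter bounds), invoking \cite[Lemma 5]{MR} and \cite[Theorem 21]{TJ-pms} for the continuity of these data in $r$; with that in hand the convergence of the perturbation series is uniform and both parts of the lemma follow.
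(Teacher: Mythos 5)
Your treatment of (ii) is exactly the paper's: once (i) is known with an $r$-uniform constant, the Ikeda--Watanabe formula with the unchanged L\'evy kernel $\mathcal{A}_{d,\alpha}|z-y|^{-d-\alpha}$ transfers the comparability from $\tilde G_{D_r}\approx G_{D_r}$ to $\tilde P_{D_r}\approx P_{D_r}$. The gap is in your proof of (i). You propose to run the perturbation series for \eqref{eq:pertG} \emph{globally} on $D_r$, with the induction bound $|G^{(n)}_{D_r}|\le (C\eta)^nG_{D_r}$ and $\eta=\eta(\epsilon)$ the Kato modulus of $b$ at scale $\epsilon$, and then to choose $\epsilon$ so that $C\eta(\epsilon)\le 1/2$. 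But the iterated integral runs over all of $D_r$, whose diameter is comparable to $\diam(D)$ and is not shrunk by any choice of $\epsilon$; the quantity that must be $<1$ is
$$
\sup_{x,y\in D_r}\frac{1}{G_{D_r}(x,y)}\int_{D_r}G_{D_r}(x,z)\,|b(z)|\,|\nabla_zG_{D_r}(z,y)|\,dz ,
$$
and via $3G$ plus the gradient bound this is controlled by Kato-type integrals over the \emph{whole} set, which for a fixed $b$ and a fixed large $D$ need not be small (and typically is not). Indeed, if your series converged with ratio $1/2$ the comparability constant would be an absolute $2$, whereas in the Comparability Theorem $C=C(\alpha,b,D)$ genuinely depends on $b$ and $D$ (think of the Ornstein--Uhlenbeck case with large drift). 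In \cite{KBTJ} the series argument is only valid for sets of \emph{small diameter} (their Lemma 13, based on Lemma 11), where smallness of the set, not of the Kato scale alone, makes the ratio $<1$.

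What is then missing from your argument is precisely the structural part that the paper's proof consists of: after the small-set comparison (which you can indeed make $r$-uniform thanks to \eqref{eq:GreenUnif}), one needs the Harnack inequality for $L$ and the Boundary Harnack Principle (\cite[Lemmas 15, 16]{KBTJ}), again uniformly in $r\in[0,\epsilon_0]$, and then a localization/chaining argument as in Section 5 of \cite{KBTJ} that patches the boundary pieces and the interior together; this is where the $b$- and $D$-dependent constant is produced. Your closing remark about covering $D_r$ by charts uniformly in $r$ points in the right direction for the uniform Green estimates, but it does not replace the Harnack/BHP chaining step, which is the heart of the matter. So the proposal as written does not prove (i); it needs to be reorganized along the small sets $\to$ uniform Harnack and BHP $\to$ global chaining scheme, checking $r$-uniformity at each of those three stages, which is exactly what the paper does.
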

 %%%%%%%%%%%%%%%%%%%%%%%%%%%%%%%%%%%%%%%%%%%%%%%%%%%%%%%%%%%%%%%%%%%%%%%%%%%%%%%%%%%%%%%%%%%%%%%%%%%%%%%
 \begin{proof}
 In order to show (i) we follow the proof of the Theorem 1 in \cite{KBTJ}.  We analyse below its crucial points.
 
  1. {\it Comparison of Green functions $\tilde G_S(x,y)$ and $G_S(x,y)$ for "small" sets $S$}, \cite[Lemma 13]{KBTJ}, based
 on estimates from  \cite[Lemma 11]{KBTJ}. Thanks to property (\ref{eq:GreenUnif}), we see that
 the comparison of Green functions $\tilde G_{S_r}(x,y)$ and $G_{S_r}(x,y)$ for small  sets $S$
 holds with a common constant $c$, when $r\in [0,\epsilon_0]$.
 
 2. {\it Harnack inequalities for $L$  and the Boundary  Harnack Principle}, \cite[Lemmas 15, 16]{KBTJ}.
 Thanks to 1.,  we get them uniformly  with respect to $r\in [0,\epsilon_0]$. 
 
 3. Now the proof of (i) for any $\mathcal C^{1,1}$ open set $D$ is the same as in Section 5 of \cite{KBTJ}.
 
 The part (ii) is implied by (i), applying the Ikeda-Watanabe formula for the Poisson kernel $\tilde P_D$, see
 \cite[Lemma 6 and (39)]{KBTJ}. 
Recall that the L\'evy system for the process $\tilde X_t$ is given by the L\'evy measure
of the $\alpha$-stable process $X_t$. 
    \end{proof}
    
\noindent An immediate consequence of Lemma~\ref{lem:PoissonKer} and \cite[Theorem 22]{TJ-pms} is the following uniform estimate of the Poisson kernels 
of $L$ for $D_r$.
\begin{cor}\label{cor:LPoissonEst} 
There exist positive constants $C,\epsilon_0$ depending only on $D$, $\alpha$ and $b$ such that for all 
$r\in[0,\epsilon_0]$, $x\in D_r$ and $y\in D_r^c$ we have
 $$
  \frac{C^{-1}\delta_{D_r}^{\alpha/2}(x)}{\delta_{D_r}^{\alpha/2}(y)(1+\delta_{D_r}(y))^{\alpha/2}|x-y|^d}\leq
 \tilde P_{D_r}(x,y)\leq \frac{C\delta_{D_r}^{\alpha/2}(x)}{\delta_{D_r}^{\alpha/2}(y)(1+\delta_{D_r}(y))^{\alpha/2}|x-y|^d}.
$$
\end{cor}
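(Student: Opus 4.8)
The plan is to derive Corollary~\ref{cor:LPoissonEst} by combining the two-sided comparability of the Poisson kernels $\tilde P_{D_r}$ and $P_{D_r}$ from Lemma~\ref{lem:PoissonKer}(ii) with a known uniform two-sided estimate of the \emph{stable} Poisson kernels $P_{D_r}$ of the form
$$
P_{D_r}(x,y)\approx \frac{\delta_{D_r}^{\alpha/2}(x)}{\delta_{D_r}^{\alpha/2}(y)(1+\delta_{D_r}(y))^{\alpha/2}|x-y|^d},\qquad x\in D_r,\ y\in D_r^c,
$$
which is exactly the content of \cite[Theorem 22]{TJ-pms} (the statement there being that this estimate holds with constants uniform over $r\in[0,\epsilon_0']$ for some $\epsilon_0'>0$ depending only on $D$ and $\alpha$, since the $\mathcal C^{1,1}$ localization data of $D_r$ vary continuously with $r$). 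The only genuine work is bookkeeping: the corollary asserts a single pair of constants valid for \emph{both} inequalities and \emph{all} $r$ in a common interval, so I need to make sure the two inputs can be made to live on the same parameter range.

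First I would fix $\epsilon_0 := \min(\epsilon_0^{(1)},\epsilon_0^{(2)})$, where $\epsilon_0^{(1)}$ is the threshold from Lemma~\ref{lem:PoissonKer}(ii) (depending on $D,\alpha$) and $\epsilon_0^{(2)}$ is the threshold from \cite[Theorem 22]{TJ-pms} (also depending only on $D,\alpha$); note that for $r\le\epsilon_0$ the set $D_r$ is indeed a $\mathcal C^{1,1}$ open set, as recalled after the definition of $D_r$. Then, for any $r\in[0,\epsilon_0]$, $x\in D_r$ and $y\in D_r^c$, I chain the inequalities: by Lemma~\ref{lem:PoissonKer}(ii),
$$
C_1^{-1}P_{D_r}(x,y)\le \tilde P_{D_r}(x,y)\le C_1 P_{D_r}(x,y),
$$
and by \cite[Theorem 22]{TJ-pms},
$$
C_2^{-1}\,\frac{\delta_{D_r}^{\alpha/2}(x)}{\delta_{D_r}^{\alpha/2}(y)(1+\delta_{D_r}(y))^{\alpha/2}|x-y|^d}\le P_{D_r}(x,y)\le C_2\,\frac{\delta_{D_r}^{\alpha/2}(x)}{\delta_{D_r}^{\alpha/2}(y)(1+\delta_{D_r}(y))^{\alpha/2}|x-y|^d}.
$$
Multiplying these gives the claim with $C:=C_1 C_2$, which depends only on $D$, $\alpha$ and $b$ (the dependence on $b$ entering through $C_1$). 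Since both $C_1$ and $C_2$ are independent of $r\in[0,\epsilon_0]$, so is $C$, and this is precisely the uniform estimate stated in the corollary.

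There is essentially no obstacle here — the statement is labelled "an immediate consequence", and the substantive uniformity arguments have already been carried out, one in the proof of Lemma~\ref{lem:PoissonKer} (for the perturbation $L$) and one in \cite{TJ-pms} (for the pure stable case on the shrinking domains $D_r$). The only point requiring a word of care is that the constant $\epsilon_0$ in the corollary must be taken as the minimum of the two thresholds so that both ingredients apply simultaneously, and that one should record that this common $\epsilon_0$ still depends only on $D$ and $\alpha$. I would therefore keep the proof to two or three lines: cite Lemma~\ref{lem:PoissonKer}(ii), cite \cite[Theorem 22]{TJ-pms}, take the intersection of the admissible ranges of $r$, and multiply the estimates.
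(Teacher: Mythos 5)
Your proposal is correct and follows exactly the route the paper intends: the corollary is stated as an immediate consequence of Lemma~\ref{lem:PoissonKer}(ii) together with the uniform stable Poisson kernel estimate of \cite[Theorem 22]{TJ-pms}, and your chaining of the two comparabilities with $\epsilon_0$ taken as the smaller of the two thresholds and $C=C_1C_2$ is precisely that argument.
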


\subsection{Derivatives of the Poisson kernel   for $\Delta^{\alpha/2}$}\label{sec:prepa2}

In this section we prove   useful gradient estimates for the Poisson kernel of $\Delta^{\alpha/2}$ for $D$, $0<\alpha<2$.

Consider a ball $B=B(\xi_0,r)\subset\overline{B}\subset D$  and let $P_B$ be the Poisson kernel of $B$
\begin{equation}\label{eq:Poissonball}
 P_B(x,y)=C_\alpha^d \left[ \frac{r^2-|x-\xi_0|^2}{|y-\xi_0|^2-r^2}\right]^{\alpha/2}|x-y|^{-d},\ \ x\in B, y\in \bar B^c
\end{equation}
and equal to 0 elsewhere. By \cite[Lemma 3.1]{KBTKAN},
\begin{equation}
	\label{eq:gradPoisson}
|\nabla_x P_B(x,y)| \le (d+\alpha)\frac{P_B(x,y)}{r-|x-\xi_0|}, \qquad x \in B, y \in (\overline{B})^c.
\end{equation}
We will now show analogous estimate for $\mathcal C^{1,1}$ bounded open sets.
\begin{lem}\label{GradEstimPoisson}
Suppose $0<\alpha<2$. Let $D$ be a bounded $\mathcal C^{1,1}$ open set in $\RR^d$ and let $P_D(x,y)$ be the Poisson kernel of $\Delta^{\alpha/2}$ for $D$. Then we have
\begin{equation}\label{eq:6}
|\nabla_xP_D(x,y)| \le (\alpha+d) \frac{P_D(x,y)}{\delta_D(x)}\,, \quad x\in D\,,y\in(\overline{D})^c.
\end{equation}
\end{lem}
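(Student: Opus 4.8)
The plan is to deduce the gradient estimate for $P_D$ from the corresponding estimate \eqref{eq:gradPoisson} for balls by exploiting the fact that the fractional-Laplacian Poisson kernel of a domain can be recovered from the Poisson kernels of inscribed balls via the strong Markov property of the $\alpha$-stable process. Fix $x\in D$ and $y\in(\overline D)^c$, and let $B=B(x,\delta_D(x))$, the largest ball centered at $x$ contained in $D$. By the strong Markov property applied at the first exit time $\tau_B$ of the stable process from $B$ (and using that the process a.s. leaves $D$ by a jump, so it does not hit $\partial B$ or $\partial D$), one has the representation
\begin{equation}\label{eq:PDrep}
P_D(x,y)=\int_{\overline B^c} P_B(x,z)\,P_D(z,y)\,dz + \text{(term vanishing by harmonicity)},
\end{equation}
or more precisely, since $z\mapsto P_D(z,y)$ is $\alpha$-harmonic in $D$ and in particular in $B$, and since $P_D(\cdot,y)$ extends to a function on $\RR^d$ equal to $0$ on $D^c\setminus\{y\}$, the regular $\alpha$-harmonicity gives exactly $P_D(x,y)=\int_{\overline B^c}P_B(x,z)P_D(z,y)\,dz$. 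This is the key identity I would record first; it is standard and follows from the definitions in \cite{KBbook} together with $b\equiv0$ (pure stable case).

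The second step is to differentiate \eqref{eq:PDrep} in $x$. Since $x$ is the center of $B$, differentiating $P_B(x,z)$ in $x$ while keeping $B$ fixed is legitimate, and one must be slightly careful that the center of the ball is also $x$; the clean way is: for $h$ small, compare $P_D(x+h,y)$ using the \emph{same} ball $B=B(x,\delta_D(x))$, which still satisfies $\overline B\subset D$ and still contains $x+h$ once $|h|<\delta_D(x)-|x-\xi_0|$ — but here $\xi_0=x$ so we need $|h|$ small. Then
\begin{equation*}
\nabla_x P_D(x,y)=\int_{\overline B^c}\nabla_x P_B(x,z)\big|_{\text{center }x}\,P_D(z,y)\,dz,
\end{equation*}
where $\nabla_x$ acts on the first variable of $P_B$ only. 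Applying \eqref{eq:gradPoisson} with $\xi_0=x$, $r=\delta_D(x)$, and noting $r-|x-\xi_0|=\delta_D(x)$, we get the pointwise bound $|\nabla_x P_B(x,z)|\le(d+\alpha)P_B(x,z)/\delta_D(x)$ for $z\in\overline B^c$. Hence
\begin{equation*}
|\nabla_x P_D(x,y)|\le \frac{d+\alpha}{\delta_D(x)}\int_{\overline B^c}P_B(x,z)\,P_D(z,y)\,dz=\frac{d+\alpha}{\delta_D(x)}P_D(x,y),
\end{equation*}
which is \eqref{eq:6}.

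The main obstacle, and the point I would treat most carefully, is the differentiation under the integral sign together with justifying that differentiating $P_B(x,z)$ with $B$ centered at the moving point $x$ contributes only the first-variable gradient. The cleanest route is to use a \emph{fixed} reference ball: take any ball $B_0=B(\xi_0,r_0)$ with $x\in B_0$ and $\overline{B_0}\subset D$, write $P_D(x,y)=\int_{\overline{B_0}^c}P_{B_0}(x,z)P_D(z,y)\,dz$, differentiate using \eqref{eq:gradPoisson} to obtain $|\nabla_xP_D(x,y)|\le(d+\alpha)\,r_0^{-1}(r_0-|x-\xi_0|)^{-1}\cdots$ — no, that gives the wrong constant unless $B_0$ is chosen optimally. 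So the right choice really is $\xi_0=x$, $r_0=\delta_D(x)$, and one must argue that $\nabla_x[P_{B(x,\delta_D(x))}(x,z)]$, where both the evaluation point and the center depend on $x$, still obeys the same bound. This follows because for the explicit kernel \eqref{eq:Poissonball}, along the diagonal $x=\xi_0$ one computes $\nabla_x P_B(x,z)=\nabla_{x}[\,\cdots\,]$ and the $\xi_0$-derivative terms either vanish or are controlled; alternatively, bound the difference quotient $|P_D(x+h,y)-P_D(x,y)|$ directly using two slightly shrunken concentric balls and pass to the limit. I would also remark that dominated convergence for the differentiation is justified by the uniform (in small $h$) integrability of $z\mapsto \sup P_B(x+h,z)P_D(z,y)$, using the explicit form \eqref{eq:Poissonball} and the finiteness of $P_D(x,y)$.
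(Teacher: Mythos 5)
Your overall strategy --- decompose $P_D(\cdot,y)$ over the inscribed ball $B_x=B(x,\delta_D(x))$, differentiate with the ball held fixed, and apply \eqref{eq:gradPoisson} at the center, where $r-|x-\xi_0|=\delta_D(x)$ --- is the paper's proof. However, the key identity you record is false as stated: the correct formula (this is \cite[(29)]{KBTKMK}, which the paper invokes) is
\[
P_D(x,y)=P_{B_x}(x,y)+\int_{B_x^c}P_{B_x}(x,z)P_D(z,y)\,dz,\qquad x\in D,\ y\in(\overline D)^c,
\]
and the additional term $P_{B_x}(x,y)$ is strictly positive, not ``vanishing by harmonicity''. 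Your justification rests on the claim that $P_D(\cdot,y)$, extended by $0$ on $D^c\setminus\{y\}$, is regular $\alpha$-harmonic in $D$; it is not. By the Ikeda--Watanabe formula, $P_D(x,y)=\mathcal{A}_{d,\alpha}\int_D G_D(x,\xi)|\xi-y|^{-d-\alpha}\,d\xi$ is a Green potential, so $\Delta^{\alpha/2}P_D(\cdot,y)=-\mathcal{A}_{d,\alpha}|\cdot-y|^{-d-\alpha}<0$ in $D$ for this zero extension. What is $\alpha$-harmonic is $x\mapsto P^x(X_{\tau_D}\in A)$ extended by $\mathbf{1}_A$ on $D^c$; letting $A$ shrink to $y$, the exterior boundary data degenerates to a Dirac mass at $y$, and the event that the process jumps from $B_x$ directly beyond $\overline D$ into a neighborhood of $y$ is exactly what produces the term $P_{B_x}(x,y)$ missing from your display.

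Fortunately the omission does not destroy the final bound: since $x$ is the center of $B_x$ and $y\in(\overline{B_x})^c$, \eqref{eq:gradPoisson} gives $|\nabla_xP_{B_x}(x,y)|\le(d+\alpha)P_{B_x}(x,y)/\delta_D(x)$, and adding this to your estimate of the integral term recombines, through the corrected identity, into \eqref{eq:6}. Your device of freezing the ball $B_x$ while differentiating only in the first variable, together with dominated convergence, is legitimate and is what the paper does implicitly. So no new idea is needed --- but as written, the step establishing your representation formula fails, and the subsequent equality $\int_{\overline{B}^c}P_B(x,z)P_D(z,y)\,dz=P_D(x,y)$ used at the end is likewise false; both must be replaced by the two-term identity above, after which your argument coincides with the paper's.
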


\begin{proof}
For $x\in D$ denote $B_x=B(x,\delta_D(x))$. In view of \cite[(29)]{KBTKMK} we have
$$
P_D(x,y)=P_{B_x}(x,y)+\int_{B^c_x}P_{B_x}(x,z)P_D(z,y)dz.
$$
By (\ref{eq:gradPoisson}) and bounded convergence we have 
$$
|\nabla_xP_D(x,y)|\leq |\nabla_xP_{B_x}(x,y)|+|\nabla_x\int_{B^c_x}P_{B_x}(x,z)P_D(z,y)dz|
$$
$$
\leq (d+\alpha)\frac{P_{B_x}(x,y)}{\delta_D(x)}+\int_{B^c_x}|\nabla_xP_{B_x}(x,z)|P_D(z,y)dz
\leq (d+\alpha)\frac{P_D(x,y)}{\delta_D(x)}.
$$
\end{proof}

From (\ref{eq:gradPoisson}) and the dominated convergence theorem it follows, that if $f$ is $\alpha$-harmonic in $D$ then
\begin{equation}
		\label{eq:gradharmonic}
\frac{\partial}{\partial{x_i}}f(x) = \int_{B^c} \frac{\partial}{\partial{x_i}}P_B(x,y) f(y)\, dy, \qquad i=1,\ldots,d.
\end{equation} 
The estimate (\ref{eq:gradPoisson}) and (\ref{eq:gradharmonic}) gives (\cite[Lemma 3.2]{KBTKAN})
\begin{lem}\label{GradEstim}
Let $U$ be an arbitrary open set in $\RR^d$ and let $\alpha\in(0,2)$. For every non-negative
function $u$ on $\RR^d$ which is $\alpha$-harmonic in $U$, we have
\begin{equation}
  \label{eq:5}
|\nabla u(x)| \le d \frac{u(x)}{\delta_U(x)}\,, \quad x\in U\,.
\end{equation}
\end{lem}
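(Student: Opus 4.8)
The plan is to reduce to the case of a ball centered precisely at the point $x$ where we estimate the gradient, and to sharpen the constant appearing in \eqref{eq:gradPoisson} by computing $\nabla_x P_B(x,y)$ directly at the \emph{center} of $B$: there the distance-to-the-boundary factor of the Poisson kernel has vanishing gradient, which is exactly what replaces the constant $d+\alpha$ by $d$.

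Concretely, fix $x\in U$; we may assume $u(x)<\infty$, since otherwise there is nothing to prove. For $0<r<\delta_U(x)$ put $B=B(x,r)$, so that $\overline B\subset U$. As $u$ is non-negative and $\alpha$-harmonic in $U\supset\overline B$, formula \eqref{eq:gradharmonic} applies with this $B$ and gives $\tfrac{\partial}{\partial x_i}u(x)=\int_{B^c}\tfrac{\partial}{\partial x_i}P_B(x,y)\,u(y)\,dy$ for $i=1,\dots,d$, while the mean value property yields $u(x)=\int_{B^c}P_B(x,y)\,u(y)\,dy$. The key point is then an elementary computation from the explicit formula \eqref{eq:Poissonball} with $\xi_0=x$: writing $P_B(z,y)=C_\alpha^d\,(r^2-|z-x|^2)^{\alpha/2}(|y-x|^2-r^2)^{-\alpha/2}|z-y|^{-d}$ and differentiating in $z$, the gradient of the factor $(r^2-|z-x|^2)^{\alpha/2}$ vanishes at $z=x$ because $\nabla_z|z-x|^2=0$ there, so only the derivative of $|z-y|^{-d}$ contributes. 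This gives $|\nabla_x P_B(x,y)|=d\,P_B(x,y)/|y-x|\le d\,P_B(x,y)/r$, the last inequality because $|y-x|>r$ for $y\in B^c$.

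Combining, $|\nabla u(x)|\le\int_{B^c}|\nabla_x P_B(x,y)|\,u(y)\,dy\le\frac dr\int_{B^c}P_B(x,y)\,u(y)\,dy=\frac{d\,u(x)}{r}$, and letting $r\uparrow\delta_U(x)$ yields \eqref{eq:5}. I do not expect a genuine obstacle here: the legitimacy of differentiating under the integral sign is already secured by \eqref{eq:gradharmonic}, and the only new ingredient is the observation that centering $B$ at $x$ eliminates the contribution of the $(r^2-|x-\xi_0|^2)^{\alpha/2}$ factor, improving the constant from $d+\alpha$ to the sharp $d$.
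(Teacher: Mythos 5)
Your argument is correct, and the computation at the center of the ball is exactly the right way to obtain the constant $d$. Note that the paper itself offers no proof of Lemma~\ref{GradEstim}: it merely cites \cite[Lemma 3.2]{KBTKAN} and remarks that the lemma ``follows from'' \eqref{eq:gradPoisson} and \eqref{eq:gradharmonic}. Taken literally, that combination only yields the weaker bound $(d+\alpha)\,u(x)/\delta_U(x)$, since \eqref{eq:gradPoisson} carries the constant $d+\alpha$; your observation that choosing $B=B(x,r)$ with $\xi_0=x$ kills the gradient of the factor $(r^2-|z-\xi_0|^2)^{\alpha/2}$ in \eqref{eq:Poissonball}, leaving $|\nabla_x P_B(x,y)|=d\,P_B(x,y)/|x-y|\le d\,P_B(x,y)/r$, is precisely the extra step needed for the sharp constant, and it is in substance the argument of the cited reference. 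The remaining ingredients are all legitimate as you use them: \eqref{eq:gradharmonic} licenses the differentiation under the integral for the ball $B(x,r)\subset\overline{B(x,r)}\subset U$, the mean value property gives $u(x)=\int_{B^c}P_B(x,y)u(y)\,dy<\infty$, and letting $r\uparrow\delta_U(x)$ yields \eqref{eq:5}.
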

Since $G_U(\cdot,y)$ is $\alpha$-harmonic in $U \setminus\{y\}$, for
every $y \in U$ we obtain
\begin{equation}\label{eq:GradGreen}
|\nabla_x G_U(x,y)| \le d \frac{G_U(x,y)}{\delta_U(x) \land |x-y|}\,,
\quad x,y \in U,\,\; x\neq y\,.
\end{equation}

\subsection{A uniform integrability result}\label{sec:prepa3}

One of the important results of \cite{KBTJ} is 
 \begin{lem}\label{lem:UnifInt}
$G_D(y,w)/[\delta(w)\land |y-w|]$ is uniformly in $y$ 
  integrable against $|b(w)| dw$.
\end{lem}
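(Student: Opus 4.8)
The plan is to prove the uniform integrability by exploiting the sharp two-sided estimate (\ref{eq:Green}) for the Green function together with the Kato-class condition on $b$. First I would reduce the bound on $G_D(y,w)/[\delta_D(w)\wedge|y-w|]$ to something that depends on $w$ only in a controllable way. Using (\ref{eq:Green}) and the elementary inequality $\delta_D(w)^{\alpha/2}\le (\delta_D(w)\wedge|y-w|)^{\alpha/2}$ together with $\delta_D(y)\le \diam(D)$, one gets, for $x,y\in D$ and $w\in D$,
\begin{equation}\label{eq:uikey}
\frac{G_D(y,w)}{\delta_D(w)\wedge|y-w|}\le c\,|y-w|^{\alpha-d}\,\frac{\delta_D(w)^{\alpha/2}}{|y-w|^{\alpha}\wedge \delta_D(y)^{\alpha/2}\delta_D(w)^{\alpha/2}}\cdot\frac{1}{\delta_D(w)\wedge|y-w|}.
\end{equation}
Splitting into the two regimes $|y-w|^{\alpha}\le \delta_D(y)^{\alpha/2}\delta_D(w)^{\alpha/2}$ (the ``diagonal'' part) and its complement, and noting that on the diagonal part $\delta_D(w)\wedge|y-w|\ge c|y-w|$ while off it $|y-w|\lesssim 1$, in both cases the bound collapses to a constant multiple of $|y-w|^{\alpha-1-d}$, uniformly in $y$. (This is precisely the type of estimate already used to prove (\ref{eq:GreenCompar}) and the perturbation formula in \cite{KBTJ}.)

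Given this, the claim becomes: $\sup_{y}\int_D |y-w|^{\alpha-1-d}|b(w)|\,dw$ is finite and, more importantly, the integral over $\{w: |y-w|<\varepsilon\}$ tends to $0$ uniformly in $y$ as $\varepsilon\to 0$. The second statement is exactly the defining property of the Kato class $\mathcal K_d^{\alpha-1}$ recalled in the introduction, namely
$$
\lim_{\varepsilon\to0}\sup_{x\in\RR^d}\int_{|x-z|<\varepsilon}|b(z)|\,|x-z|^{\alpha-1-d}\,dz=0.
$$
For finiteness of the full integral, one covers $D$ by finitely many balls of radius $\varepsilon_0$ (with $\varepsilon_0$ chosen from the Kato condition so that each local piece is bounded by $1$), so that $\sup_y\int_D|y-w|^{\alpha-1-d}|b(w)|\,dw\le N<\infty$ with $N$ the number of balls. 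Uniform integrability against $|b(w)|\,dw$ then follows: given $\eta>0$, pick $\varepsilon$ so the $\varepsilon$-local Kato tails are below $\eta$; then for any measurable $A\subset D$,
$$
\int_A \frac{G_D(y,w)}{\delta_D(w)\wedge|y-w|}\,|b(w)|\,dw\le c\Big(\eta+\varepsilon^{\alpha-1-d}\!\!\int_{A\cap\{|y-w|\ge\varepsilon\}}\!\!|b(w)|\,dw\Big),
$$
and the last integral is small once $\int_A|b(w)|\,dw$ is small, since on $\{|y-w|\ge\varepsilon\}$ the kernel $|y-w|^{\alpha-1-d}$ is bounded by $\varepsilon^{\alpha-1-d}$ and $b\in L^1_{loc}$ (indeed $b\in\mathcal K_d^{\alpha-1}$ implies $b\in L^1(D)$).

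I expect the only real point requiring care — the ``main obstacle'' — is the case analysis in (\ref{eq:uikey}) verifying that the $\delta_D(w)^{\alpha/2}$ in the numerator and the $\delta_D(w)\wedge|y-w|$ in the denominator conspire so that the resulting kernel is dominated by $|y-w|^{\alpha-1-d}$ \emph{uniformly in $y$}, in particular that the factor $\delta_D(y)$ (which can be small when $y$ is near $\partial D$) never hurts us. On the diagonal regime this is because $|y-w|\le c\,\delta_D(w)$ (up to the constant coming from $\delta_D(y)\le\diam D$), and off-diagonal it is because $|y-w|$ is then bounded below by a positive multiple of $\delta_D(w)$ as well; writing it out carefully in both regimes gives the claimed uniform domination, after which the Kato-class property finishes the proof. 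Everything else is a routine combination of the covering argument and the dominated/bounded convergence already invoked in the cited lemmas of \cite{KBTJ}.
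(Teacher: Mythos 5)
There is a genuine gap: the pointwise domination on which your whole argument rests, namely $G_D(y,w)/[\delta_D(w)\wedge|y-w|]\le c\,|y-w|^{\alpha-1-d}$ uniformly in $y,w\in D$, is false. Your ``diagonal'' case is fine: if $|y-w|^{\alpha}\le\delta_D(y)^{\alpha/2}\delta_D(w)^{\alpha/2}$, then by $\delta_D(y)\le\delta_D(w)+|y-w|$ one gets $|y-w|\le 2\delta_D(w)$, so the minimum in the denominator is comparable to $|y-w|$ and the bound follows; likewise the off-diagonal case with $\delta_D(w)\ge|y-w|$ is harmless. But in the off-diagonal regime with $\delta_D(w)<|y-w|$, the estimate (\ref{eq:Green}) gives
$$
\frac{G_D(y,w)}{\delta_D(w)\wedge|y-w|}\le c\,\frac{\delta_D(y)^{\alpha/2}\,\delta_D(w)^{\alpha/2-1}}{|y-w|^{d}},
$$
and since $\alpha/2-1<0$ the inequality $\delta_D(w)<|y-w|$ works \emph{against} you: $\delta_D(w)^{\alpha/2-1}$ is larger, not smaller, than $|y-w|^{\alpha/2-1}$, and the factor $\delta_D(y)^{\alpha/2}$ does not help when $\delta_D(y)$ is of order $\diam(D)$. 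Concretely, fix $y$ in the interior and let $w\to\partial D$: then $G_D(y,w)\approx\delta_D(w)^{\alpha/2}$, so the left-hand side is of order $\delta_D(w)^{\alpha/2-1}\to\infty$, while $|y-w|^{\alpha-1-d}$ stays bounded. Thus the kernel has a boundary singularity in $w$ of order $\delta_D(w)^{\alpha/2-1}$, which is not of the point-singularity type covered by the Kato condition as you apply it, and your reduction to the kernel $|y-w|^{\alpha-1-d}$ collapses exactly in the regime that carries the real difficulty.

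The lemma is nevertheless true, but the missing ingredient is a separate treatment of the boundary-layer kernel $\delta_D(w)^{\alpha/2-1}|y-w|^{\alpha/2-d}$ (obtained after using $\delta_D(y)\le\delta_D(w)+|y-w|$): one covers the dyadic strips $\{2^{-k-1}<\delta_D(w)\le 2^{-k}\}$ (intersected with annuli in $|y-w|$) by roughly $2^{k(d-1)}$ balls of radius $2^{-k}$, uses $\int_{B(z,2^{-k})}|b(w)|\,dw\le (2^{-k})^{d+1-\alpha}K_{2^{-k}}$ with $K_r\to0$ as $r\downarrow0$, and sums the resulting series, where $1<\alpha<2$ is what makes it converge; this yields the uniform integrability, not just finiteness. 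This is exactly the type of argument the paper carries out in detail in Lemma \ref{lem:uniforminteg}, and it is how the cited source \cite{KBTJ} establishes the present statement; note that the paper itself does not reprove this lemma but quotes it from \cite{KBTJ}. Your covering argument with the kernel $|y-w|^{\alpha-1-d}$ disposes only of the easy part; the boundary-layer part, which is the actual content of the proof, is absent.
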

\noindent In the next lemma we will show a similar property for the family of functions $G_{D_{2^{-n}}}(x,w) M_D(w,Q)\delta_{D_{2^{-n}}}(w)^{-1}$.
\begin{lem}\label{lem:uniforminteg}
Let $x\in D$ be fixed. There exists $N=N(D,x)\in\NN$ such that the functions
$$
G_{D_{2^{-n}}}(x,w)M_D(w,Q)\delta_{D_{2^{-n}}}(w)^{-1}
$$
are uniformly in $Q\in\partial D$ and $n>N$ integrable against $|b(w)|dw$.
\end{lem}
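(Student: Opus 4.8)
The plan is to reduce the claim to Lemma~\ref{lem:UnifInt} by comparing the new family of functions to the one already known to be uniformly integrable. The key point is that on $D_{2^{-n}}$, for $n$ large enough that $x\in D_{2^{-n}}$ (which fixes the threshold $N=N(D,x)$), the distance functions $\delta_{D_{2^{-n}}}$ and $\delta_D$ are comparable on the relevant region, and the Green functions satisfy $G_{D_{2^{-n}}}(x,w)\le G_D(x,w)$ by domain monotonicity. The remaining factor $M_D(w,Q)$ is controlled uniformly in $Q$ using the Martin kernel estimate~(\ref{eq:Martin}), namely $M_D(w,Q)\approx \delta_D(w)^{\alpha/2}/|w-Q|^d$, which is bounded on compact subsets of $D$ away from the boundary but blows up near $\partial D$. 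So the real work is near the boundary.

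First I would split the integral $\int_D G_{D_{2^{-n}}}(x,w)M_D(w,Q)\delta_{D_{2^{-n}}}(w)^{-1}|b(w)|\,dw$ according to whether $w$ is in a small fixed neighborhood $V$ of $\partial D$ or in the complementary compact core $D\setminus V$. On the core, $\delta_D(w)$ and $|w-Q|$ are bounded below uniformly in $Q$, so $M_D(w,Q)$ is bounded by a constant and $\delta_{D_{2^{-n}}}(w)^{-1}$ is bounded; hence the integral over $D\setminus V$ is dominated by a constant times $\int_{D\setminus V}G_D(x,w)|b(w)|\,dw$, which is finite and, being independent of $Q$ and $n$, trivially uniformly small on tails. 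Near the boundary, on $V$, I would use~(\ref{eq:Green}) for $G_D(x,w)$: since $x$ is fixed and far from $\partial D$ for $w\in V$, we get $G_D(x,w)\approx \delta_D(w)^{\alpha/2}$ (up to $w$-independent constants depending on $x$), so the integrand is bounded by $C\,\delta_D(w)^{\alpha/2}\cdot \delta_D(w)^{\alpha/2}|w-Q|^{-d}\cdot \delta_D(w)^{-1}|b(w)|= C\,\delta_D(w)^{\alpha-1}|w-Q|^{-d}|b(w)|$. At this point I would recognize the bound $\delta_D(w)^{\alpha-1}|w-Q|^{-d}\le C\,|w-Q|^{\alpha-1-d}$ (since $\delta_D(w)\le|w-Q|$), which is exactly the kernel appearing in the definition of the Kato class ${\mathcal K}_d^{\alpha-1}$; uniform integrability of $|w-Q|^{\alpha-1-d}$ against $|b(w)|\,dw$, uniformly in $Q\in\partial D$, is precisely the Kato condition (applied at the point $Q$, which may be approached from inside by points where the Kato supremum is taken).

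Alternatively, and perhaps more cleanly, I would compare directly to Lemma~\ref{lem:UnifInt} rather than re-deriving the estimate: using $G_{D_{2^{-n}}}(x,w)\le G_D(x,w)$, the Martin estimate~(\ref{eq:Martin}), and $\delta_{D_{2^{-n}}}(w)\ge c\,\delta_D(w)$ on $V$ (valid for $n$ large, again by~\cite[Lemma 5]{MR} and the continuity of the localization radius), one bounds the new integrand by $C\,G_D(x,w)\,\delta_D(w)^{\alpha/2-1}|w-Q|^{-d}|b(w)|$; then, invoking~(\ref{eq:Green}) to write $\delta_D(w)^{\alpha/2}|w-Q|^{-d}\le C\,\delta_D(w)^{-\alpha/2}\,G_D(w,Q')$ for a suitable fixed interior point $Q'$ near $Q$, or more simply using the Harnack chain / boundary Harnack comparison $M_D(w,Q)\le C\,G_D(w,w_Q)/G_D(x_0,w_Q)$ with $w_Q$ a fixed corkscrew point for $Q$, reduces everything to the quantity $G_D(x,w)\,G_D(w,w_Q)\,[\delta_D(w)\land|w-w_Q|]^{-1}$ type expressions already handled. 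The main obstacle, and the step I expect to require the most care, is exactly this uniformity in $Q\in\partial D$ of the near-boundary estimate: one must ensure the constants from the Martin kernel bound and from the switch $\delta_{D_{2^{-n}}}\approx\delta_D$ do not degenerate as $Q$ ranges over $\partial D$ and as $n\to\infty$ simultaneously — this is where the $\mathcal C^{1,1}$ regularity, the uniform localization radius, and the uniform Green estimates~(\ref{eq:GreenUnif}) from Section~\ref{sec:prepa1} are essential. Once that uniformity is in place, uniform integrability transfers verbatim from Lemma~\ref{lem:UnifInt}, since the dominating function is a fixed multiple of the one there, independent of both $Q$ and $n>N$.
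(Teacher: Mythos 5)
There is a genuine gap, and it sits exactly where the real difficulty of this lemma lies. Both of your reductions rest on replacing $\delta_{D_{2^{-n}}}(w)^{-1}$ by a constant times $\delta_D(w)^{-1}$ (in the first variant you simply write $\delta_D(w)^{-1}$ in the near-boundary bound; in the second you assert $\delta_{D_{2^{-n}}}(w)\ge c\,\delta_D(w)$ on $V$ for $n$ large, citing \cite[Lemma 5]{MR}). No such comparison holds uniformly in $n$: for $w$ in the thin layer just inside $\partial D_{2^{-n}}$, i.e.\ $2^{-n}<\delta_D(w)<2\cdot 2^{-n}$, one has $\delta_{D_{2^{-n}}}(w)=\delta_D(w)-2^{-n}$, which can be arbitrarily small while $\delta_D(w)\approx 2^{-n}$, so the ratio $\delta_{D_{2^{-n}}}(w)/\delta_D(w)$ tends to $0$ there. (\cite[Lemma 5]{MR} only gives that $D_r$ is again $\mathcal C^{1,1}$ with controlled localization radius; it says nothing of this kind.) Once that step fails, your dominating function $C\,\delta_D(w)^{\alpha-1}|w-Q|^{-d}|b(w)|\le C|w-Q|^{\alpha-1-d}|b(w)|$ does not majorize the integrand, and the reduction to the Kato condition (or to Lemma~\ref{lem:UnifInt}) collapses. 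A useful sanity check: if your comparison were true, the whole lemma would follow in two lines from $b\in\mathcal K_d^{\alpha-1}$, which should be suspicious given that the statement is specifically about the shrunken domains. Note also that bounding $G_{D_{2^{-n}}}(x,w)$ by $G_D(x,w)$ via domain monotonicity discards the factor $\delta_{D_{2^{-n}}}(w)^{\alpha/2}$ from the uniform estimate (\ref{eq:GreenUnif}), which is in fact needed to partially offset the singular factor $\delta_{D_{2^{-n}}}(w)^{-1}$.

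The paper's proof keeps that factor and confronts the layer near $\partial D_{2^{-n}}$ head on: writing $\delta_D(w)=\delta_{D_{2^{-n}}}(w)+2^{-n}$ there, the integrand splits into a genuinely Kato-type piece $\delta_{D_{2^{-n}}}(w)^{\alpha-1}|w-Q|^{-d}\le|w-Q|^{\alpha-1-d}$ (this part of your argument is fine) plus the term $H_n(w,Q)=2^{-n\alpha/2}\,\delta_{D_{2^{-n}}}(w)^{\alpha/2-1}|w-Q|^{-d}$, which admits no pointwise bound by a Kato kernel. Controlling $H_n$ uniformly in $Q$ and $n$ is the substance of the proof: the sets where $\delta_{D_{2^{-n}}}$ is small and $|w-Q|$ is dyadically prescribed are covered by balls of comparable radius, and the contribution is summed using the decay of $K_r=\sup_z\int_{B(z,r)}|b(w)||z-w|^{\alpha-1-d}\,dw$ as $r\downarrow 0$, showing that the integral of $H_n|b|$ over the region where $H_n$ is large is $O(K_{2^{-R}})$ uniformly in $Q$ and $n$. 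Your proposal contains no substitute for this step, so the uniform integrability claim is not established by it.
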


\begin{proof}
In view of the properties of $D$ and of the estimates of $G_{D_{2^{-n}}}(x,w)$ and $M_D(w,Q)$, we can choose $N=N(D,x)\in\NN$ sufficiently large, such that for all $n>N$ we have
$$
\frac{G_{D_{2^{-n}}}(x,w)M_D(w,Q)}{\delta_{D_{2^{-n}}}(w)}\leq 
\frac{c{\bf 1}_{D_{2^{-n}}}(w)\delta_{D}(w)^{\alpha/2}}{\delta_{D_{2^{-n}}}(w)^{1-\alpha/2}|w-Q|^d|w-x|^{d-\alpha}}
$$
$$
\leq \tilde{c}{\bf 1}_{D_{2^{-n}}}(w)\left(|w-x|^{\alpha-d}+\frac{\delta_{D}(w)^{\alpha/2}}{\delta_{D_{2^{-n}}}(w)^{1-\alpha/2}|w-Q|^d}\right),
$$
where $c$ and $\tilde{c}$ depends only on $D,\alpha$ and $x$. The first term in the parentheses is integrable against $|b(w)|dw$ independently of $Q,n$, so we only 
need to consider the second one. For $w\in D_{2^{-N}}$ and $Q\in\partial D$ we have
$$
\frac{\delta_{D}(w)^{\alpha/2}}{\delta_{D_{2^{-n}}}(w)^{1-\alpha/2}|w-Q|^d}\leq \diam(D)^{\alpha/2}2^{(N+1)(d+1-\alpha/2)},
$$
and for $w\in D_{2^{-n}}\setminus D_{2^{-N}}$, $Q\in\partial D$ we get
$$
\frac{\delta_{D}(w)^{\alpha/2}}{\delta_{D_{2^{-n}}}(w)^{1-\alpha/2}|w-Q|^d}=
\frac{(\delta_{D_{2^{-n}}}(w)+2^{-n})^{\alpha/2}}{\delta_{D_{2^{-n}}}(w)^{1-\alpha/2}|w-Q|^d}
$$
$$
\leq 2^{\alpha/2}\left(|w-Q|^{\alpha-d-1}+\frac{2^{-n\alpha/2}}{\delta_{D_{2^{-n}}}(w)^{1-\alpha/2}|w-Q|^d}\right).
$$
Since ${\bf 1}_{D_{2^{-n}}}(w)|w-Q|^{\alpha-d-1}$ is uniformly in $Q,n$ integrable against $|b(w)|dw$, 
we can restrict our attention to the function
$$
H_n(w,Q)=\frac{2^{-n\alpha/2}{\bf 1}_{D_{2^{-n}}}(w)}{\delta_{D_{2^{-n}}}(w)^{1-\alpha/2}|w-Q|^d}
$$
Let $R>N$, $R\in\NN$. For $k,m,n\in\NN$, $k\geq R$, $m\geq N$, $n>N$ we define 
$$
W^n_{k,m}(Q,R)=\left\{w\in D_{2^{-n}}: \frac{1}{2^{k+1}}<\delta_{D_{2^{-n}}}(w)\leq \frac{1}{2^k},\quad \frac{1}{2^{m+1}}<|w-Q|\leq \frac{1}{2^{m}}\right\},
$$
$$
W^n_k(Q,R)=\left\{w\in D_{2^{-n}}: \frac{1}{2^{k+1}}<\delta_{D_{2^{-n}}}(w)\leq \frac{1}{2^k},\quad |w-Q|>\frac{1}{2^N}\right\}.
$$
We note that $W^n_{k,m}(Q,R)=\emptyset$ for $k<m$ or $m\geq n$. $W^n_{k,m}(Q,R)$ can be covered by $c_1(2^{k-m})^{d-1}$ balls of radii $2^{-k}$, where 
$c_1=c_1(D)$. For $r>0$ denote
$$
K_r=\sup_{z\in\RR^d}\int_{B(z,r)}|b(w)||z-w|^{\alpha-d-1}dw.
$$
Then $K_r\to0$ as $r\downarrow0$. We have
$$
\int_{W^n_{k,m}(Q,R)}H_n(w,Q)|b(w)|dw
$$
$$
\leq (2^{k+1})^{1-\alpha/2}(2^{m+1})^d2^{-n\alpha/2}c_1(2^{k-m})^{d-1}\sup_{z\in D}\int_{B(z,2^{-k})}|b(w)|dw
$$
$$
\leq (2^{k+1})^{1-\alpha/2}(2^{m+1})^d2^{-n\alpha/2}c_1(2^{k-m})^{d-1}(2^k)^{\alpha-d-1}K_{2^{-k}}
$$
$$
\leq c_2K_{2^{-R}}(2^{k})^{\alpha/2-1}2^{m}2^{-n\alpha/2},
$$
where $c_2=c_2(D,b,\alpha)$. Furthermore, $W^n_{k}(Q,R)$ can be covered by $c_3(2^{k})^{d-1}$ balls of radii $2^{-k}$, where $c_3=c_3(D)$, and thus
$$
\int_{W^n_{k}(Q,R)}H_n(w,Q)|b(w)|dw
$$
$$
\leq (2^{k+1})^{1-\alpha/2}2^{Nd}2^{-n\alpha/2}c_3(2^{k})^{d-1}\sup_{z\in D}\int_{B(z,2^{-k})}|b(w)|dw
$$
$$
\leq (2^{k+1})^{1-\alpha/2}2^{Nd}2^{-n\alpha/2}c_3(2^{k})^{d-1}(2^k)^{\alpha-d-1}K_{2^{-k}}
$$
$$
\leq c_4K_{2^{-R}}(2^{k})^{\alpha/2-1}2^{-n\alpha/2},
$$
where $c_4=c_4(D,b,\alpha)$. Let $A^n_R=\left\{w\in D_{2^{-n}}: \delta_{D_{2^{-n}}}(w)\leq 2^{-R}\right\}$. Then 
$$
A^n_R=\sum^{\infty}_{k=R}W^n_{k}(Q,R)+\sum^{n-1}_{m=N}\sum^{\infty}_{k=R\vee m}W^n_{k,m}(Q,R),
$$
and we obtain
$$
\int_{A^n_R}H_n(w,Q)|b(w)|dw
$$
$$
\leq c_4K_{2^{-R}}2^{-n\alpha/2}\sum^{\infty}_{k=R}(2^{k})^{\alpha/2-1}+
\sum^{n-1}_{m=N}\sum^{\infty}_{k=R\vee m}c_2K_{2^{-R}}(2^{k})^{\alpha/2-1}2^{m}2^{-n\alpha/2}
$$
$$
\leq c_5K_{2^{-R}}\left(2^{-n\alpha/2}(2^{R})^{\alpha/2-1}+\sum^{n-1}_{m=N}(2^{n-m})^{-\alpha/2}\right)\leq c_6K_{2^{-R}},
$$
where $c_6=c_6(D,b,\alpha)$. For $w\in D_{2^{-n}}\setminus A^n_R$ we have
$$
H_n(w,Q)<2^{-n\alpha/2}(2^R)^{1-\alpha/2}\left(\frac{1}{2^R}+\frac{1}{2^n}\right)^{-d}<4^{dR},
$$
so $B^n_R(Q):=\left\{w:H_n(w,Q)> 4^{dR}\right\}\subset A^n_R$ for all $Q\in\partial D$ and $n>N$. Therefore
$$
\lim_{R\to\infty}\sup_{Q\in\partial D,n>N}\int_{B^n_R(Q)}H_n(w,Q)|b(w)|dw\leq\lim_{R\to\infty}c_6K_{2^{-R}}=0.
$$
\end{proof}

 %%%%%%%%%%%%%%%%%%%%%%%%%%%%%%%%%%%%%%%%%%%%%%%%%%%%%%%%%%%%%%%%%%%%%%%%
 %%%%%%%%%%%%%%%%%%%%%%%%%%%%%%%%%%%%%%%%%%%%%%%%%%%%%%%%%%%%%%%%%%%%%%%%%
 \section{Martin kernel and Martin representation}\label{sec:Martin}
 
 In this section we will discuss first the existence and the properties of the
Martin kernel of $L$ for a $\mathcal C^{1,1}$ bounded open set $D$. Next we will investigate the Martin representation 
for non-negative singular $L$-harmonic functions on $D$.

\subsection{Existence and Perturbation formula for the  $L$-Martin kernel}

In order to prove the existence of the $L$-Martin kernel, we will need the
following property of the Green function for $\Delta^{\alpha/2}$.

\begin{lem}\label{lem:ratioGreen}
For all $x\in D$ and $Q\in\partial D$ we have
$$
\lim_{y\to Q}\frac{\nabla_xG_D(x,y)}{G_D(x_0,y)}=\nabla_x M_D(x,Q).
$$
\end{lem}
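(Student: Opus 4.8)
The plan is to prove the convergence by combining three ingredients: the gradient estimate for $\alpha$-harmonic functions (Lemma~\ref{GradEstim}), the known pointwise convergence $G_D(x,y)/G_D(x_0,y)\to M_D(x,Q)$ as $y\to Q$, and a normal-families / equicontinuity argument that upgrades pointwise convergence of the $\alpha$-harmonic functions $y\mapsto G_D(\cdot,y)/G_D(x_0,y)$ to locally uniform convergence together with convergence of the first derivatives. Concretely, fix $x\in D$ and a small ball $B=B(x,\rho)$ with $\overline{B}\subset D$. For $y$ close to $Q$ (so in particular $y\notin\overline B$), the function $h_y(\cdot):=G_D(\cdot,y)/G_D(x_0,y)$ is non-negative and $\alpha$-harmonic in $B$; by \eqref{eq:5} applied on $B$ we get $|\nabla h_y(z)|\le d\,h_y(z)/\delta_B(z)$ for $z\in B$, and by the Harnack inequality for non-negative $\alpha$-harmonic functions the values $h_y(z)$ for $z$ in the slightly smaller ball $B(x,\rho/2)$ are all comparable to $h_y(x)$, which in turn converges to $M_D(x,Q)<\infty$. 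Hence the family $\{h_y\}$ is uniformly bounded and has uniformly bounded gradients on $B(x,\rho/2)$, so it is equicontinuous there.

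Next I would identify the limit. Since $h_y(z)\to M_D(z,Q)$ pointwise for every $z$ (this is the definition of the Martin kernel, and the convergence holds for all $z\in D$), equicontinuity on $B(x,\rho/2)$ promotes this to uniform convergence on a neighborhood of $x$. To pass to the derivatives, I would use the Poisson-kernel representation \eqref{eq:gradharmonic}: taking $B'=B(x,\rho/2)$, for any $\alpha$-harmonic $f$ in $D$ one has $\partial_{x_i} f(x)=\int_{(B')^c}\partial_{x_i}P_{B'}(x,z)f(z)\,dz$. Apply this with $f=h_y$ and with $f=M_D(\cdot,Q)$ (which is $\alpha$-harmonic in $D$ by \eqref{eq:Martin}-type regularity, or one may instead apply \eqref{eq:gradharmonic} on $B'$ noting $M_D(\cdot,Q)$ is $\alpha$-harmonic in $D\supset \overline{B'}$). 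The difference $\partial_{x_i}h_y(x)-\partial_{x_i}M_D(x,Q)$ is then $\int_{(B')^c}\partial_{x_i}P_{B'}(x,z)\,[h_y(z)-M_D(z,Q)]\,dz$; since $h_y\to M_D(\cdot,Q)$ boundedly on $\RR^d$ (uniformly on compacts by the equicontinuity argument, and dominated by Harnack-type bounds on the far field — one must check an integrable majorant for $h_y(z)$ against the finite measure $|\nabla_x P_{B'}(x,z)|\,dz$), dominated convergence gives the result. Equivalently, and perhaps cleaner, just invoke Lemma~\ref{GradEstim} directly for $h_y-$something, but the Poisson representation is the natural device since \eqref{eq:gradharmonic} already appears.

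The main obstacle I anticipate is the domination needed for the dominated convergence step: one needs $h_y(z)=G_D(z,y)/G_D(x_0,y)$ to be bounded for $z$ away from $B'$, uniformly in $y$ near $Q$, by a function integrable against $|\nabla_x P_{B'}(x,z)|\,dz$. The Green function estimate \eqref{eq:Green} controls $G_D(z,y)$ for $z$ ranging over $D$, but one must handle $z$ near $y\approx Q$, where $G_D(z,y)$ blows up; this is exactly where the Martin-kernel estimate \eqref{eq:Martin} (via the boundary Harnack principle, which forces $h_y(z)$ to stay comparable to $M_D(z,Q)\approx\delta_D(z)^{\alpha/2}|z-Q|^{-d}$ uniformly for $y$ close to $Q$) saves the day, since $|z-Q|^{-d}$ is integrable near $Q$ in $d$ dimensions only after the $\delta_D(z)^{\alpha/2}$ gain is used against the kernel. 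I would therefore state a uniform boundary Harnack estimate $h_y(z)\le C\,M_D(z,Q)$ for $z\in D\setminus B'$ and $y$ in a small neighborhood of $Q$ as a preliminary remark, then close the argument by dominated convergence as above. The remaining verifications (equicontinuity constants, the interchange of $\nabla_x$ and the integral in \eqref{eq:gradharmonic}, measurability) are routine.
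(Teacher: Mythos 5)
Your overall skeleton is the same as the paper's: represent the gradient through the ball Poisson kernel via \eqref{eq:gradharmonic}, pass to the limit $y\to Q$ under the integral sign, and identify the limit as $\nabla_x M_D(x,Q)$ by applying \eqref{eq:gradharmonic} to the $\alpha$-harmonic function $M_D(\cdot,Q)$. The equicontinuity/normal-families preamble is harmless but unnecessary (pointwise convergence of $h_y(z)=G_D(z,y)/G_D(x_0,y)$ at every $z$ is already the definition of $M_D$, and it is not what the dominated-convergence step needs).

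The genuine gap is in the step you yourself flag as the crux: the asserted uniform bound $h_y(z)\le C\,M_D(z,Q)$ for all $z\in D\setminus B'$ and all $y$ near $Q$ is false. The function $h_y(\cdot)$ has a pole at the interior point $z=y$: as $z\to y$ (with $y$ close to, but distinct from, $Q$) one has $G_D(z,y)\to\infty$ while $M_D(\cdot,Q)$ stays locally bounded near $y$; the boundary Harnack principle cannot ``save the day'' there, since $G_D(\cdot,y)$ is not $\alpha$-harmonic in any neighborhood of $y$, so no comparison with $M_D(\cdot,Q)$ holds near the moving singularity. Thus your dominating function does not exist as stated, and the dominated-convergence step collapses exactly at the difficult point of the lemma. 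The repair (which is what the paper does) is to give up on a single fixed majorant: bound the denominator by $G_D(x_0,y)\ge c\,\delta_D(y)^{\alpha/2}$ for $y$ near $Q$, and use the consequence of \eqref{eq:Green} (splitting the cases $\delta_D(w)\le|w-y|$ and $\delta_D(w)>|w-y|$) that $G_D(w,y)\le C\,\delta_D(y)^{\alpha/2}\,|w-y|^{\alpha/2-d}$, so that $h_y(w)\le C\,|w-y|^{\alpha/2-d}$. This family of majorants depends on $y$, but it is uniformly integrable against $|\nabla_x P_{B}(x,w)|\,dw$ (the singularity is integrable of a fixed order, only its location moves), so the passage to the limit is justified by the Vitali convergence theorem rather than by a fixed dominating function. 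With that substitution your argument closes; without it, the proof as written is not valid.
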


\begin{proof}
Let $z\in D,Q\in\partial D$ and choose $r>0$ such that $\overline{B(z,r)}\subset D$ and $B(z,r)\cap B(Q,r)=\emptyset$. Since $G_D(\cdot,y)$ is $\alpha$-harmonic in $B(z,r)$ for $y\in B(Q,r)\cap D$, by (\ref{eq:gradharmonic}), we have
\begin{align*}
\frac{\nabla_xG_D(x,y)}{G_D(x_0,y)}& = \nabla_x\int_{B(z,r)^c}P_{B(z,r)}(x,w)\frac{G_D(w,y)}{G_D(x_0,y)}dw \\
&=\int_{B(z,r)^c}\nabla_x P_{B(z,r)}(x,w)\frac{G_D(w,y)}{G_D(x_0,y)}dw, \qquad x \in B(z,r).
\end{align*}
Furthermore, by (\ref{eq:gradPoisson}) and (\ref{eq:Green}),
$$
|\nabla_x P_{B(z,r)}(x,w)|\frac{G_D(w,y)}{G_D(x_0,y)}\leq C\frac{P_{B(z,r)}(x,w)}{r-|x-z|}\frac{G_D(w,y)}{\delta_D(y)^{\alpha/2}}.
$$
We now use the estimate \cite[(25)]{KBTJ} and by considering the cases $\delta_D(w)>|w-y|$ and $\delta_D(w)\leq|w-y|$
we get $\frac{G_D(w,y)}{\delta_D(y)^{\alpha/2}}\leq C |w-y|^{\alpha/2-d}$. Hence 
the last term is uniformly in $y\in B(Q,r/2)\cap D$ integrable against $dw$, and thus
$$
\lim_{y\to Q}\int_{B(z,r)^c}\nabla_x P_{B(z,r)}(x,w)\frac{G_D(w,y)}{G_D(x_0,y)}dw
$$
$$
=\int_{B(z,r)^c}\nabla_x P_{B(z,r)}(x,w)M_D(w,Q)dw=\nabla_xM_D(x,Q).
$$
The last equality follows from (\ref{eq:gradharmonic}) and the $\alpha$-harmonicity of the Martin kernel.
\end{proof}

\noindent Thanks Lemma~\ref{lem:ratioGreen} we obtain the main result of this subsection.

 \begin{thm}\label{th:MartinPerturb}
 Let $x\in D$ and $Q\in\partial D$.
   Let $M_D(x,Q)$ be the Martin kernel of $\Delta^{\alpha/2}$ for $D$.
   Denote  
   $$
   l_D(x,Q)=M_D(x,Q)+\int_D \tilde G_D(x,z) b(z)\cdot \nabla_z M_D(z,Q) dz
   $$
The function $l_D(x,Q)$ is well defined for  $x\in D$ and $Q\in\partial D$ and $l_D(x,Q)>0$.
Moreover the following limit exists and equals:
$$
\lim_{y\ra Q} \frac{\tilde G_D(x,y)}{\tilde G_D(x_0,y)}=\frac{l_D(x,Q)}{l_D(x_0,Q)}.
$$
Thus the Martin kernel   of $ L=\Delta^{\alpha/2} +b\cdot \nabla$  for $D$  exists and equals
\begin{equation}\label{eq:PerturbMartin}
\tilde M_D(x,Q) =\frac{1}{l_D(x_0,Q)}\left[ M_D(x,Q)+\int_D \tilde G_D(x,z) b(z)\cdot \nabla_z M_D(z,Q) dz\right].
\end{equation}
 \end{thm}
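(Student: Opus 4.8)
The plan is to start from the perturbation formula for Green functions (\ref{eq:pertG}), divide by $\tilde G_D(x_0,y)$, and pass to the limit $y\to Q$ term by term. Write
$$
\frac{\tilde G_D(x,y)}{\tilde G_D(x_0,y)}=\frac{G_D(x,y)}{\tilde G_D(x_0,y)}+\int_D \frac{\tilde G_D(x,z)}{\tilde G_D(x_0,y)}\,b(z)\cdot\nabla_z G_D(z,y)\,dz.
$$
By the Comparability Theorem (\ref{eq:GreenCompar}), $\tilde G_D(x_0,y)\approx G_D(x_0,y)$, and by Lemma~\ref{lem:ratioGreen} the quotient $\nabla_z G_D(z,y)/G_D(x_0,y)\to \nabla_z M_D(z,Q)$ pointwise; also $G_D(x,y)/\tilde G_D(x_0,y)\to M_D(x,Q)/[\lim \tilde G_D(x_0,y)/G_D(x_0,y)]$ — but to avoid assuming the $L$-Martin kernel exists I would instead work with the ratio $\tilde G_D(x,y)/G_D(x_0,y)$ throughout, show it converges to $l_D(x,Q)$, and only at the end divide the $x$-limit by the $x_0$-limit. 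So first I would establish $\lim_{y\to Q}\tilde G_D(x,y)/G_D(x_0,y)=l_D(x,Q)$, where $l_D$ is as defined; the existence of $\tilde M_D$ and formula (\ref{eq:PerturbMartin}) then follow by taking the ratio of the $x$- and $x_0$-limits and recognizing $\tilde G_D(x,y)/\tilde G_D(x_0,y)=[\tilde G_D(x,y)/G_D(x_0,y)]\big/[\tilde G_D(x_0,y)/G_D(x_0,y)]$.

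The key analytic step is justifying the interchange of limit and integral in the perturbation term. For this I would produce a dominating function for $\tilde G_D(x,z)\,|b(z)|\,|\nabla_z G_D(z,y)|/G_D(x_0,y)$, uniform in $y$ near $Q$. Using (\ref{eq:GreenCompar}) to replace $\tilde G_D(x,z)$ by $G_D(x,z)$ up to a constant, (\ref{eq:GradGreen}) to bound $|\nabla_z G_D(z,y)|\le d\,G_D(z,y)/(\delta_D(z)\wedge|z-y|)$, and the estimate $G_D(z,y)/G_D(x_0,y)\le G_D(z,y)/\delta_D(y)^{\alpha/2}\cdot c\le c'|z-y|^{\alpha/2-d}$ (as in the proof of Lemma~\ref{lem:ratioGreen}, via \cite[(25)]{KBTJ}), I reduce the integrand to something like $c\,G_D(x,z)|b(z)|/(\delta_D(z)\wedge|z-y|)\cdot|z-y|^{\alpha/2-d}$. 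The part away from $Q$ (say $|z-y|$ bounded below) is handled by Lemma~\ref{lem:UnifInt}; near $Q$ one combines the $\alpha$-harmonicity-type estimates with the Kato-class integrability of $b$, much as in the region-by-region argument already used in Lemma~\ref{lem:uniforminteg}. Dominated convergence then gives
$$
\lim_{y\to Q}\int_D \frac{\tilde G_D(x,z)}{G_D(x_0,y)}\,b(z)\cdot\nabla_z G_D(z,y)\,dz=\int_D \tilde G_D(x,z)\,b(z)\cdot\nabla_z M_D(z,Q)\,dz,
$$
using Lemma~\ref{lem:ratioGreen} for the pointwise convergence of the gradient ratio.

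Next I would check that $l_D(x,Q)$ is well-defined and strictly positive. Finiteness of the integral follows from the same dominating-function estimate (with $G_D(x_0,y)$ replaced by its limit, i.e. directly bounding $|\nabla_z M_D(z,Q)|\le c\,\delta_D(z)^{\alpha/2-1}|z-Q|^{-d}$ via (\ref{eq:Martin}) and Lemma~\ref{GradEstim} applied to $M_D(\cdot,Q)$, which is non-negative and $\alpha$-harmonic in $D$). For positivity, note $l_D(x,Q)=\lim_{y\to Q}\tilde G_D(x,y)/G_D(x_0,y)$, and by (\ref{eq:GreenCompar}) this is $\ge c^{-1}\lim_{y\to Q} G_D(x,y)/G_D(x_0,y)=c^{-1}M_D(x,Q)>0$ for $x\in D$, $Q\in\partial D$. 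Finally, since $l_D(x_0,Q)\ge c^{-1}M_D(x_0,Q)=c^{-1}>0$, the quotient $l_D(x,Q)/l_D(x_0,Q)$ is well-defined, it is the limit of $\tilde G_D(x,y)/\tilde G_D(x_0,y)$, and renaming $\tilde M_D(x,Q):=l_D(x,Q)/l_D(x_0,Q)$ yields exactly (\ref{eq:PerturbMartin}).

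The main obstacle I anticipate is the uniform-in-$y$ domination of the perturbation integrand as $y\to Q$: the singularity of $\nabla_z G_D(z,y)$ at $z=y$ (which approaches $\partial D$) must be controlled against the Kato-class norm of $b$ simultaneously with the boundary behavior of $G_D(x,z)$ near $\partial D$ and of $G_D(z,y)/G_D(x_0,y)$, and this is precisely the kind of delicate region decomposition carried out in Lemma~\ref{lem:uniforminteg}. Once that domination is in hand, everything else is a direct application of dominated convergence together with Lemma~\ref{lem:ratioGreen} and the Comparability Theorem.
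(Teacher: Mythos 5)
Your proposal is correct and follows essentially the same route as the paper: divide the perturbation formula (\ref{eq:pertG}) by $G_D(x_0,y)$, pass to the limit using Lemma~\ref{lem:ratioGreen} together with the comparability and gradient estimates, obtain positivity from $l_D(x,Q)\ge c^{-1}M_D(x,Q)$, and conclude by taking the ratio of the $x$- and $x_0$-limits. The only difference is cosmetic: where you sketch a direct dominating-function/uniform-integrability argument in the spirit of Lemma~\ref{lem:uniforminteg}, the paper simply invokes the uniform integrability already established in \cite[Lemma 11, formula (49)]{KBTJ} (together with the Boundary Harnack Principle giving $G_D(x_0,y)\approx G_D(x,y)$ near $Q$).
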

\begin{proof}
 We divide the perturbation formula (\ref{eq:pertG}) for the Green function $\tilde G_D(x,y)$
 by $G_D(x_0,y)$ and  let $y\ra Q$.
 
 The  exchange of $\lim_{y\ra Q}$ and $\int_D$ is justified by
 Lemma 11 of \cite{KBTJ}, see the formula (49) in its proof.  Note that
 by the Boundary Harnack Principle, $G_D(x_0,y)\approx G_D(x,y)$ when $y\in B(Q,\epsilon_0)$,
 a sufficiently small ball around $Q$.
 We also use the estimates  (\ref{eq:Green}), (\ref{eq:GradGreen}) and   (\ref{eq:GreenCompar}).
 
 %%%%%%%%%%%%%%%%%%%%%%%%%%%%%%%%%%%%%%%%%%%%%%%%%%%%%%%%%%%%%%%%%%%%%%%%%%
  The  exchange of $\lim_{y\ra Q}$ and  $\nabla_z$   is justified by Lemma \ref{lem:ratioGreen}.
 %%%%%%%%%%%%%%%%%%%%%%%%%%%%%%%%%%%%%%%%%%%%%%%%%%%%%%%%%%%%%%%%%%%%%%%%%
 Finally 
 $$
 \lim_{y\ra Q} \frac {\tilde G_D(x,y)}{  G_D(x_0,y)} = M_D(x,Q)+\int_D \tilde G_D(x,z) b(z)\cdot \nabla M_D(z,Q) =l_D(x,Q).
 $$
 The strict positivity of the function $l_D(x,Q)$ follows from (\ref{eq:GreenCompar}), which implies that there exists $ a>0$ such that
 \begin{equation}\label{eq:StrictPos_l}
   l_D(x,Q)  \ge a M_D(x,Q)>0.
 \end{equation}

 Now we consider the quotient
 $$
 \frac{\tilde G_D(x,y)}{\tilde G_D(x_0,y)}= \frac{\tilde G_D(x,y)}{  G_D(x_0,y)} \frac{ G_D(x_0,y)}{\tilde G_D(x_0,y)}
 \ra \frac{l_D(x,Q)}{l_D(x_0,Q)},
 $$
 when $y\ra Q$.
\end{proof}

Directly from the definition of $\tilde M_D(x,Q)$ and (\ref{eq:GreenCompar}) we obtain the following corollary.
\begin{cor}\label{cor:MartinComp}
There is a constant $c$ such that for all $x \in D$ and $Q \in \partial D$,
\begin{equation}\label{eq:MartinComp}
c^{-1}M_{D}(x,Q)\leq \tilde{M}_{D}(x,Q)\leq cM_{D}(x,Q).
\end{equation}
 
\end{cor}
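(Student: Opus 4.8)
The plan is to read off Corollary~\ref{cor:MartinComp} directly from the formula~(\ref{eq:PerturbMartin}) for $\tilde M_D(x,Q)$ together with the strict positivity estimate~(\ref{eq:StrictPos_l}) and the Green function comparability~(\ref{eq:GreenCompar}). By definition and by Theorem~\ref{th:MartinPerturb},
$$
\tilde M_D(x,Q)=\frac{l_D(x,Q)}{l_D(x_0,Q)},
$$
so the whole corollary amounts to showing that $l_D(\cdot,Q)\approx M_D(\cdot,Q)$ uniformly in $Q\in\partial D$ (the denominator $l_D(x_0,Q)$ being a harmless normalizing factor that cancels once we take ratios, or rather is itself comparable to the constant $M_D(x_0,Q)$ which is bounded above and below in $Q$ since $x_0$ is a fixed interior point).

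First I would recall the lower bound: (\ref{eq:StrictPos_l}) gives $l_D(x,Q)\ge a\,M_D(x,Q)$ for some $a>0$ not depending on $x$ or $Q$, which came from dropping the (not necessarily signed) integral term after bounding it using (\ref{eq:GreenCompar}). Next I would establish the matching upper bound $l_D(x,Q)\le A\,M_D(x,Q)$. Here one uses
$$
l_D(x,Q)=M_D(x,Q)+\int_D \tilde G_D(x,z)\,b(z)\cdot\nabla_z M_D(z,Q)\,dz,
$$
estimates the integrand via $\tilde G_D(x,z)\le C\,G_D(x,z)$ from~(\ref{eq:GreenCompar}) and the gradient bound $|\nabla_z M_D(z,Q)|\le d\,M_D(z,Q)/\delta_D(z)$ coming from Lemma~\ref{GradEstim} applied to the $\alpha$-harmonic function $M_D(\cdot,Q)$, and then invokes the uniform integrability already proved (Lemma~\ref{lem:uniforminteg}, or the underlying estimate behind Theorem~\ref{th:MartinPerturb}) to control $\int_D G_D(x,z)\,|b(z)|\,M_D(z,Q)\,\delta_D(z)^{-1}\,dz$ by a constant multiple of $M_D(x,Q)$, uniformly in $x$ and $Q$. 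The comparison $M_D(x,Q)\approx \delta_D(x)^{\alpha/2}|x-Q|^{-d}$ from~(\ref{eq:Martin}), together with the Green function estimate~(\ref{eq:Green}), makes this last bound a routine computation of the same flavour as the one carried out in the proof of Lemma~\ref{lem:uniforminteg}.

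Combining the two bounds, $a\,M_D(x,Q)\le l_D(x,Q)\le A\,M_D(x,Q)$ uniformly, so the quotient $\tilde M_D(x,Q)=l_D(x,Q)/l_D(x_0,Q)$ satisfies $(A/a)^{-1}M_D(x,Q)\le \tilde M_D(x,Q)\cdot\big(l_D(x_0,Q)/M_D(x_0,Q)\big)\le (A/a)\,M_D(x,Q)$, and since $l_D(x_0,Q)/M_D(x_0,Q)\in[a,A]$ this gives~(\ref{eq:MartinComp}) with $c=A/a$. Alternatively, one can bypass $l_D$ entirely: write $\tilde M_D(x,Q)=\lim_{y\to Q}\tilde G_D(x,y)/\tilde G_D(x_0,y)$, apply~(\ref{eq:GreenCompar}) in numerator and denominator to sandwich this between $c^{-2}$ and $c^{2}$ times $G_D(x,y)/G_D(x_0,y)$, and pass to the limit to get $M_D(x,Q)$; the constant is uniform in $x,Q$ because the $C$ in~(\ref{eq:GreenCompar}) is. I expect no real obstacle here — this is a genuine corollary — the only point requiring a line of care is that the constant in~(\ref{eq:GreenCompar}) is independent of $x,y$ (which the Comparability Theorem asserts) so that it survives the limit $y\to Q$ uniformly in $Q$.
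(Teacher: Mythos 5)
Your proposal is correct, but your primary route is heavier than what the paper does: the paper's proof is exactly your ``alternative'' at the end, namely write $\tilde M_D(x,Q)=\lim_{y\to Q}\tilde G_D(x,y)/\tilde G_D(x_0,y)$, sandwich the ratio between $C^{-2}$ and $C^{2}$ times $G_D(x,y)/G_D(x_0,y)$ using (\ref{eq:GreenCompar}), and pass to the limit; this is a one-line consequence of the Comparability Theorem and needs none of the structure of $l_D$. Your main route, proving $a\,M_D(x,Q)\le l_D(x,Q)\le A\,M_D(x,Q)$ uniformly, also works, but the upper bound is not quite covered by the references you give: Lemma~\ref{lem:uniforminteg} concerns the sets $D_{2^{-n}}$ and is uniform in $Q$ and $n$ only for a \emph{fixed} $x$, whereas you need $\int_D G_D(x,z)|b(z)|M_D(z,Q)\delta_D(z)^{-1}dz\le K\,M_D(x,Q)$ uniformly in both $x$ and $Q$. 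The correct input is the 3G-type uniform integrability of $G_D(x,w)G_D(w,y)/[G_D(x,y)\delta_D(w)]$ against $|b(w)|dw$ from \cite[Lemma 11]{KBTJ} (the same fact the paper invokes later in Lemma~\ref{lem_Quotient_lim0}); since $|b|dw$ is a finite measure on $D$, uniform integrability gives a uniform bound, and letting $y\to Q$ yields the bound with $M_D(\cdot,Q)$ in place of $G_D(\cdot,y)$. Two small simplifications to your algebra: $M_D(x_0,Q)\equiv 1$ by the very normalization of the Martin kernel, so the factor $l_D(x_0,Q)/M_D(x_0,Q)$ is just $l_D(x_0,Q)\in[a,A]$ and the conclusion is immediate with $c=A/a$. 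What your longer route buys is a quantitative comparison $l_D(\cdot,Q)\approx M_D(\cdot,Q)$, which is more information than the corollary asserts, but for the statement itself the paper's direct sandwich is the efficient argument.
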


\subsection{Properties of the $L$-Martin kernel}

We will now study further properties of the Martin kernel of $L$ for $D$. We start with the following useful formulas.

\begin{lem}\label{lem:PoissonMartin}
 Consider a ${\mathcal C}^{1,1}$ open set $U\subset\overline{U}\subset D$.\\
 (i)\;({\bf Perturbation formula for the Poisson kernel}) For all $x \in U, z \in (\overline U)^c$
 \begin{equation}\label{eq:pertP}
\tP_U(x,z) = P_U(x,z) + \int_U \tG_U(x,w) b(w)\cdot \nabla  P_U(w,z) dw. 
\end{equation}
(ii) Let $Q \in \partial D$. We have the following expression for the $L$-Poisson integral of the Martin kernel of $\Delta^{\alpha/2}$:
  \begin{align}
  \tilde P_U M_D(x,Q) &:=\int_{U^c} \tilde P_U(x,y) M_D(y,Q)dy \nonumber\\
  &=M_D(x,Q)+\int_U \tilde G_U(x,z) b(z)\cdot \nabla M_D(z,Q) dz,\ \ \ x\in U. \label{eq:TildePM}
   \end{align}
\end{lem}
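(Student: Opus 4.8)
The plan is to prove the two formulas in parallel, since they have the same structure; part (i) is essentially the perturbation formula (\ref{eq:pertG}) localized to $U$ and integrated against a boundary point, while part (ii) is obtained from (i) by integrating in $z$ over $U^c$ against $M_D(\cdot,Q)$. The starting point for (i) is the identity (\ref{eq:pertG}) applied with the set $U$ in place of $D$: for $x\in U$ and $y\in U$, $y\ne x$,
$$
\tG_U(x,y)=G_U(x,y)+\int_U \tG_U(x,w) b(w)\cdot\nabla_w G_U(w,y)\,dw.
$$
Now I would let $y\to z\in(\overline U)^c$. Recall the Ikeda--Watanabe representation of the Poisson kernel: $P_U(x,z)=\mathcal A_{d,\alpha}\int_U G_U(x,y)|y-z|^{-d-\alpha}dy$, and the analogous formula for $\tP_U$, valid because the L\'evy system of $\tilde X$ is that of $X$ (as recalled in the proof of Lemma~\ref{lem:PoissonKer}). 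Multiplying the displayed identity by $\mathcal A_{d,\alpha}|y-z|^{-d-\alpha}$ and integrating in $y\in U$ gives
$$
\tP_U(x,z)=P_U(x,z)+\mathcal A_{d,\alpha}\int_U\!\!\int_U \tG_U(x,w)b(w)\cdot\nabla_w G_U(w,y)\,|y-z|^{-d-\alpha}\,dw\,dy.
$$
To obtain (\ref{eq:pertP}) I must interchange the $w$- and $y$-integrals and pull $\nabla_w$ through the $y$-integral, recognizing $\mathcal A_{d,\alpha}\int_U \nabla_w G_U(w,y)|y-z|^{-d-\alpha}dy=\nabla_w P_U(w,z)$ (the differentiation under the integral sign being justified for $z$ at positive distance from $U$ by (\ref{eq:GradGreen}), which bounds $|\nabla_w G_U(w,y)|$ by $dG_U(w,y)/(\delta_U(w)\wedge|w-y|)$, integrable against $|y-z|^{-d-\alpha}dy$ locally uniformly in $w$). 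The Fubini step itself is controlled by Lemma~\ref{lem:UnifInt}: $G_U(w,y)/(\delta_U(w)\wedge|w-y|)$ is integrable against $|b(w)|dw$ uniformly in $y$, and $\tG_U(x,w)\le cG_D(x,w)$ is bounded near the diagonal away from $x$; combining with the $|y-z|^{-d-\alpha}$ factor and $z\notin\overline U$ gives absolute integrability of the double integral. This yields (i).

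For part (ii) I would integrate (\ref{eq:pertP}) against $M_D(z,Q)\,dz$ over $z\in U^c$. Note that $M_D(z,Q)$ is a bounded function of $z\in U^c$ for fixed $Q\in\partial D$ and fixed $U$ with $\overline U\subset D$, by the estimate (\ref{eq:Martin}) (since $|z-Q|$ is bounded below by $\dist(U^c\cap D,\partial D)>0$ on $U^c\cap D$ — more precisely $\delta_D(z)$ is bounded and $|z-Q|$ is bounded below for $z$ in the relevant region — and $M_D(z,Q)=0$ for $z\notin\overline D$). On the left side, $\int_{U^c}\tP_U(x,z)M_D(z,Q)\,dz=\tP_U M_D(x,Q)$ by definition. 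On the right side, the first term integrates to $\int_{U^c}P_U(x,z)M_D(z,Q)\,dz$, which equals $M_D(x,Q)$: indeed $M_D(\cdot,Q)$ is $\alpha$-harmonic in $D$, hence regular $\alpha$-harmonic in $U$ (as $\overline U\subset D$ and $M_D(\cdot,Q)$ is nonnegative), so it is reproduced by its $\alpha$-harmonic measure $P_U(x,\cdot)$, giving $P_U M_D(x,Q)=M_D(x,Q)$. For the second term I interchange the $z$-integral with the $w$-integral and with $\nabla_w$:
$$
\int_{U^c}\!\Big(\int_U \tG_U(x,w)b(w)\cdot\nabla_w P_U(w,z)\,dw\Big)M_D(z,Q)\,dz
=\int_U \tG_U(x,w)b(w)\cdot\nabla_w\Big(\int_{U^c}P_U(w,z)M_D(z,Q)\,dz\Big)dw,
$$
and the inner integral is again $M_D(w,Q)$ by the reproducing property, so this equals $\int_U \tG_U(x,w)b(w)\cdot\nabla_w M_D(w,Q)\,dw$, which is exactly the claimed expression (and it is finite by the bound (\ref{eq:StrictPos_l})-type estimates, i.e. the same integrability that made $l_D(x,Q)$ well defined in Theorem~\ref{th:MartinPerturb}).

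The main obstacle is the rigorous justification of the two interchanges of integration and of differentiation-under-the-integral, carried out uniformly enough to apply Fubini and dominated convergence. The differentiation $\nabla_w\int_{U^c}P_U(w,z)M_D(z,Q)\,dz=\int_{U^c}\nabla_w P_U(w,z)M_D(z,Q)\,dz$ needs a dominating function for $\nabla_w P_U(w,z)$ locally uniformly in $w$ on compacts of $U$; here I would invoke Lemma~\ref{GradEstimPoisson}, $|\nabla_w P_U(w,z)|\le(\alpha+d)P_U(w,z)/\delta_U(w)$, together with $\int_{U^c}P_U(w,z)M_D(z,Q)\,dz=M_D(w,Q)<\infty$. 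The Fubini step for the double integral over $U\times U^c$ of $\tG_U(x,w)|b(w)|\,|\nabla_w P_U(w,z)|\,M_D(z,Q)$ then reduces, via the same gradient bound, to the finiteness of $\int_U \tG_U(x,w)|b(w)|\,\delta_U(w)^{-1}M_D(w,Q)\,dw$, which is controlled by Lemma~\ref{lem:uniforminteg} (or directly by the Green-function/Martin-kernel estimates as in the proof of Theorem~\ref{th:MartinPerturb}). Once these measure-theoretic points are in place, both identities follow by the formal manipulations above; I expect the write-up to be short, essentially a matter of citing (\ref{eq:pertG}), the Ikeda--Watanabe formula, the reproducing property of regular $\alpha$-harmonic functions, and the integrability lemmas already established.
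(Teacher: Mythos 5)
Your proposal follows essentially the same route as the paper: for (i) you combine the Ikeda--Watanabe formula for $\tP_U$ and $P_U$ with the perturbation formula (\ref{eq:pertG}) applied to $U$, interchanging integrals and the gradient via (\ref{eq:GradGreen}), (\ref{eq:GreenCompar}) and Lemma~\ref{lem:UnifInt}; for (ii) you integrate (\ref{eq:pertP}) against $M_D(\cdot,Q)$ over $U^c$, use the reproducing property of $P_U$ for the $\alpha$-harmonic function $M_D(\cdot,Q)$, and justify the exchanges with (\ref{eq:6}), (\ref{eq:Martin}) and the integrability lemmas, exactly as in the paper's proof. The argument is correct and the justifications invoked are the same ones the authors cite.
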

\begin{proof}
In the following we apply the Ikeda-Watanabe formula for the Poisson kernels $\tilde P_U$ and $ P_U$.
  By (\ref{eq:pertG}) and Fubini's theorem, for any $x \in U$ and $z \in U^c$,
\begin{align*}
\tP_U(x,z) &= \int_U \mathcal{A}_{d,\alpha}\frac{\tG_U(x,y)}{|z-y|^{d+\alpha}}dy  \notag\\
& = \int_U \frac{\mathcal{A}_{d,\alpha}}{|z-y|^{d+\alpha}} \left[G_U(x,y) + \int_U \tG_U(x,w) b(w)\cdot \nabla  G_U(w,y) dw\right]dy \notag\\
& = P_U(x,z) + \int_U \tG_U(x,w) b(w)\cdot \nabla  P_U(w,z) dw. 
\end{align*}
For the necessary exchanges of order of integration and derivation in the last formula, we apply (\ref{eq:GradGreen}), (\ref{eq:GreenCompar}), 
Lemma~\ref{lem:UnifInt} and bounded convergence theorem.
%%%%%%%%%%%%%%%%%%%%%%%%%%%%%%%%%%%%%%%%%%%%%%%%%%%%%%%%%%%%%%%%%%%%%%%%%%%%%%%%%%%%%%%%%%%%%%%%%%%%%%%%%%
In order to prove (ii), we  use (i) and insert the formula (\ref{eq:pertP}) in $\int_{U^c} \tilde P_U(x,y) M_D(y,Q)dy$. 
  We obtain
 \begin{eqnarray*}
 \int_{U^c} \tilde P_U(x,y) M_D(y,Q)dy
= M_D(x,Q)+\int_U \tilde G_U(x,z) b(z)\cdot \nabla M_D(z,Q) dz
\end{eqnarray*}
 In the last equality the use of Fubini theorem and the exchange of $\int$ and $\nabla$  are justified
 by (\ref{eq:6}), (\ref{eq:Martin}), Lemma \ref{lem:UnifInt} and bounded convergence. 
\end{proof}

\begin{lem}\label{lem:continuity}
The Martin kernel $\tilde M_D(\cdot,\cdot)$ is jointly continuous on $D\times\partial D$.
\end{lem}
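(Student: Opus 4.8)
The plan is to prove joint continuity of $\tilde M_D$ on $D\times\partial D$ by exploiting the perturbation formula (\ref{eq:PerturbMartin}), reducing the problem to continuity of the $\Delta^{\alpha/2}$-Martin kernel $M_D$ (which is known, e.g. from the explicit estimate (\ref{eq:Martin}) together with the known joint continuity in the stable case) and to continuity of the correction integral $\int_D \tilde G_D(x,z)\,b(z)\cdot\nabla_z M_D(z,Q)\,dz$ and of the normalising factor $l_D(x_0,Q)$. Since $\tilde M_D(x,Q)=l_D(x,Q)/l_D(x_0,Q)$, it suffices to show that $(x,Q)\mapsto l_D(x,Q)$ is jointly continuous on $D\times\partial D$ and strictly positive; the positivity is already recorded in (\ref{eq:StrictPos_l}), so the quotient is then continuous as well.

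First I would fix a point $(x_1,Q_1)\in D\times\partial D$ and a small ball $B=B(z_0,r)$ with $\overline B\subset D$, $x_1\in B$, and $B\cap B(Q_1,2r)=\emptyset$, so that on a neighbourhood of $x_1$ the function $M_D(\cdot,Q)$ (and likewise its perturbed analogues) can be represented through the Poisson kernel of $B$ as in (\ref{eq:gradharmonic}). Joint continuity of $M_D$ on $D\times\partial D$ follows since $M_D(x,Q)=\int_{B^c}P_B(x,w)M_D(w,Q)\,dw$, the integrand depends continuously on $(x,Q)$ off the diagonal, and the estimate (\ref{eq:Martin}) provides a dominating function uniformly for $x$ near $x_1$ and $Q$ near $Q_1$; the same reasoning applied to $\nabla_x P_B$, using (\ref{eq:gradPoisson}), gives joint continuity of $\nabla_x M_D(x,Q)$ in a neighbourhood of $(x_1,Q_1)$. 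Next, for the correction term, I would use joint continuity of $z\mapsto \tilde G_D(x,z)$ in $(x,z)$ (away from the diagonal) together with the just-established continuity of $\nabla_z M_D(z,Q)$ in $(z,Q)$, and then invoke the uniform-integrability estimates that already appear in the proof of Theorem~\ref{th:MartinPerturb}: the Comparability Theorem (\ref{eq:GreenCompar}), the Green and Martin estimates (\ref{eq:Green}), (\ref{eq:Martin}), the gradient bound (\ref{eq:GradGreen}), and Lemma~\ref{lem:UnifInt}. These give a $dw$-integrable majorant of $\tilde G_D(x,z)|b(z)||\nabla_z M_D(z,Q)|$ that is uniform for $(x,Q)$ in a neighbourhood of $(x_1,Q_1)$, so the dominated convergence theorem yields continuity of the integral as a function of $(x,Q)$.

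Combining these, $l_D(x,Q)$ is jointly continuous on $D\times\partial D$; applying the same at $x=x_0$ shows $Q\mapsto l_D(x_0,Q)$ is continuous, and by (\ref{eq:StrictPos_l}) it is bounded below by $a\,M_D(x_0,Q)>0$, which is itself bounded below on the compact set $\partial D$ (again by (\ref{eq:Martin})). Hence $\tilde M_D(x,Q)=l_D(x,Q)/l_D(x_0,Q)$ is a ratio of a jointly continuous function and a strictly positive continuous function, so it is jointly continuous on $D\times\partial D$. The main obstacle I anticipate is producing the dominating function for the correction integral that is genuinely \emph{uniform} in $(x,Q)$ near the reference point: near the boundary $z\to\partial D$ one must control $\tilde G_D(x,z)\,\delta_D(z)^{-1}$ against $|b(z)|\,dz$ via Lemma~\ref{lem:UnifInt}, and near $z\to Q$ one must handle the $|z-Q|^{-d}$ singularity coming from $\nabla_z M_D(z,Q)$ — but for $x$ bounded away from $\partial D$ the factor $\tilde G_D(x,z)$ is bounded there, so these two regions can be separated and each handled by the estimates already in hand. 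The remaining care is routine: checking that the neighbourhood of $Q_1$ can be chosen small enough that the relevant singular sets stay away from a fixed $B$.
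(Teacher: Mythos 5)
Your overall route is the same as the paper's: reduce, via the perturbation formula of Theorem~\ref{th:MartinPerturb}, to the joint continuity of $l_D(x,Q)$, i.e.\ of the correction integral $f(x,Q)=\int_D \tilde G_D(x,z)\,b(z)\cdot\nabla_z M_D(z,Q)\,dz$, establish continuity of $\nabla_z M_D(z,\cdot)$ through the Poisson representation over a small ball together with (\ref{eq:gradPoisson}) and (\ref{eq:Martin}), and conclude by a uniform dominating function and dominated convergence, using positivity of $l_D(x_0,\cdot)$ from (\ref{eq:StrictPos_l}). All of this matches the paper's proof.

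However, the one step you flag as the main obstacle is exactly where your sketch, as written, does not close. Near $z\to Q$ you propose to use only that $\tilde G_D(x,z)$ is bounded (for $x$ in a compact subset of $D$) and to ``handle the $|z-Q|^{-d}$ singularity of $\nabla_z M_D(z,Q)$''. This is insufficient: by (\ref{eq:5}) and (\ref{eq:Martin}) the gradient carries the full singularity $|\nabla_z M_D(z,Q)|\le C\,\delta_D(z)^{\alpha/2-1}|z-Q|^{-d}$, and the function $\delta_D(z)^{\alpha/2-1}|z-Q|^{-d}$ (even the milder $|z-Q|^{-d}$) is strictly more singular than the Kato kernel $|z-Q|^{\alpha-1-d}$; one can construct $b\in\mathcal K_d^{\alpha-1}$ concentrated non-tangentially at $Q$ for which $\int |b(z)|\,\delta_D(z)^{\alpha/2-1}|z-Q|^{-d}dz=\infty$, so boundedness of $\tilde G_D$ alone yields no integrable majorant, and Lemma~\ref{lem:UnifInt} does not help in this region because of the unbounded factor $M_D(z,Q)$. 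What saves the argument is the boundary \emph{decay}, not mere boundedness, of the Green function: for $x$ (or $y\in B(x,r)$) in a fixed compact subset of $D$, (\ref{eq:GreenCompar}) and (\ref{eq:Green}) give $\tilde G_D(y,z)\le C|y-z|^{\alpha-d}\delta_D(z)^{\alpha/2}$, so that the two half powers of $\delta_D(z)$ combine to $\delta_D(z)^{\alpha-1}\le |z-Q|^{\alpha-1}$ and one arrives at the majorant $C|y-z|^{\alpha-d}|z-Q|^{\alpha-1-d}$ of (\ref{eq:GreenMartinEst}), which is uniformly in $y$ and $Q$ integrable against $|b(z)|dz$ precisely by the Kato condition. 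With this estimate inserted in place of your ``$\tilde G_D$ is bounded near $Q$'' step (and note that the relevant gradient bound is (\ref{eq:5}) applied to $M_D(\cdot,Q)$, not (\ref{eq:GradGreen})), your argument coincides with the paper's.
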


\begin{proof}
By Theorem~\ref{th:MartinPerturb} and the continuity of $M_D(\cdot,\cdot)$, it suffices to show the joint continuity on $D\times\partial D$ of the function
$$
f(x,Q)=\int_D \tG_D(x,z) b(z)\cdot \nabla M_D(z,Q) dz.
$$
Let $z\in D$. By (\ref{eq:gradharmonic}) and the $\alpha$-harmonicity of $M_D(\cdot,Q)$, for $r>0$ sufficiently small, we have
$$
\nabla M_D(z,Q)=\nabla\int_{B(z,r)^c}P_{B(z,r)}(z,w)M_D(w,Q)dy
$$
$$
=\int_{B(z,r)^c}\nabla P_{B(z,r)}(z,w)M_D(w,Q)dw.
$$
From (\ref{eq:gradPoisson}) and (\ref{eq:Martin}) it follows, that $\nabla P_{B(z,r)}(z,w)M_D(w,Q)$ is uniformly in $Q$ integrable 
against $dw$. This implies that $\nabla M_D(z,\cdot)$ is continuous on $\partial D$ for every $z\in D$. Let now $x\in D$ and choose $r>0$ such that 
$\overline{B(x,r)}\subset D$. By (\ref{eq:GreenCompar}), (\ref{eq:Green}), (\ref{eq:5}) and (\ref{eq:Martin}), for all $y\in B(x,r)$, $z\in D$ and 
$Q\in\partial D$, we have
\begin{equation}\label{eq:GreenMartinEst}
\tG_D(y,z)|\nabla M_D(z,Q)|\leq\frac{C\delta_D(z)^{\alpha-1}}{|y-z|^{d-\alpha}|z-Q|^{d}}
\leq \frac{C}{|y-z|^{d-\alpha}|z-Q|^{d+1-\alpha}}.
\end{equation}
Hence, $\tG_D(y,z)|\nabla M_D(z,Q)|$ is uniformly in $y\in B(x,r)$ and $Q\in\partial D$ integrable against $|b(z)|dz$, which gives the continuity 
of $f(\cdot,\cdot)$.
\end{proof}

\noindent 
We will now use Lemma~\ref{lem:PoissonMartin} to show $L$-harmonicity of $\tilde M(x,Q)$.

 \begin{thm}\label{th:HarmonicityMartin}
 For every $Q\in\partial D$ the Martin kernel $\tilde M(x,Q)$ is a singular $L$-harmonic function of $x$ on $D$. 
 \end{thm}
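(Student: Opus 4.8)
The plan is to verify directly that $\tM_D(\cdot,Q)$ obeys the $L$-mean value property on every $\mathcal C^{1,1}$ open set $U$ with $\overline U\subset D$, that is $\int_{U^c}\tP_U(x,y)\,\tM_D(y,Q)\,dy=\tM_D(x,Q)$ for $x\in U$, and then to read off singularity from the fact that $\tM_D(\cdot,Q)$ vanishes on $D^c$. By Theorem~\ref{th:MartinPerturb} we may write $\tM_D(y,Q)=l_D(y,Q)/l_D(x_0,Q)$ with $l_D(y,Q)=M_D(y,Q)+\int_D\tG_D(y,z)\,b(z)\cdot\nabla_z M_D(z,Q)\,dz$, so it suffices to prove $\int_{U^c}\tP_U(x,y)\,l_D(y,Q)\,dy=l_D(x,Q)$ for $x\in U$.

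I would split $l_D$ into its two parts. The $M_D$-part is handled by Lemma~\ref{lem:PoissonMartin}(ii), which gives $\int_{U^c}\tP_U(x,y)M_D(y,Q)\,dy=M_D(x,Q)+\int_U\tG_U(x,z)\,b(z)\cdot\nabla_z M_D(z,Q)\,dz$. For the perturbation part I would apply Fubini and then invoke the elementary Green-function identity $\int_{U^c}\tP_U(x,y)\,\tG_D(y,z)\,dy=\tG_D(x,z)-\tG_U(x,z)$, valid for $x\in U$ and every $z\in D\setminus\partial U$ with the convention $\tG_U(x,z)=0$ for $z\notin U$; this follows from the strong Markov property at $\tau_U$ together with the facts that $\tG_D$ vanishes off $D$ and that $\tilde X_{\tau_U}\in D$ almost surely on $\{\tau_U<\tau_D\}$. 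The perturbation part then equals $\int_D\tG_D(x,z)\,b(z)\cdot\nabla_z M_D(z,Q)\,dz-\int_U\tG_U(x,z)\,b(z)\cdot\nabla_z M_D(z,Q)\,dz$, and adding it to the $M_D$-part the two integrals against $\tG_U$ cancel, leaving exactly $l_D(x,Q)$. Dividing by $l_D(x_0,Q)$ yields $\tP_U\tM_D(\cdot,Q)=\tM_D(\cdot,Q)$ on $U$; the case of a general bounded open set $U$ with $\overline U\subset D$ then follows by exhausting $U$ from inside by $\mathcal C^{1,1}$ sets.

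For the interchanges of order of integration and the absolute convergence needed to apply Fubini, I would rely on the estimates already in play in Section~\ref{sec:Martin}: the comparability (\ref{eq:GreenCompar}), the Green and Martin bounds (\ref{eq:Green}), (\ref{eq:Martin}), the gradient estimates (\ref{eq:5}), (\ref{eq:GradGreen}), Lemma~\ref{lem:UnifInt}, and especially the pointwise bound (\ref{eq:GreenMartinEst}) from the proof of Lemma~\ref{lem:continuity}, which shows that $z\mapsto\tG_D(x,z)\,|\nabla_z M_D(z,Q)|$ is integrable against $|b(z)|\,dz$ uniformly in $Q$; together with $\int_{U^c}\tP_U(x,y)\,dy\le 1$ these provide the required domination. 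Establishing this domination, and checking that the Green-function identity above is applied correctly over all of $D$ (and not merely over $(\overline U)^c$), is the step I expect to cost the most work; the algebraic cancellation itself is immediate once it is in place. Finally, since $\tM_D(x,Q)=\lim_{y\to Q}\tG_D(x,y)/\tG_D(x_0,y)$ and $\tG_D(x,\cdot)\equiv 0$ on $D^c$, we have $\tM_D(\cdot,Q)=0$ on $D^c$, while $\tM_D(x_0,Q)=1$; as $\tilde X_{\tau_D}\in D^c$ almost surely, the regular part $x\mapsto\EE^x[\tM_D(\tilde X_{\tau_D},Q)]$ vanishes, and therefore $\tM_D(\cdot,Q)$ is a nonzero singular $L$-harmonic function of $x$ on $D$.
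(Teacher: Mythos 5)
Your proposal is correct and follows essentially the same route as the paper: you reduce to the mean-value identity $\tP_U l_D(\cdot,Q)=l_D(\cdot,Q)$, handle the $M_D$-part via Lemma~\ref{lem:PoissonMartin}(ii) and the perturbation part via Fubini together with the identity $\int_{U^c}\tP_U(x,z)\tG_D(z,w)\,dz=\tG_D(x,w)-\tG_U(x,w)$, which is exactly the paper's (\ref{eq:Greenharm}), and obtain the same cancellation of the $\tG_U$-integrals. The only cosmetic differences are that the paper works with $U=D_r$ and extends by the strong Markov property, while you extend by exhaustion, and that you spell out the singularity (vanishing of the regular part) more explicitly than the paper does.
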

 %%%%%%%%%%%%%%%%%%%%%%%%%%%%%%%%%%%%%%%%%%%%%%%%%%%%%%%%%%%%%%%%%%%%%%%%%%%%%%%%
 \begin{proof}
First consider a ${\mathcal C}^{1,1}$ open set $U=D_r$.
We note that 
\begin{equation}\label{eq:Greenharm}
\tG_D(x,w) = \tG_U(x,w) + \int_{U^c} \tP_U(x,z)\tG_D(z,w)dz.
\end{equation}
By (\ref{eq:TildePM}), (\ref{eq:pertP}), (\ref{eq:Greenharm}) and Fubini's theorem
\begin{align*}
\tP_U l_D(x,Q) &= \int_{U^c} \tP_U(x,z) l_D(z,Q)dz \\
&= \int_{U^c} \tP_U(x,z) M_D(z,Q)dz \\
&+ \int_{U^c} \tP_U(x,z) \int_D \tG_D(z,w) b(w) \cdot \nabla M_D(w,Q) dw dz\\
&=  M_D(x,Q) + \int_{U^c} \int_U \tG_U(x,w) b(w) \cdot \nabla P_U(w,z) dw M_D(z,Q)dz \\
&+ \int_D \left[\int_{U^c} \tP_U(x,z)  \tG_D(z,w) dz\right] b(w) \cdot \nabla M_D(w,Q) dw \\
&=  M_D(x,Q) + \int_U \tG_U(x,w) b(w)\cdot \nabla  M_D(w,Q) dw  \\
&+ \int_D [\tG_D(x,w) - \tG_U(x,w)] b(w) \cdot \nabla M_D(w,Q) dw \\
&=  M_D(x,Q) + \int_D \tG_D(x,w) b(w)\cdot \nabla  M_D(w,Q) dw = l_D(x,Q).
\end{align*}
Thus the function $l_D(x,Q)$ is regular $L$-harmonic on each set $U=D_r$ for $r$ sufficiently small. By the strong Markov property, 
it has the mean value property on each open set $U\subset\bar U\subset D$.
\end{proof}

\subsection{ $ L$-Martin  representation}

The objective of this section is to prove the following  Martin representation theorem for non-negative singular $L$-harmonic functions 
on $D$.

\begin{thm}\label{th:MartinRepr}
 For every non-negative finite measure $\nu$ on $\partial D$ the function $u$ given by
 \begin{equation}\label{eq:MartinIntegral}
 u(x)=\int_{\partial D} \tilde M_D(x,Q)d\nu(Q),
 \end{equation}
 is singular $L$-harmonic on $D$. Conversely, if $u$ is non-negative singular $L$-harmonic on $D$, then there exists a unique non-negative 
 finite measure $\nu$ on $\partial D$ verifying (\ref{eq:MartinIntegral}).
\end{thm}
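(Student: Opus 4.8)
\noindent The plan is to establish, alongside \eqref{eq:MartinIntegral}, the \emph{perturbation formula for singular $\alpha$-harmonic functions}
\begin{equation}\label{eq:perturb_v}
u(x)=v(x)+\int_D\tG_D(x,w)\,b(w)\cdot\nabla v(w)\,dw,\qquad x\in D,
\end{equation}
and to show that $v\mapsto u$ given by \eqref{eq:perturb_v} is a bijection between nonnegative singular $\alpha$-harmonic and nonnegative singular $L$-harmonic functions. The theorem then follows by combining this with the Martin representation for $\Delta^{\alpha/2}$ (\cite{CS1,SW,KBbook}), which says $v=\int_{\partial D}M_D(\cdot,Q)\,d\mu(Q)$ for a unique finite $\mu\ge 0$, and with Theorem~\ref{th:MartinPerturb}, which rewrites $\int_{\partial D}M_D(\cdot,Q)\,d\mu(Q)$ as $\int_{\partial D}l_D(\cdot,Q)\,d\mu(Q)=\int_{\partial D}\tM_D(\cdot,Q)\,d\nu(Q)$ with $d\nu=l_D(x_0,\cdot)\,d\mu$ — the map $\mu\mapsto\nu$ being a bijection of finite nonnegative measures on $\partial D$, since $l_D(x_0,\cdot)$ is continuous and, by \eqref{eq:StrictPos_l} and \eqref{eq:Martin}, bounded above and below by positive constants. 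The \emph{sufficiency} half is then immediate: for finite $\nu\ge 0$, $u$ in \eqref{eq:MartinIntegral} is finite (Corollary~\ref{cor:MartinComp} and \eqref{eq:Martin}), vanishes on $D^c$ because each $\tM_D(\cdot,Q)$ does (Theorem~\ref{th:HarmonicityMartin}), and by Fubini inherits the mean value property of $\tM_D(\cdot,Q)$ on every open $U$ with $\overline U\subset D$; hence $u$ is $L$-harmonic in $D$ and, being nonnegative and null on $D^c$, singular.

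\noindent The work is thus to realize a given nonnegative singular $L$-harmonic $u$ in the form \eqref{eq:perturb_v}. Write $D_n=D_{2^{-n}}$. For large $n$, the mean value property together with $u=0$ on $D^c$ gives $u(x)=\int_{D\setminus D_n}\tP_{D_n}(x,z)u(z)\,dz$ on $D_n$. Put $v_n(x)=\int_{D\setminus D_n}P_{D_n}(x,z)u(z)\,dz$; this is finite and $\alpha$-harmonic in $D_n$, and $c^{-1}u\le v_n\le cu$ on $D_n$ with $c$ independent of $n$ by Lemma~\ref{lem:PoissonKer}(ii). Inserting the Poisson perturbation formula \eqref{eq:pertP} for $\tP_{D_n}$ into the expression for $u$ and exchanging integrals and $\nabla$ with the help of Lemma~\ref{GradEstimPoisson} and Lemma~\ref{lem:UnifInt} (legitimate for each fixed $n$, $u$ being bounded near $\partial D_n$) yields
$$u(x)=v_n(x)+\int_{D_n}\tG_{D_n}(x,w)\,b(w)\cdot\nabla v_n(w)\,dw,\qquad x\in D_n .$$
The $v_n$ are locally uniformly bounded by $cu$ and, by Lemma~\ref{GradEstim}, locally equicontinuous, so along a subsequence $v_n\to v$ locally uniformly; then $v\ge 0$, $c^{-1}u\le v\le cu$, $v=0$ on $D^c$, and passing the mean value property on small balls to the limit by dominated convergence (dominating function $\approx u$, finite since $u$ is $L$-harmonic and $P\approx\tP$) shows $v$ is $\alpha$-harmonic in $D$, hence singular $\alpha$-harmonic. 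By the classical representation $v=\int_{\partial D}M_D(\cdot,Q)\,d\mu(Q)$ with $\mu\ge 0$ finite, so in particular $u\le C\int_{\partial D}M_D(\cdot,Q)\,d\mu(Q)$.

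\noindent Now let $n\to\infty$ in the last display: $v_n\to v$, $\tG_{D_n}\uparrow\tG_D$, $\nabla v_n\to\nabla v$ by \eqref{eq:gradharmonic}, and — the decisive step — the bound $v_n\le cu\le C\int_{\partial D}M_D(\cdot,Q)\,d\mu(Q)$, Lemma~\ref{GradEstim}, Lemma~\ref{lem:PoissonKer}(i) and the \emph{uniform} integrability of Lemma~\ref{lem:uniforminteg} (integrated in $Q$ against $\mu$) show that $\tG_{D_n}(x,\cdot)\,b\cdot\nabla v_n$ is dominated, uniformly in $n$, by a function integrable against $|b(w)|\,dw$; so the integral term converges and we obtain \eqref{eq:perturb_v}. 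Inserting $v=\int_{\partial D}M_D(\cdot,Q)\,d\mu(Q)$ into \eqref{eq:perturb_v} and moving $\int d\mu$ past $\nabla$ and past $\int_D\tG_D\,b\cdot$ (Fubini, justified by the pointwise bounds in the proof of Lemma~\ref{lem:continuity}) turns \eqref{eq:perturb_v} into $u=\int_{\partial D}l_D(\cdot,Q)\,d\mu(Q)=\int_{\partial D}\tM_D(\cdot,Q)\,d\nu(Q)$ with $d\nu=l_D(x_0,\cdot)\,d\mu$, a finite nonnegative measure ($\nu(\partial D)\le Cv(x_0)<\infty$). This proves existence of $\nu$.

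\noindent For uniqueness, suppose $u=\int_{\partial D}\tM_D(\cdot,Q)\,d\nu(Q)$ with $\nu\ge 0$ finite; put $d\mu=d\nu/l_D(x_0,\cdot)$ and $v=\int_{\partial D}M_D(\cdot,Q)\,d\mu(Q)$, so that reversing the last computation shows $u,v$ satisfy \eqref{eq:perturb_v}. Since $Lu=0$ gives $\Delta^{\alpha/2}u=-\,b\cdot\nabla u$, the function $u-\int_DG_D(\cdot,w)\,b(w)\cdot\nabla u(w)\,dw$ is $\alpha$-harmonic in $D$ and vanishes on $D^c$, and the companion of \eqref{eq:pertG},
$$\tG_D(x,y)=G_D(x,y)+\int_DG_D(x,z)\,b(z)\cdot\nabla_z\tG_D(z,y)\,dz$$
(proved exactly as \eqref{eq:pertG}), identifies it with $v$; hence $v$, and therefore $\mu$ by uniqueness of the $\Delta^{\alpha/2}$-Martin representation, and therefore $\nu=l_D(x_0,\cdot)\,\mu$, is determined by $u$. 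The principal obstacle of the whole argument is the $n\to\infty$ passage above: near $\partial D_n$ the integrand blows up through the factor $\delta_{D_n}(w)^{-1}$ carried by $|\nabla v_n|$, and dominating it \emph{uniformly in} $n$ is precisely the purpose of Lemma~\ref{lem:uniforminteg}, which in turn rests on the Kato-class hypothesis $b\in\mathcal{K}_d^{\alpha-1}$.
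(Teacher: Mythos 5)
Your sufficiency argument and, more importantly, your existence argument follow the paper's route: mean value property of $u$ on $D_{2^{-n}}$, insertion of the Poisson perturbation formula (\ref{eq:pertP}), the $\alpha$-harmonic approximants $v_n$ comparable to $u$ by Lemma \ref{lem:PoissonKer}(ii), and the passage $n\to\infty$ dominated, uniformly in $n$, by combining $|\nabla v_n|\le d\,v_n/\delta_{D_{2^{-n}}}$, Lemma \ref{lem:PoissonKer}(i) and Lemma \ref{lem:uniforminteg} integrated in $Q$ against the representing measure; the final conversion $d\nu=l_D(x_0,\cdot)\,d\mu$ via Theorem \ref{th:MartinPerturb} is also the paper's. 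The one genuine deviation in this half — producing the limit $v$ by Arzel\`a--Ascoli on the functions $v_n$ (local bounds $v_n\le cu$ plus Lemma \ref{GradEstim}) and then invoking the classical $\alpha$-stable Martin representation of $v$, instead of the paper's Bogdan-style tightness argument for the measures $\mu_n$ with $u^*_n(x)=\int G_{D_{1/n}}(x,\xi)/G_{D_{1/n}}(x_0,\xi)\,d\mu_n(\xi)$ — is legitimate and buys nothing and loses nothing: both deliver $v=u^*$ together with (\ref{eq:uStarMartin}), which is all the uniform-integrability step needs.

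The uniqueness half, however, has real gaps. The paper obtains uniqueness by running Bogdan's argument for $\alpha$-harmonic functions directly on $\tM_D$, using Theorem \ref{th:HarmonicityMartin}, Lemma \ref{lem:continuity}, (\ref{eq:Martin}) and (\ref{eq:MartinComp}); you instead try to construct an explicit inverse $u\mapsto u-\int_D G_D(\cdot,w)\,b(w)\cdot\nabla u(w)\,dw$ of the map $v\mapsto v+\int_D\tG_D(\cdot,w)\,b(w)\cdot\nabla v(w)\,dw$, and this rests on two unproved ingredients. First, the ``companion'' identity $\tG_D(x,y)=G_D(x,y)+\int_D G_D(x,z)\,b(z)\cdot\nabla_z\tG_D(z,y)\,dz$ is not ``proved exactly as (\ref{eq:pertG})'': the proof of (\ref{eq:pertG}) in \cite[Lemma 12]{KBTJ} keeps the gradient on the unperturbed kernel, where (\ref{eq:GradGreen}) and Lemma \ref{lem:UnifInt} apply, whereas your version needs gradient estimates for $\tG_D$ itself, which are nowhere available in this paper. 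Second, the step ``$Lu=0$ gives $\Delta^{\alpha/2}u=-b\cdot\nabla u$'', and the claim that $u-\int_D G_D(\cdot,w)\,b(w)\cdot\nabla u(w)\,dw$ is $\alpha$-harmonic and vanishes on $D^c$, treat the probabilistically defined $L$-harmonic $u$ as a classical solution with a gradient controlled near $\partial D$: no differentiability or gradient estimate for $L$-harmonic functions is established here, so even the finiteness of $\int_D G_D(x,w)|b(w)||\nabla u(w)|\,dw$ is unjustified, as are the Fubini/differentiation-under-the-integral manipulations needed to identify this expression with $v$. (Your preliminary reduction — that any $u=\int_{\partial D}\tM_D(\cdot,Q)\,d\nu(Q)$ satisfies (\ref{eq:perturb_v}) with $v=\int_{\partial D}M_D(\cdot,Q)\,d\nu(Q)/l_D(x_0,Q)$, by Fubini from Theorem \ref{th:MartinPerturb} — is fine.) The clean repair is to prove injectivity of $\nu\mapsto\int\tM_D(\cdot,Q)\,d\nu(Q)$ as the paper does, from the kernel properties of $\tM_D$ (continuity, (\ref{eq:MartinComp}), (\ref{eq:Martin})), following the uniqueness proof in \cite{KBHirosh}, rather than by inverting the perturbation formula.
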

\begin{proof}
The $L$-harmonicity of the Martin integral (\ref{eq:MartinIntegral}) and the uniqueness of the representation follow from Theorem~\ref{th:HarmonicityMartin},  Lemma~\ref{lem:continuity}, (\ref{eq:Martin}), (\ref{eq:MartinComp}) and Fubini theorem, in the same way as in the case of the Martin representation for $\alpha$-harmonic functions in \cite[proof of Theorem 1]{KBHirosh}. We will now focus on the 
existence part. By $L$-harmonicity of $u$ and by (\ref{eq:pertP}) we have for each $n$
 \begin{eqnarray*}
  u(x)&=&\int_{D_{1/n}^c} \tP_{D_{1/n}}(x,y) u(y) dy=\\
  &=&\int_{D_{1/n}^c} u(y)\left[P_{D_{1/n}}(x,y) +\int_{D_{1/n}}  \tG_{D_{1/n}}(x,w) b(w)\cdot \nabla P_{D_{1/n}}(w,y)dw\right]dy.
 \end{eqnarray*}
 Denote
 $$
 u^*_n(x)= \int_{D_{1/n}^c} P_{D_{1/n}}(x,y) u(y) dy.
 $$
By (\ref{eq:6}), \cite[(72)]{KBTJ} and Lemma \ref{lem:UnifInt} we have
$$
\int_{D_{1/n}^c}\int_{D_{1/n}}\tG_{D_{1/n}}(x,w)|b(w)||\nabla P_{D_{1/n}}(w,y)|u(y)dwdy
$$
$$
\leq C\int_{D_{1/n}}\tG_{D_{1/n}}(x,w)|b(w)|\frac{u(w)}{\delta_{D_{1/n}}(w)}dw<\infty,
$$
where $C=C(\alpha,b,D_{1/n})>0$. Hence, by Fubini theorem
 \begin{eqnarray*}
  u(x)&=&  u^*_n(x) +\int_{D_{1/n}}\tG_{D_{1/n}}(x,w) b(w)\cdot \int_{D_{1/n}^c}\nabla P_{D_{1/n}}(w,y)u(y)dydw.
 \end{eqnarray*}
The function $u^*_n$ is $\alpha$-harmonic on $D_{1/n}$,  so it is differentiable.
In order to justify the exchange of $\int$ and $\nabla$ in the last integral we fix $w\in D_{1/n}$.
Then by (\ref{eq:6}) and \cite[(72)]{KBTJ}, for $\varepsilon>0$ sufficiently small and 
all $w'\in B(w,\varepsilon)$ and $y\in D_{1/n}^c$ we have
$$
|\nabla_{w'} P_{D_{1/n}}(w',y)u(y)|\leq C\frac{u(y)}{\delta_{D_{1/n}}(y)^{\alpha/2}},
$$
where $C=C(\alpha,b,D_{1/n},\varepsilon)>0$. Since the last term is integrable on $D_{1/n}^c$, by the dominated convergence
we obtain
 \begin{eqnarray}\label{eq:unStar}
  u(x)= u^*_n(x) + \int_D  \tG_{D_{1/n}}(x,w) b(w)\cdot \nabla u^*_n(w) dw.
 \end{eqnarray}
We now study the sequence $ u^*_n(x)$ in the same way as K. Bogdan \cite{KBHirosh}
in the proof of the existence part of the $\Delta^{\alpha/2}-$Martin representation, with  the difference that  in our
case the function $u$ under the integral defining $ u^*_n$ is not $\alpha$-harmonic. 

Like in \cite[(2.27)]{KBHirosh}  we have
$$
u^*_n(x)=\int_{D_{1/n}^c} P_{D_{1/n}}(x,y) u(y) dy=\int_{D_{1/n}^c}\int_{D_{1/n}} u(y) \mathcal{A}_{d,\alpha}\frac{G_{D_{1/n} }(x,\xi)}{|\xi-y|^{d+\alpha}}d\xi dy
$$
Set $\mu_n(d\xi)=   \mathcal{A}_{d,\alpha} G_{D_{1/n} }(x_0,\xi)  \int_{D_{1/n}}\frac{ u(y)}{|\xi-y|^{d+\alpha}}  dy d\xi$.
 Lemma \ref{lem:PoissonKer} implies that 
$$
\mu_n(\RR^d)= \int_{D_{1/n}^c} P_{D_{1/n}}(x_0,y) u(y) dy \le C \int_{D_{1/n}^c} \tP_{D_{1/n}}(x_0,y) u(y) dy  =cu(x_0) <\infty
$$
(recall that if $u$ was $\alpha$-harmonic, then $\mu_n(\RR^d)=u(x_0))$.
We obtain
$$
u^*_n(x)=\int_{D_{1/n}}\frac{ G_{D_{1/n} }(x,\xi)}{ G_{D_{1/n} }(x_0,\xi)} \mu_n(d\xi).
$$
The only other property of the function $u$ intervening  in the proof of the existence part of the $\Delta^{\alpha/2}-$Martin representation
in \cite{KBHirosh} is
$$
 \lim_n \int_{D_{1/n}^c} u(y)dy =0
$$
and it also holds in our case: the $L$-harmonic function $u$ is   integrable on $D_{1/n}^c$ for every $n$. 
The sequence  $(\mu_n)$ of simultaneously bounded finite measures with  support contained in $\bar D$
is tight. We choose a subsequence $\mu_{n_k}$ converging to a finite (perhaps zero)  measure $\mu$. This choice
is common for all $x$. Without loss of generality, we may suppose that $(n_k)$ is a subsequence of $(2^{-n})$. 
The limit measure $\mu$ satisfies 
$${\rm supp}(\mu)\subset \partial D.$$
Exactly as in the proof of the existence part of the $\Delta^{\alpha/2}-$Martin representation
in \cite{KBHirosh}, we deduce that 
for all $x\in D$ the limit
$$\lim_{k} u_{n_k}^*(x)=u^*(x) $$
exists and
\begin{eqnarray}\label{eq:uStarMartin}
u^*(x) = \int_{\partial D} M_D(x,Q) d\mu(Q). 
 \end{eqnarray} 
Furthermore, in view of (\ref{eq:gradharmonic}), for $x\in D_{1/n}$ and $r>0$ sufficiently small we have
$$
\nabla u^*_n(x)=\nabla\int_{B(x,r)^c}P_{B(x,r)}(x,y)u^*_n(y)dy=\int_{B(x,r)^c}\nabla P_{B(x,r)}(x,y)u^*_n(y)dy.
$$
By Lemma~\ref{lem:PoissonKer} and (\ref{eq:gradPoisson}) we have
$$
|\nabla P_{B(x,r)}(x,y)u^*_n(y)|\leq C\frac{P_{B(x,r)}(x,y)}{r-|x|}u(y),
$$
and by the dominated convergence we get $\nabla u^*_{n_k}(x)\to\nabla u^*(x)$ as $k\to\infty$.
We also have $\tG_{D_{1/n}}(x,w)\nearrow\tG_D(x,w)$. In order to justify 
the passage with the limit under the integral sign in (\ref{eq:unStar}) with $n_k$ instead of $n$ we observe that
the functions $\tG_{D_{1/{n_k}}}(x,w) b(w)\cdot \nabla u^*_{n_k}(w)$ are uniformly integrable on $D$. Clearly, by Lemma~\ref{lem:PoissonKer} we have 
$c^{-1} u^*_n(w)\leq u(w)\leq c u^*_n(w)$, where $c$ does not depend on $n$, thus $u^*_n(w)\leq c u^*(w)$. 
By the gradient estimates we get
$$
\tG_{D_{1/n}}(x,w)|b(w)||\nabla u^*_n(w)|\leq \tG_{D_{1/n}}(x,w)|b(w)|\frac{u^*(w)}{\delta_{D_{1/n}}(w)},
$$
and the uniform integrability follows from (\ref{eq:uStarMartin}), Lemma~\ref{lem:PoissonKer} and Lemma~\ref{lem:uniforminteg}. 
Therefore 
\begin{eqnarray}\label{eq:uStar}
  u(x)= u^*(x) + \int_{D}  \tG_{D}(x,w) b(w)\cdot \nabla u^*(w) dw,
 \end{eqnarray}
 which, using (\ref{eq:uStarMartin}), becomes
 \begin{eqnarray}\label{eq:uM}
  &&u(x)=\\
  &&= \int_{\partial D} M_D(x,Q) d\mu(Q) + \int_{D}  \tG_{D}(x,w) b(w)\cdot \nabla\int_{\partial D} M_D(w,Q) d\mu(Q)dw.\nonumber
 \end{eqnarray}
 By the gradient estimates and dominated convergence we also get 
 $$
 \nabla\int_{\partial D} M_D(w,Q) d\mu(Q)=\int_{\partial D}\nabla M_D(w,Q) d\mu(Q),\quad w\in D.
 $$ 
 Define a  measure $\nu$ on $\partial D$ by $\nu(dQ)=l_D(x_0,Q) d\mu(Q)$. As the function $Q\ra l_D(x_0,Q)$ is continuous positive,
 the measure  $\nu$ is finite positive on  $\partial D$. Using Fubini theorem in (\ref{eq:uM}) and the perturbation formula for $\tM_D$ from
 Theorem~\ref{th:MartinPerturb}, we obtain 
  \begin{eqnarray*}
  u(x)= \int_{\partial D} \tM_D(x,Q)d\nu(Q). 
 \end{eqnarray*} 
 \end{proof}
 
 \begin{remark}
 We point out that the proof of Theorem~\ref{th:MartinRepr} is based on the perturbation formula. In fact, the methods used in \cite{KBHirosh} 
 in order to prove the Martin representation theorem for singular $\alpha$-harmonic functions can not be applied in the present case because 
 the Green function $\tG_D(x,y)$ is not $L$-harmonic on $D\setminus\left\{x\right\}$ as a function of $y$.
 \end{remark}

 \begin{cor}\label{cor:HarmPerturb}
  ({\bf Perturbation formula for singular $L$-harmonic functions})
 Let $v(x)\ge 0$ be a singular $L$-harmonic function on $D$ with the Martin representation 
  \begin{equation}\label{eq:v} 
v(x)=\int_{\partial D} \tilde M_D(x,Q)d\nu(Q),\ \ \ x\in D.  
\end{equation}
Define
 a singular $\alpha$-harmonic function $v^*$  on $D$ by
  \begin{equation}\label{eq:v_star}
v^*(x)=\int_{\partial D}   M_D(x,Q)\displaystyle{\frac{d\nu(Q)}{l(x_0,Q)}},\ \  x\in D
  \end{equation} 
 Then the following formula holds
 \begin{equation}\label{eq:perturb_v}
v(x)= v^*(x)  + \int_{D}  \tG_{D}(x,w) b(w)\cdot \nabla v^*(w) dw.
  \end{equation}
 \end{cor}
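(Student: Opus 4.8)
\textbf{Proof proposal for Corollary~\ref{cor:HarmPerturb}.}
The plan is to reduce the identity \eqref{eq:perturb_v} to the already-established formula \eqref{eq:uStar} (equivalently \eqref{eq:uM}) obtained in the course of proving Theorem~\ref{th:MartinRepr}, together with the perturbation formula \eqref{eq:PerturbMartin} for $\tilde M_D$. Indeed, \eqref{eq:uStar} reads $u(x)=u^*(x)+\int_D \tG_D(x,w)\,b(w)\cdot\nabla u^*(w)\,dw$, where $u^*(x)=\int_{\partial D}M_D(x,Q)\,d\mu(Q)$ and $\nu(dQ)=l_D(x_0,Q)\,d\mu(Q)$; since $l_D(x_0,\cdot)$ is continuous and strictly positive on the compact set $\partial D$ by \eqref{eq:StrictPos_l}, we may pass freely between $\mu$ and $\nu$, and then $u^*$ coincides with the function $v^*$ of \eqref{eq:v_star}. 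So the content of the corollary is exactly \eqref{eq:uStar} rewritten in terms of $\nu$ rather than $\mu$.

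First I would invoke Theorem~\ref{th:MartinRepr}: given the non-negative singular $L$-harmonic $v$, there is a unique finite non-negative measure $\nu$ on $\partial D$ with $v(x)=\int_{\partial D}\tilde M_D(x,Q)\,d\nu(Q)$, which is the hypothesis \eqref{eq:v}. Next, define $\mu(dQ)=l_D(x_0,Q)^{-1}\nu(dQ)$, a finite non-negative measure by the continuity and strict positivity of $Q\mapsto l_D(x_0,Q)$. Then $v^*$ of \eqref{eq:v_star} is precisely $v^*(x)=\int_{\partial D}M_D(x,Q)\,d\mu(Q)$, which is singular $\alpha$-harmonic on $D$ by the classical $\Delta^{\alpha/2}$-Martin representation (cf.\ \cite{KBHirosh}) and the estimate \eqref{eq:Martin}; this justifies the phrase ``singular $\alpha$-harmonic function $v^*$'' in the statement.

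It then remains to verify the identity \eqref{eq:perturb_v}, which I would do by substituting the perturbation formula $\tilde M_D(x,Q)=l_D(x_0,Q)^{-1}\bigl[M_D(x,Q)+\int_D\tG_D(x,z)\,b(z)\cdot\nabla_z M_D(z,Q)\,dz\bigr]$ from \eqref{eq:PerturbMartin} into \eqref{eq:v} and applying Fubini's theorem:
\begin{align*}
v(x)&=\int_{\partial D}M_D(x,Q)\,d\mu(Q)+\int_{\partial D}\left[\int_D\tG_D(x,z)\,b(z)\cdot\nabla_z M_D(z,Q)\,dz\right]d\mu(Q)\\
&=v^*(x)+\int_D\tG_D(x,z)\,b(z)\cdot\left[\int_{\partial D}\nabla_z M_D(z,Q)\,d\mu(Q)\right]dz\\
&=v^*(x)+\int_D\tG_D(x,z)\,b(z)\cdot\nabla v^*(z)\,dz,
\end{align*}
where in the last line I exchange $\nabla$ and $\int_{\partial D}$ as in the end of the proof of Theorem~\ref{th:MartinRepr} (gradient estimate \eqref{eq:5} for $M_D(\cdot,Q)$, bound \eqref{eq:Martin}, and dominated convergence).

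The main obstacle is purely one of justifying the interchanges of integration and differentiation: the Fubini step needs $\int_{\partial D}\int_D\tG_D(x,z)\,|b(z)|\,|\nabla_z M_D(z,Q)|\,dz\,d\mu(Q)<\infty$ uniformly, which follows from \eqref{eq:GreenMartinEst} (established in the proof of Lemma~\ref{lem:continuity}) integrated against $|b(z)|\,dz$ — this is finite since $b\in\mathcal K_d^{\alpha-1}$ and $\mu$ is a finite measure — while the $\nabla$–$\int_{\partial D}$ exchange uses $\tG_D(x,z)|b(z)|\,|\nabla_z M_D(z,Q)|\le C|z-x|^{\alpha-d}|z-Q|^{\alpha-1-d}$ on a small ball around any fixed $x$, again uniformly integrable in $Q$ against $|b(z)|\,dz$ by Lemma~\ref{lem:UnifInt}-type arguments. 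All of these estimates are already available in the excerpt, so the corollary follows directly once they are assembled.
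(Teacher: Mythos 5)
Your proposal is correct, but it takes a more direct route than the paper. The paper proves the corollary by going back inside the proof of Theorem~\ref{th:MartinRepr}: using the uniqueness of the Martin representation it identifies the function $v^*$ of (\ref{eq:v_star}) with the function $u^*$ obtained there by the limit procedure (via (\ref{eq:uStarMartin})), and then simply quotes the already-established identity (\ref{eq:uStar}). You instead bypass the limit construction and the uniqueness statement entirely: you substitute the perturbation formula (\ref{eq:PerturbMartin}) for $\tilde M_D$ into the hypothesis (\ref{eq:v}) and justify Fubini and the exchange of $\nabla$ with $\int_{\partial D}d\mu$, using precisely the bound (\ref{eq:GreenMartinEst}) (which, as its derivation shows, holds for any fixed $x\in D$, uniformly in $Q$) together with (\ref{eq:5}), (\ref{eq:Martin}) and dominated convergence; the strict positivity (\ref{eq:StrictPos_l}) and continuity of $l_D(x_0,\cdot)$ guarantee that $\mu=\nu/l_D(x_0,\cdot)$ is a finite measure, so $v^*$ is indeed singular $\alpha$-harmonic by the classical representation. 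This is essentially the final Fubini step of the paper's proof of Theorem~\ref{th:MartinRepr} run in the reverse direction, so the analytic ingredients coincide; what your version buys is a self-contained verification valid for the given $\nu$ without appealing to the uniqueness part or to the internals of the limit procedure, while the paper's version is shorter because all the work is already done in the theorem. One small slip: in your last paragraph the factor $|b(z)|$ appears on the left-hand side of the pointwise bound $\tG_D(x,z)|b(z)|\,|\nabla_z M_D(z,Q)|\le C|z-x|^{\alpha-d}|z-Q|^{\alpha-1-d}$ but not on the right; it should be omitted on the left (the bound is on $\tG_D(x,z)|\nabla_z M_D(z,Q)|$, which is then integrated against $|b(z)|\,dz$ using the Kato condition), exactly as in the proof of Lemma~\ref{lem:continuity}.
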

 \begin{proof}
 Observe that by (\ref{eq:StrictPos_l})  there exists $\delta>0$ such that
 $$
 l_D(x_0,Q)>\delta>0
 $$
 for all $Q\in\partial D$. Thus the measure $d\mu(Q)=\frac{d\nu(Q)}{l_D(x_0,Q)}$
 is finite and the function $v^*$ is well defined.
 By the unicity of the Martin representation 
 and  the formula (\ref{eq:uStarMartin}), the function $v^*$ defined by (\ref{eq:v_star}) is the same as
 the function $v^*$ defined by a limit procedure and associated to $v$   in the proof of the Theorem \ref{th:MartinRepr}.
 Hence the formula   (\ref{eq:uStar}) holds for $v$ and $v^*$.
 It is equivalent to (\ref{eq:perturb_v}).
 \end{proof}
  
  \begin{cor}\label{cor:HarmCompar}
  Let $v(x)\ge 0$ be a singular $L$-harmonic function on $D$.
   The functions $v$ and $v^*$ are comparable:
there exists $c>0$ such that for all $x\in D$
 \begin{equation}\label{eq:v_and_v_star}
  c^{-1} v^*(x)\le v(x)  \le  c v^*(x).
 \end{equation}
\end{cor}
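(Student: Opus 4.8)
The statement to prove is Corollary~\ref{cor:HarmCompar}: for a non-negative singular $L$-harmonic function $v$ on $D$ with Martin representation $v(x)=\int_{\partial D}\tilde M_D(x,Q)\,d\nu(Q)$, the function $v$ and its associated singular $\alpha$-harmonic function $v^*$ given by (\ref{eq:v_star}) satisfy $c^{-1}v^*(x)\le v(x)\le cv^*(x)$ for some constant $c>0$ independent of $x$. The natural strategy is to reduce everything to the already-established pointwise comparability of the Martin kernels, namely (\ref{eq:MartinComp}) in Corollary~\ref{cor:MartinComp}: there is a constant $c$ with $c^{-1}M_D(x,Q)\le\tilde M_D(x,Q)\le cM_D(x,Q)$ for all $x\in D$ and $Q\in\partial D$.

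First I would combine (\ref{eq:MartinComp}) with the strict positivity bound (\ref{eq:StrictPos_l}), $l_D(x_0,Q)\ge aM_D(x_0,Q)>0$, together with the fact (used already in Corollary~\ref{cor:HarmPerturb}) that $Q\mapsto l_D(x_0,Q)$ is continuous and bounded below by some $\delta>0$ on the compact set $\partial D$; by Corollary~\ref{cor:MartinComp} and (\ref{eq:Martin}) it is also bounded above. Hence there are constants $c_1,c_2>0$ such that $c_1\le l_D(x_0,Q)\le c_2$ for all $Q\in\partial D$. Then for fixed $x\in D$ I would write, directly from the definitions (\ref{eq:v}) and (\ref{eq:v_star}),
\begin{align*}
v(x)&=\int_{\partial D}\tilde M_D(x,Q)\,d\nu(Q)\le c\int_{\partial D}M_D(x,Q)\,d\nu(Q)\\
&=c\int_{\partial D}M_D(x,Q)\,l_D(x_0,Q)\,\frac{d\nu(Q)}{l_D(x_0,Q)}\le c\,c_2\int_{\partial D}M_D(x,Q)\,\frac{d\nu(Q)}{l_D(x_0,Q)}=c\,c_2\,v^*(x),
\end{align*}
and symmetrically $v(x)\ge c^{-1}\int_{\partial D}M_D(x,Q)\,d\nu(Q)\ge c^{-1}c_1\,v^*(x)$ (using $l_D(x_0,Q)\ge c_1$ in the denominator). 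Taking the constant in (\ref{eq:v_and_v_star}) to be $\max(c\,c_2,\,c/c_1)$ finishes the argument. Every inequality here is uniform in $x$ because the constants $c$ in (\ref{eq:MartinComp}) and $c_1,c_2$ bounding $l_D(x_0,\cdot)$ depend only on $D$, $\alpha$, $b$.

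There is essentially no serious obstacle: the corollary is a one-line consequence of material already in hand. The only point deserving a word of care is justifying the two-sided bound $c_1\le l_D(x_0,Q)\le c_2$ uniformly in $Q$ — the lower bound is (\ref{eq:StrictPos_l}) combined with $M_D(x_0,\cdot)$ being bounded below on $\partial D$ by the explicit estimate (\ref{eq:Martin}) (note $\delta_D(x_0)>0$ is fixed), while the upper bound follows from $l_D(x_0,Q)=l_D(x_0,x_0)\,\tilde M_D(x_0,Q)$-type reasoning, or more simply from Corollary~\ref{cor:MartinComp} together with the integrability/estimates already used to define $l_D$. Alternatively one may avoid bounding $l_D(x_0,\cdot)$ above and below separately by observing that $\tilde M_D(x,Q)=l_D(x,Q)/l_D(x_0,Q)$, so $v(x)=\int_{\partial D}\frac{l_D(x,Q)}{l_D(x_0,Q)}d\nu(Q)=\int_{\partial D}l_D(x,Q)\,d\mu(Q)$ with $d\mu=d\nu/l_D(x_0,\cdot)$, and then (\ref{eq:StrictPos_l}) gives $l_D(x,Q)\ge aM_D(x,Q)$ while (\ref{eq:GreenCompar}) applied inside the definition of $l_D$ gives the matching upper bound $l_D(x,Q)\le a'M_D(x,Q)$; integrating against $d\mu$ yields (\ref{eq:v_and_v_star}) with $v^*(x)=\int_{\partial D}M_D(x,Q)\,d\mu(Q)$ directly. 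I would present this second, slightly cleaner route.
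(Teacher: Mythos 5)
Your proof is correct and follows essentially the same route as the paper, which likewise deduces (\ref{eq:v_and_v_star}) from the Martin representations (\ref{eq:v}) and (\ref{eq:v_star}), the kernel comparability (\ref{eq:MartinComp}) of Corollary \ref{cor:MartinComp}, and the fact that $l_D(x_0,\cdot)$ is bounded away from $0$ (and, implicitly, bounded above, since it is continuous and positive on the compact set $\partial D$ --- a point you rightly make explicit). The only small imprecision is in your second route: the matching upper bound $l_D(x,Q)\le a' M_D(x,Q)$ is most directly obtained from the limit characterization $l_D(x,Q)=\lim_{y\to Q}\tilde G_D(x,y)/G_D(x_0,y)$ together with (\ref{eq:GreenCompar}) (exactly as the paper derives (\ref{eq:StrictPos_l})), rather than by applying (\ref{eq:GreenCompar}) inside the integral in the formula defining $l_D$.
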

\begin{proof}
 We use the Martin representations (\ref{eq:v}), (\ref{eq:v_star}), the Corollary  \ref{cor:MartinComp}
 and the fact that $l_D(x_0,Q)>\delta>0$ for all $Q\in\partial D$.
\end{proof}
%%%%%%%%%%%%%%%%%%%%%%%%%%%%%%%%%%%%%%%%%%%%%%%%%%%%%%%%%%%%%%%%%%%%%%%%%%%%%%%%%%%%%%%%%%%
 \subsection{Perturbation formulas in the diffusion case}\label{rem:diffusionCase} 
 %%%%%%%%%%%%%%%%%%%%%%%%%%%%%%%%%%%%%%%%%%%%%%%%%%%%%%%%%%%%%%%%%%%%%%%%%%%%%%%%%%%%%%%%%%%%
In the  present article we exploit the perturbation formulas in the case of the 
singular operator $L=\Delta^{\alpha/2}+ b\cdot\nabla$, $1<\alpha<2$. In this short 
chapter we make a parenthesis and briefly discuss the case $\alpha=2$ and $d\ge 3$, corresponding
to the diffusion operator
$$
L=\frac12\Delta + b\cdot\nabla
$$
on $\RR^d$. The potential theory for such diffusion 
generators was studied by Cranston and Zhao\cite{CranstonZ},
and more recently by Ifra and Riahi\cite{IfraR}, Kim and Song\cite{KimSDiff} and Luks\cite{TLuksStudia}. Our methods allow
to enrich this theory by some new perturbation formulas.

We suppose that
$b\in{\mathcal K}^1_d$ and we assume additionally that $D$ is connected, i.e. it is a domain.  Recall that   Cranston and Zhao\cite{CranstonZ} 
worked under this condition and a complementary second condition  $|b|^2\in  {\mathcal K}^1_{d-1}$; 
 Kim and Song\cite{KimSDiff} suppressed the  condition on  $|b|^2$ and  considered
 signed measures in the place of $b$. 
 
 \begin{prop}\label{pr:PertGreenDiff}
  Let $L=\frac12\Delta + b\cdot\nabla$ with $ b\in  {\mathcal K}^1_d$. Then the following perturbation formula
  for the $L$-Green function $\tilde G_D$ holds
 if $x,y\in\RR^d, x\not=y.$  
\begin{equation}\label{eq:pertGDiff}
 \tilde G_D(x,y)=  G_D(x,y)+\int_D \tilde G_D(x,z) b(z)  \cdot \nabla_z  G_D(z,y) dz.
 \end{equation}
\end{prop}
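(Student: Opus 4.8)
The plan is to reduce the diffusion case ($\alpha=2$) to the same functional-analytic scheme used for $1<\alpha<2$, replacing the jump-kernel identities by the classical ones for Brownian motion with drift. First I would recall the probabilistic construction of the process $\tilde X$ generated by $L=\frac12\Delta+b\cdot\nabla$ with $b\in\mathcal K_d^1$: under this Kato condition the Feynman–Kac-type (Girsanov) perturbation of Brownian motion is well defined, $\tilde X$ admits a transition density, and the Green function $\tilde G_D$ is finite and comparable to $G_D$ on $D$ (this is exactly the Cranston–Zhao / Kim–Song theory cited just above, so I may invoke it). The key local identity replacing (\ref{eq:pertG}) is the Duhamel / variation-of-parameters formula at the level of transition densities,
\[
\tilde p_t^D(x,y)=p_t^D(x,y)+\int_0^t\!\!\int_D \tilde p_s^D(x,z)\,b(z)\cdot\nabla_z p_{t-s}^D(z,y)\,dz\,ds,
\]
which follows from differentiating $s\mapsto \int \tilde p_s^D(x,\cdot)p_{t-s}^D(\cdot,y)$ and using that the generators differ by $b\cdot\nabla$. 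Integrating in $t$ from $0$ to $\infty$ and using Fubini then yields (\ref{eq:pertGDiff}).

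The second step is to justify all the interchanges. The gradient estimate for the Dirichlet heat kernel, $|\nabla_z p_t^D(z,y)|\le C t^{-1/2}p_{ct}^D(z,y)$ (or, after integrating in $t$, the analogue of (\ref{eq:GradGreen}): $|\nabla_z G_D(z,y)|\le C\,G_D(z,y)/(\delta_D(z)\wedge|z-y|)$, valid for the classical Green function of a $\mathcal C^{1,1}$ domain), together with the comparability $\tilde G_D\approx G_D$, reduces the finiteness of the double integral to the statement that $G_D(z,y)/(\delta_D(z)\wedge|z-y|)$ is integrable against $|b(z)|\,dz$ uniformly in $y$ — the exact analogue of Lemma~\ref{lem:UnifInt}, which goes through verbatim for $\alpha=2$ because the Green-function bound $G_D(z,y)\approx |z-y|^{2-d}(\delta_D(z)\delta_D(y)/|z-y|^2\wedge 1)$ has the same shape (this is where $d\ge 3$ is used, to make $|z-y|^{2-d}$ an honest singularity). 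With absolute convergence in hand, Fubini and the passage of $\nabla_z$ under the $t$-integral are legitimate by dominated convergence.

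The last step is bookkeeping: one should check the formula holds for $x\ne y$ in all of $\RR^d$, not merely in $D$; but $G_D$ and $\tilde G_D$ vanish off $D$ and the integral is over $D$, so the identity is trivially $0=0$ when $x\notin D$ or $y\notin D$, and for $x,y\in D$ it is what we proved. I expect the main obstacle to be purely technical — establishing the heat-kernel gradient bound uniformly up to the boundary of a $\mathcal C^{1,1}$ domain and verifying the Kato-class integrability that makes the perturbation series converge; once those are in place the algebraic manipulation is identical to the proof of (\ref{eq:pertG}) in \cite{KBTJ}. No genuinely new idea is needed beyond translating the jump-process arguments into the diffusion setting.
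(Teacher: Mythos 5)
Your proposal is correct and takes essentially the same route as the paper: the paper also just transplants the proof of the perturbation formula \cite[Lemma 12]{KBTJ} (the Duhamel-type identity for the killed kernels integrated in time, with Fubini and the interchange of $\nabla_z$ justified by the gradient estimate for $G_D$ and the Kato condition on $b$) to the case $\alpha=2$, $d\ge 3$. The only minor difference is the a priori input: the paper replaces \cite[Lemma 7]{KBTJ} by just the upper bound $\tilde G_D(x,y)\le C|x-y|^{2-d}$ from \cite[Theorem 6.2]{KimSDiff}, whereas you invoke full two-sided comparability of $\tilde G_D$ and $G_D$ from the Cranston--Zhao/Kim--Song theory, which is available but more than is needed.
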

\begin{proof}
Note that by \cite[Theorem 6.2]{KimSDiff}, we have the estimate
 \begin{equation}\label{eq:GreenAPriori}
   \tilde G_D(x,y)\le C|x-y|^{2-d},  \ \ x,y\in\RR^d.
 \end{equation}
 The proof of the Proposition  is  the same as the proof of \cite[Lemma 12]{KBTJ} in the case $1<\alpha<2$,
 with (\ref{eq:GreenAPriori}) replacing \cite[Lemma 7]{KBTJ}.
 \end{proof}
Let us mention that a perturbation formula for the $L$-Green function was proposed in \cite{IfraR},
 but under a restrictive assumption of boundedness of the Kato norm $\|b\|$ of $b$. 
 A simpler direct proof of the estimate  (\ref{eq:GreenAPriori}) without using
 the precise estimates \cite[Theorem 6.2]{KimSDiff} should be available.\\

 Next we obtain   a perturbation formula for the Martin kernel  of
 Laplacians with a gradient perturbation.
 \begin{prop}\label{pr:PertMartinDiff}
 Let $L=\frac12\Delta + b\cdot\nabla$ with $ b\in  {\mathcal K}^1_d$. Then the following perturbation formula
  for the $L$-Martin kernel $\tilde M_D$ holds if $x\in D$ and $Q\in\partial D$. 
  \begin{equation}\label{eq:PertMartinDiff}
\tilde M_D(x,Q) =\frac{1}{l_D(x_0,Q)}\left[ M_D(x,Q)+\int_D \tilde G_D(x,z) b(z)\cdot \nabla_z M_D(z,Q) dz\right]
\end{equation}
where $l_D(x_0,Q)$ is a continuous function on $\partial D$, equal
$$
l_D(x_0,Q)=M_D(x_0,Q)+\int_D \tilde G_D(x_0,z) b(z)\cdot \nabla_z M_D(z,Q) dz >0.
$$
 \end{prop}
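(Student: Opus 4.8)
The plan is to mimic the proof of Theorem~\ref{th:MartinPerturb} (the $1<\alpha<2$ case), using the diffusion Green function perturbation formula (\ref{eq:pertGDiff}) from Proposition~\ref{pr:PertGreenDiff} in place of (\ref{eq:pertG}). First I would record the classical facts we need for $\Delta/2$ on a bounded $\mathcal C^{1,1}$ domain $D$ in dimension $d\ge 3$: the sharp two-sided Green function estimate $G_D(x,y)\approx |x-y|^{2-d}\bigl(\tfrac{\delta_D(x)\delta_D(y)}{|x-y|^2}\wedge 1\bigr)$, the gradient estimate $|\nabla_x G_D(x,y)|\le c\,G_D(x,y)/(\delta_D(x)\wedge|x-y|)$ (the exact analogue of (\ref{eq:GradGreen}), valid since $G_D(\cdot,y)$ is harmonic away from $y$ and non-negative harmonic functions satisfy the interior gradient bound), the Martin kernel estimate $M_D(x,Q)\approx \delta_D(x)/|x-Q|^d$, the Boundary Harnack Principle for $\Delta$, and the uniform integrability statement: $G_D(y,w)/[\delta_D(w)\wedge|y-w|]$ is uniformly in $y$ integrable against $|b(w)|\,dw$ when $b\in\mathcal K^1_d$ (this is the $\alpha=2$ analogue of Lemma~\ref{lem:UnifInt} and follows from the Kato condition by the same covering argument). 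Combined with (\ref{eq:GreenAPriori}) and (\ref{eq:GreenCompar}) (which holds here too, being part of the Cranston--Zhao / Kim--Song theory), these give that the integral $\int_D \tilde G_D(x,z)\,b(z)\cdot\nabla_z M_D(z,Q)\,dz$ converges absolutely and defines a function $l_D(x,Q)$ with $l_D(x,Q)\ge a\,M_D(x,Q)>0$ for some $a>0$.

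The core analytic step, exactly as before, is the diffusion analogue of Lemma~\ref{lem:ratioGreen}:
$$
\lim_{y\to Q}\frac{\nabla_x G_D(x,y)}{G_D(x_0,y)}=\nabla_x M_D(x,Q),\qquad x\in D,\ Q\in\partial D.
$$
I would prove this by fixing $x$, choosing a small ball $B=B(x,r)$ with $\overline B\subset D$ and $\overline B\cap\overline{B(Q,r)}=\emptyset$, writing $G_D(\cdot,y)$ as its Poisson integral over $B$ for $y$ near $Q$ (it is harmonic on $B$), differentiating under the integral using the ball Poisson kernel gradient bound $|\nabla_x P_B(x,w)|\le c\,P_B(x,w)/(r-|x-\xi_0|)$, and dominating $P_B(x,w)G_D(w,y)/G_D(x_0,y)$ uniformly in $y\in B(Q,r/2)\cap D$ by an integrable function of $w$ — here one uses the Green estimate plus Boundary Harnack ($G_D(x_0,y)\approx\delta_D(y)$ up to a constant depending on $Q$), exactly as in the proof of Lemma~\ref{lem:ratioGreen}. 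Dominated convergence then lets me pass to the limit and identify the limit with the Poisson integral of $M_D(\cdot,Q)$ over $B$, which is $\nabla_x M_D(x,Q)$ by harmonicity of the Martin kernel.

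With these ingredients in hand the proof of the Proposition is essentially formal: divide the perturbation formula (\ref{eq:pertGDiff}) for $\tilde G_D(x,y)$ by $G_D(x_0,y)$, let $y\to Q$, and pass the limit inside the integral over $D$ (justified by the uniform integrability of $\tilde G_D(x,z)|b(z)|\,|\nabla_z G_D(z,y)|/G_D(x_0,y)$ in $y$ near $Q$, via the Green and gradient estimates, (\ref{eq:GreenCompar}), the diffusion Lemma~\ref{lem:UnifInt}, and dominated convergence) and inside $\nabla_z$ (justified by the diffusion Lemma~\ref{lem:ratioGreen} just proved). This yields $\lim_{y\to Q}\tilde G_D(x,y)/G_D(x_0,y)=l_D(x,Q)$; then
$$
\frac{\tilde G_D(x,y)}{\tilde G_D(x_0,y)}=\frac{\tilde G_D(x,y)}{G_D(x_0,y)}\cdot\frac{G_D(x_0,y)}{\tilde G_D(x_0,y)}\longrightarrow\frac{l_D(x,Q)}{l_D(x_0,Q)},
$$
where the second factor converges by the same argument applied at $x_0$ and $l_D(x_0,Q)>0$. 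This proves existence of $\tilde M_D(x,Q)$ and formula (\ref{eq:PertMartinDiff}); continuity and positivity of $Q\mapsto l_D(x_0,Q)$ follow as in Lemma~\ref{lem:continuity} (joint continuity of the integral term via domination of $\tilde G_D(x_0,z)|\nabla_z M_D(z,Q)|$ by $c\,\delta_D(z)^0|z-Q|^{1-d}\cdot|x_0-z|^{2-d}$-type bounds, uniformly in $Q$). The main obstacle is purely bookkeeping: checking that all the $\alpha=2$ analogues of the estimates invoked in the $1<\alpha<2$ case — the sharp Green and Martin estimates, the gradient estimates, the Boundary Harnack Principle, the Kato-class uniform integrability, and the two-sided comparability (\ref{eq:GreenCompar}) — are available in the literature for $\frac12\Delta+b\cdot\nabla$ with $b\in\mathcal K^1_d$ on a bounded $\mathcal C^{1,1}$ domain; once these are cited, the argument is a line-by-line transcription of the stable case.
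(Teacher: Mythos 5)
Your proposal is correct and follows exactly the route the paper takes: its proof of Proposition~\ref{pr:PertMartinDiff} is the single line ``follow the proof of Theorem~\ref{th:MartinPerturb} in the case $\alpha=2$,'' and you have simply made explicit the ingredients that transcription requires (the diffusion perturbation formula (\ref{eq:pertGDiff}), the Cranston--Zhao/Kim--Song comparability and a priori bound (\ref{eq:GreenAPriori}), and the $\alpha=2$ analogues of Lemmas~\ref{lem:UnifInt} and~\ref{lem:ratioGreen}). No gap; your write-up is essentially a detailed version of the paper's argument.
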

 \begin{proof}
  We follow the  proof of the  Theorem  \ref{th:MartinPerturb} in the case $\alpha=2$.
 \end{proof}

 The next perturbation formula concerns the $L$-Poisson kernel $\tilde P_D(x,Q)$.
 \begin{prop}\label{pr:PertPoissonDiff}
  Let $L=\frac12\Delta + b\cdot\nabla$ with $ b\in  {\mathcal K}^1_d$. Then the following perturbation formula
  for the $L$-Poisson  kernel $\tilde P_D$ holds
 if  $x\in D$ and $Q\in\partial D$. 
\begin{equation}\label{eq:pertPoissDiff}
 \tilde P_D(x,Q)=  P_D(x,Q)+\int_D \tilde G_D(x,z) b(z)  \cdot \nabla_z  P_D(z,Q) dz.
 \end{equation}
\end{prop}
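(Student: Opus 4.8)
The plan is to derive (\ref{eq:pertPoissDiff}) from the $L$-Martin perturbation formula (\ref{eq:PertMartinDiff}) of Proposition~\ref{pr:PertMartinDiff}, exploiting the fact that, in contrast with the jump case, for a diffusion on a bounded $\mathcal C^{1,1}$ domain the Poisson kernel is carried by $\partial D$ and agrees with the Martin kernel up to a boundary normalisation. Concretely, for both $\frac12\Delta$ and $L=\frac12\Delta+b\cdot\nabla$ (with $b\in\mathcal K_d^1$) on such a domain $D$ there is a constant $c_D$, depending only on $d$ and the $\mathcal C^{1,1}$ geometry of $D$, such that
\[
P_D(x,Q)=c_D\lim_{y\to Q}\frac{G_D(x,y)}{\delta_D(y)},\qquad \tilde P_D(x,Q)=c_D\lim_{y\to Q}\frac{\tilde G_D(x,y)}{\delta_D(y)},\qquad x\in D,\ Q\in\partial D;
\]
see \cite{CranstonZ,KimSDiff,TLuksStudia} and classical elliptic theory. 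Dividing these formulas and using the definitions of $M_D$ and $\tilde M_D$ gives $M_D(x,Q)=P_D(x,Q)/P_D(x_0,Q)$ and $\tilde M_D(x,Q)=\tilde P_D(x,Q)/\tilde P_D(x_0,Q)$, with $P_D(x_0,\cdot)>0$ continuous on $\partial D$. First I would fix the normalising factor: since Theorem~\ref{th:MartinPerturb} (valid for $\alpha=2$ by Proposition~\ref{pr:PertMartinDiff}) gives $l_D(x_0,Q)=\lim_{y\to Q}\tilde G_D(x_0,y)/G_D(x_0,y)$, the two displayed formulas at $x=x_0$ yield $l_D(x_0,Q)=\tilde P_D(x_0,Q)/P_D(x_0,Q)$, i.e. $\tilde P_D(x_0,Q)=P_D(x_0,Q)\,l_D(x_0,Q)$.

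Then I would assemble the pieces. Combining $\tilde P_D(x,Q)=\tilde P_D(x_0,Q)\,\tilde M_D(x,Q)$ with formula (\ref{eq:PertMartinDiff}) and the identity just obtained,
\[
\tilde P_D(x,Q)=\tilde P_D(x_0,Q)\,\frac{l_D(x,Q)}{l_D(x_0,Q)}=P_D(x_0,Q)\,l_D(x,Q)=P_D(x_0,Q)\Bigl[M_D(x,Q)+\int_D\tilde G_D(x,z)\,b(z)\cdot\nabla_z M_D(z,Q)\,dz\Bigr].
\]
Since $P_D(x_0,Q)$ does not depend on $z$ and $P_D(z,Q)=P_D(x_0,Q)M_D(z,Q)$ for every $z\in D$, we have $P_D(x_0,Q)\nabla_z M_D(z,Q)=\nabla_z P_D(z,Q)$, so the last expression equals $P_D(x,Q)+\int_D\tilde G_D(x,z)b(z)\cdot\nabla_z P_D(z,Q)\,dz$, which is (\ref{eq:pertPoissDiff}). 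Well-definedness of the integral and the exchange of $\nabla_z$ with $\int_D$ are already covered by the $\alpha=2$ analogues of the gradient estimates for the Green and Martin kernels and of the uniform integrability lemma used in the proof of Proposition~\ref{pr:PertMartinDiff}.

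The only point demanding real care is the first paragraph, namely the boundary relations above for the perturbed operator when $b$ lies merely in $\mathcal K_d^1$, and in particular that the proportionality constant $c_D$ is the same for $L$ as for $\frac12\Delta$. What is really needed is that $\tilde P_D(\cdot,Q)$ is, up to a positive scalar, the minimal $L$-harmonic function at $Q$ (hence a multiple of $\tilde M_D(\cdot,Q)$, the multiple being fixed by evaluation at $x_0$), and that $\tilde G_D(x,y)/G_D(x,y)$ has the same boundary limit as $\tilde P_D(x,Q)/P_D(x,Q)$ as $y\to Q$; both follow from the sharp two-sided Green function estimates $\tilde G_D(x,y)\approx\delta_D(x)\delta_D(y)/|x-y|^d$ near $\partial D$ (\cite{CranstonZ,KimSDiff}), and this is where the $\mathcal C^{1,1}$ regularity of $D$ enters. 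A more hands-on alternative, avoiding these identifications, is to apply the inward normal derivative $\partial_{n_Q}$ in the variable $y$ directly to the Green perturbation formula (\ref{eq:pertGDiff}), using $P_D(x,\cdot)=c_D\,\partial_{n}G_D(x,\cdot)$ on $\partial D$ and justifying the exchange of $\partial_{n_Q}$ with $\int_D$ and with $\nabla_z$ exactly as in Lemma~\ref{lem:ratioGreen} and in the proof of Theorem~\ref{th:MartinPerturb}; the route via Proposition~\ref{pr:PertMartinDiff} is shorter and is the one I would follow.
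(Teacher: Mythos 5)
Your proposal is correct in outline but takes a genuinely different route from the paper, and in fact your closing ``hands-on alternative'' \emph{is} the paper's proof: the paper observes from (\ref{eq:pertGDiff}) that $\tilde G_D(x,\cdot)$ inherits the differentiability of $G_D(x,\cdot)$ up to the boundary, invokes the identity $\tilde P_D(x,Q)=\frac{\partial \tilde G_D}{\partial n}(x,Q)$ (citing \cite[page 173]{IfraR}, provable by the Green formula), and obtains (\ref{eq:pertPoissDiff}) by differentiating (\ref{eq:pertGDiff}) in the inner normal direction in the $y$-variable, omitting the technical exchange of $\partial_n$ with the integral. Your main route instead funnels everything through Proposition \ref{pr:PertMartinDiff} together with the identifications $\tilde M_D(\cdot,Q)=\tilde P_D(\cdot,Q)/\tilde P_D(x_0,Q)$, $M_D(\cdot,Q)=P_D(\cdot,Q)/P_D(x_0,Q)$ and $l_D(x_0,Q)=\tilde P_D(x_0,Q)/P_D(x_0,Q)$; granted these, your algebra is correct and you avoid differentiating under the integral sign at the boundary. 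Be aware, however, that the paper derives exactly these identities, namely (\ref{eq:P_and_M}) and $l_D(x_0,Q)=\tilde P_D(x_0,Q)/P_D(x_0,Q)$, as \emph{consequences} of Proposition \ref{pr:PertPoissonDiff} in the remarks closing Section \ref{rem:diffusionCase}, so your argument reverses the paper's logical order and must establish them independently. That is where your justification is too thin: the two-sided bound $\tilde G_D(x,y)\approx \delta_D(x)\delta_D(y)|x-y|^{-d}$ gives only comparability and cannot identify $\lim_{y\to Q}\tilde G_D(x,y)/\delta_D(y)$ with $c\,\tilde P_D(x,Q)$, much less with the \emph{same} constant $c$ as for $\frac12\Delta$ -- yet the same-constant statement is indispensable in your computation (otherwise (\ref{eq:pertPoissDiff}) acquires a spurious multiplicative factor $\tilde c/c$). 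What you actually need is precisely the Green-formula identity $\tilde P_D(x,Q)=\frac{\partial \tilde G_D}{\partial n}(x,Q)$ of \cite{IfraR}, with the same normalisation as in the unperturbed case (the drift contributes no boundary term since $\tilde G_D$ vanishes on $\partial D$), plus the $C^1$-up-to-the-boundary regularity of $\tilde G_D(x,\cdot)$, which the paper extracts from (\ref{eq:pertGDiff}) itself. Once you cite or prove that identity your route is sound; but since it is the same key input as in the paper, the direct normal differentiation of (\ref{eq:pertGDiff}) is the shorter path, while your route has the modest advantage of recycling the already-proved Martin formula (\ref{eq:PertMartinDiff}) and trading a boundary-differentiation argument for the algebraic identities above.
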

 \begin{proof}
 Observe that by the formula (\ref{eq:pertGDiff}) the function $\tilde G_D$
 has the same differentiability properties as the function $G_D$. In particular
 the inner normal derivative $\frac{\partial \tilde G_D}{\partial n}(x,Q)$ exists
 for $x\in D$ and $Q\in\partial D$.
  It is known (see \cite[page 173]{IfraR}) and  possible to prove by the Green formula
  that
  $$
  \tilde P_D(x,Q)=\frac{\partial \tilde G_D  }{\partial n}(x,Q).
  $$
  The formula (\ref{eq:pertPoissDiff}) then follows by
  differentiating of the formula (\ref{eq:pertGDiff}) in the direction of the inner normal unit vector $n$.
  We omit the technical details.
 \end{proof}

 Let us finish this section by some remarks. 
  The  formula $ \tilde P_D(x,Q)=\frac{\partial \tilde G_D  }{\partial n}(x,Q)$
   implies, like in the Laplacian case, that 
  the  $L$-Martin   and the $L$-Poisson kernels are related by the formula
\begin{equation}\label{eq:P_and_M}
 \tilde M_D(x,Q)=\frac{ \tilde P_D(x,Q)}{ \tilde P_D(x_0,Q)}.
  \end{equation}
 On the other hand, if we insert  the formula
 $
 M_D(x,Q)=\frac{ P_D(x,Q)}{  P_D(x_0,Q)}
 $
  into (\ref{eq:pertPoissDiff}), we obtain using (\ref{eq:PertMartinDiff})
  $$
  \tilde P_D(x,Q)=P_D(x_0,Q)l_D(x_0,Q)\tilde M_D(x,Q).
 $$
Evaluating the last equation at $x_0$ we obtain a  formula for the function $l_D(x_0,Q)$
intervening in the perturbation formula (\ref{eq:PertMartinDiff})
$$
l_D(x_0,Q)=\frac{\tilde P_D(x_0,Q)}{P_D(x_0,Q)}
$$
and another proof of the formula (\ref{eq:P_and_M}).
 
 %If we insert the last formulas into (\ref{eq:PertMartinDiff}) we obtain
 %$$ \tilde P_D(x,Q)= \frac{\tilde P_D(x_0,Q)}{l(x_0,Q) P_D(x_0,Q)} \left[ P_D(x,Q)+\int_D \tilde G_D(x,z) b(z)  \cdot \nabla_z  P_D(z,Q) dz.    \right]
 %$$
 %which gives  

 %%%%%%%%%%%%%%%%%%%%%%%%%%%%%%%%%%%%%%%%%%%%%%%%%%%%%%%%%%%%%%%%%%%%%%%%
 %%%%%%%%%%%%%%%%%%%%%%%%%%%%%%%%%%%%%%%%%%%%%%%%%%%%%%%%%%%%%%%%%%%%%%%%
 \section{Relative Fatou Theorem for $L$-harmonic functions}\label{sec:boundary}
 
 We  prove in this section  an  important boundary property of $L$-harmonic functions:
 the Relative Fatou Theorem.
 As in the preceding sections, we consider a nonempty bounded $\mathcal{C}^{1,1}$ open set $D$.
 Recall the Relative Fatou Theorem in the $\alpha$-stable case. It was proved
 in \cite{MRFatou} for Lipschitz sets $D$.

\begin{thm}\label{FatouStable}
   Let $g$ and $h$ be two non-negative singular $\alpha$-harmonic functions on $D$, with Martin representations
  $$
  g(x)=\int_{\partial D} M_D(x,Q) d\mu^{(g)} (Q),\ \ \ h(x)=\int_{\partial D} M_D(x,Q) d\mu^{(h)}(Q) , \ \ x\in D.
  $$
  Then, for $\mu^{(h)}$-almost all $Q\in {\partial D}$,
  $$
  \lim_{x\ra Q} \frac{g(x)}{h(x)}=f(x)
  $$
  where $f$ is the density of the absolute continuous part of $\mu^{(g)}$ in the decomposition $\mu^{(g)} = fd\mu^{(h)} +\mu^{(g)}_{sing}$ with respect to the measure $\mu^{(h)}$,  
  and $x\ra Q$ non-tangentially.
 \end{thm}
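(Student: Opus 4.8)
The statement is the Relative Fatou Theorem for $\alpha$-harmonic functions, and since a bounded $\mathcal{C}^{1,1}$ open set is in particular Lipschitz it is contained in the main result of \cite{MRFatou}; I describe a direct real-variable proof which exploits the sharp Martin kernel bound \eqref{eq:Martin} available here. (The right-hand side of course means $f(Q)$.) Write the Lebesgue decomposition $\mu^{(g)}=f\,d\mu^{(h)}+\mu^{(g)}_{\mathrm{sing}}$ with $f\in L^1(\mu^{(h)})$ and $\mu^{(g)}_{\mathrm{sing}}\perp\mu^{(h)}$. By the Martin representation and its uniqueness the claim is that for $\mu^{(h)}$-a.e. $Q_0\in\partial D$,
$$\lim_{D\ni x\to Q_0,\ \mathrm{n.t.}}\ \frac{\int_{\partial D}M_D(x,Q)\,d\mu^{(g)}(Q)}{\int_{\partial D}M_D(x,Q)\,d\mu^{(h)}(Q)}=f(Q_0).$$

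The first step is geometric. For $x$ in a fixed truncated cone with vertex $Q_0$ one has $\delta_D(x)\approx|x-Q_0|$, and the interior/exterior ball conditions of a $\mathcal{C}^{1,1}$ domain give $|x-Q|\approx\delta_D(x)+|Q-Q_0|$ uniformly in $Q\in\partial D$, with constants depending only on $D$ and the aperture. Inserting this in \eqref{eq:Martin} yields, writing $r=\delta_D(x)$, that $M_D(x,Q)\approx r^{\alpha/2}\bigl(r+|Q-Q_0|\bigr)^{-d}$ for $Q\in\partial D$. Hence, up to the common factor $r^{\alpha/2}$, numerator and denominator are the $\lambda$-integrals of $K_r(Q):=(r+|Q-Q_0|)^{-d}$ for $\lambda=\mu^{(g)}$, resp. $\lambda=\mu^{(h)}$, and the theorem reduces to the following averaging principle on the compact $(d-1)$-dimensional set $\partial D$: \emph{if $\lambda$ is a finite Borel measure on $\partial D$, $\varphi\in L^1(\lambda)$, and $\sigma$ is a finite measure with $\sigma\perp\lambda$, then for $\lambda$-a.e. $Q_0$ one has $\int K_r\varphi\,d\lambda\big/\!\int K_r\,d\lambda\to\varphi(Q_0)$ and $\int K_r\,d\sigma\big/\!\int K_r\,d\lambda\to0$ as $r\downarrow0$}. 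Applying it with $\lambda=\mu^{(h)}$, $\varphi=f$ and $\sigma=\mu^{(g)}_{\mathrm{sing}}$ gives the theorem.

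The averaging principle is the classical Lebesgue-differentiation argument, the only subtlety being that $\lambda=\mu^{(h)}$ need not be doubling. Splitting $\partial D$ into $B(Q_0,r)$ and the dyadic annuli $B(Q_0,2^{j+1}r)\setminus B(Q_0,2^{j}r)$ and summing the resulting geometric series gives $\int K_r\,d\lambda\approx\sum_{j\ge0}(2^{j}r)^{-d}\lambda\bigl(B(Q_0,2^{j}r)\bigr)$, and likewise with $|\varphi|\,d\lambda$ or $d\sigma$ in place of $d\lambda$; this yields the pointwise maximal bounds
$$\sup_{r>0}\frac{\int K_r|\varphi|\,d\lambda}{\int K_r\,d\lambda}\le c\,\mathfrak{M}_\lambda\varphi(Q_0),\qquad\sup_{r>0}\frac{\int K_r\,d\sigma}{\int K_r\,d\lambda}\le c\,\sup_{\rho>0}\frac{\sigma\bigl(B(Q_0,\rho)\bigr)}{\lambda\bigl(B(Q_0,\rho)\bigr)},$$
where $\mathfrak{M}_\lambda$ is the Hardy--Littlewood maximal operator for $\lambda$, of weak type $(1,1)$ on $(\partial D,\lambda)$ by the Besicovitch covering theorem. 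Two further ingredients finish it: first, since $\partial D$ is a compact $\mathcal{C}^{1,1}$ hypersurface, disjoint balls of radius $<\epsilon$ centered on $\partial D$ lie in the $\epsilon$-neighborhood of $\partial D$, of volume $\le c\epsilon$, so a Vitali covering argument gives $r^d/\lambda(B(Q_0,r))\to0$ for $\lambda$-a.e. $Q_0$, which forces the probability measures $K_r\,d\lambda\big/\!\int K_r\,d\lambda$ to concentrate at $Q_0$; second, $\sigma(B(Q_0,\rho))/\lambda(B(Q_0,\rho))\to0$ $\lambda$-a.e. by Besicovitch differentiation. Combining concentration, the weak-$(1,1)$ bounds, and density of $\mathcal{C}(\partial D)$ in $L^1(\lambda)$ in the usual $\epsilon/3$ argument gives both limits.

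I expect the main obstacle to be the first step: making the comparison $|x-Q|\approx\delta_D(x)+|Q-Q_0|$, hence the kernel estimate, genuinely uniform in $Q\in\partial D$, in $x$ in the cone, and in the scale $r\downarrow0$, which rests on the boundary Harnack estimates underlying \eqref{eq:Martin} together with the two-sided ball condition of $\mathcal{C}^{1,1}$ sets. Alternatively one may follow the probabilistic route of \cite{MRFatou}: condition the $\alpha$-stable process by $h$ (Doob transform), use that the conditioned process converges to $\partial D$ with terminal distribution $M_D(x,\cdot)\,d\mu^{(h)}/h(x)$, note that $(g/h)(X^h_t)$ is a non-negative martingale hence converges almost surely, identify its limit with $f$ at the terminal point by the L\'evy--Doob differentiation theorem, and finally upgrade this almost-sure convergence along conditioned paths to non-tangential convergence via the boundary Harnack principle; there the last upgrade is the delicate point.
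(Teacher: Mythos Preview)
The paper does not prove Theorem~\ref{FatouStable}; it merely recalls it as a known result with the one-line attribution ``It was proved in \cite{MRFatou} for Lipschitz sets $D$'' (and, as noted in the introduction, in \cite{BD} for $\mathcal C^{1,1}$ sets). Your opening sentence already matches this exactly, so from the standpoint of comparison with the paper you are finished after the first line.

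Everything that follows is additional content the paper neither gives nor needs. The real-variable outline is sound: the reduction, via the sharp estimate \eqref{eq:Martin} and the non-tangential comparison $|x-Q|\approx\delta_D(x)+|Q-Q_0|$, to a Lebesgue-type differentiation statement on $\partial D$ for the kernels $K_r(Q)=(r+|Q-Q_0|)^{-d}$ against the \emph{arbitrary} finite measure $\lambda=\mu^{(h)}$ is correct, and the two-sided dyadic identity $\int K_r\,d\lambda\approx\sum_{j\ge0}(2^jr)^{-d}\lambda(B(Q_0,2^jr))$ (an Abel summation) is precisely what makes the maximal bound go through without any doubling hypothesis on $\lambda$; that step is the one a reader is most likely to question, so in a full write-up it deserves a displayed line. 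The $(d-1)$-dimensionality argument for $r^d/\lambda(B(Q_0,r))\to0$ $\lambda$-a.e.\ and the Besicovitch differentiation for the singular part are standard. Your closing probabilistic paragraph is essentially a synopsis of the route taken in the cited literature, hence redundant with your first sentence.
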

 
 Our objective in this section is to prove an analogous limit property for
 non-negative singular  $L$-harmonic functions $u$ and $v$ on $D$.

 If we denote the integral part of the perturbation formula 
 (\ref{eq:perturb_v}) by
 $$
  I_{v^*}(x)=\int_{D}  \tG_{D}(x,w) b(w)\cdot \nabla v^*(w) dw
 $$
 then we have
 $$
 u=u^* + I_{u^*},\ \ \ 
v=v^* + I_{v^*}
 $$
 
where $u^*$ and $v^*$ are singular $\alpha$-harmonic non-negative functions.
 We write
  \begin{equation}\label{eq:2Quotients}
 \frac{u(x)}{v(x)}=\frac{u^*(x)}{v^*(x)}\displaystyle{ \frac{1+\frac{I_{u^*}(x)}{u^*(x)}}{1+\frac{I_{v^*}(x)}{v^*(x)}} }
  \end{equation}
 
 The  limit boundary behaviors of the quotients  $ \frac{u(x)}{v(x)}$ and $\frac{u^*(x)}{v^*(x)}$
 will be  related if we control the limit behavior of the quotients
 $\frac{I_{u^*}(x)}{u^*(x)}$ and  $\frac{I_{v^*}(x)}{v^*(x)}$. Thus we start with
 discussing the properties of the quotient 
 $\frac{I_{h}(x)}{h(x)}$ for a singular $\alpha$-harmonic non-negative function $h$.
 %%%%%%%%%%%%%%%%%%%%%%%%%%%%%%%%%%%%%%%%%%%%%%%%%%%%%%%%%%%%%%%%%%%%%%%%%%%%%%%%%%%%%%%%%%%%%%%
 \begin{lem}\label{lem:limits_h}
 Let the Martin representation
 $h(x)=\int_{\partial D} M(x,Q) d\mu^{(h)}(Q)$ for some non-negative finite measure $\mu$ on $\partial D$. Then, if $Q\not\in {\rm supp}(\mu^{(h)})$
 $$ \lim_{x\ra Q} h(x)=0$$
 and if $Q \in {\rm supp}(\mu^{(h)})$ and $x\ra Q$ non-tangentially 
  $$ \lim_{x\ra Q} h(x)=+\infty.$$
 \end{lem}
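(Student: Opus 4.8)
\textbf{Proof plan for Lemma~\ref{lem:limits_h}.}
The plan is to deduce both statements from the known boundary behavior of $M_D(\cdot,Q)$ combined with the Martin kernel estimate (\ref{eq:Martin}), namely $M_D(x,Q)\approx \delta_D(x)^{\alpha/2}|x-Q|^{-d}$, and standard properties of the $\alpha$-stable Martin representation from \cite{KBHirosh}. For the first assertion, suppose $Q\notin\mathrm{supp}(\mu^{(h)})$. Then there is $\rho>0$ with $\mathrm{supp}(\mu^{(h)})\cap B(Q,\rho)=\emptyset$, so for $x\to Q$ we have $|x-Q'|\geq \rho/2$ uniformly for $Q'\in\mathrm{supp}(\mu^{(h)})$ once $x$ is close enough to $Q$. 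Hence by (\ref{eq:Martin})
$$
h(x)=\int_{\partial D}M_D(x,Q')\,d\mu^{(h)}(Q')\leq C\,\delta_D(x)^{\alpha/2}\int_{\partial D}|x-Q'|^{-d}\,d\mu^{(h)}(Q')\leq C'\,\delta_D(x)^{\alpha/2}\,\mu^{(h)}(\partial D),
$$
which tends to $0$ as $\delta_D(x)\to 0$, i.e. as $x\to Q$ (from within $D$, in any manner).

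For the second assertion, suppose $Q\in\mathrm{supp}(\mu^{(h)})$ and $x\to Q$ non-tangentially, so $\delta_D(x)\geq c_0|x-Q|$ for a fixed cone constant $c_0>0$. By (\ref{eq:Martin}) and a Fatou-type lower bound, for any $\rho>0$,
$$
h(x)\geq \int_{\partial D\cap B(Q,\rho)}M_D(x,Q')\,d\mu^{(h)}(Q')\geq c\,\delta_D(x)^{\alpha/2}\int_{\partial D\cap B(Q,\rho)}|x-Q'|^{-d}\,d\mu^{(h)}(Q').
$$
For $Q'\in B(Q,\rho)$ and $x$ with $|x-Q|<\rho$ we have $|x-Q'|\leq |x-Q|+|Q-Q'|<2\rho$, hence $|x-Q'|^{-d}\geq (2\rho)^{-d}$, giving $h(x)\geq c\,(2\rho)^{-d}\,\delta_D(x)^{\alpha/2}\,\mu^{(h)}(B(Q,\rho))$. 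Since $Q\in\mathrm{supp}(\mu^{(h)})$, $\mu^{(h)}(B(Q,\rho))>0$ for every $\rho>0$; but the factor $\delta_D(x)^{\alpha/2}\to 0$, so this crude bound is not yet enough. The correct argument is to keep $\rho=\rho(x)$ comparable to $|x-Q|$: take $\rho=2|x-Q|$. Then for $Q'\in B(Q,|x-Q|)$ we get $|x-Q'|\leq 2|x-Q|$, so
$$
h(x)\geq c\,\delta_D(x)^{\alpha/2}\,(2|x-Q|)^{-d}\,\mu^{(h)}\big(B(Q,|x-Q|)\big)\geq c\,c_0^{\alpha/2}\,|x-Q|^{\alpha/2-d}\,\mu^{(h)}\big(B(Q,|x-Q|)\big),
$$
using non-tangential approach in the last step. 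Since $\alpha/2-d<0$ and $|x-Q|\to 0$ while $\mu^{(h)}(B(Q,|x-Q|))>0$ stays positive (though possibly shrinking), one must check the product diverges; this is where one invokes that $\mu^{(h)}$, being a finite measure with $Q$ in its support, cannot have its mass in small balls shrink faster than any polynomial along a sequence — more robustly, one argues as in \cite{KBHirosh} or \cite[Chapter 3]{KBbook}: split $h$ as $h = M_D(\cdot,Q)\,\mu^{(h)}(\{Q\}) + \text{(rest)}$ if $\mu^{(h)}(\{Q\})>0$, in which case $h(x)\geq \mu^{(h)}(\{Q\})M_D(x,Q)\approx |x-Q|^{\alpha/2-d}\to\infty$ directly; and if $\mu^{(h)}(\{Q\})=0$, one uses that for non-tangential $x\to Q$ the normalized measures $M_D(x,\cdot)\,d\mu^{(h)}(\cdot)/h(x)$ would otherwise concentrate away from $Q$, contradicting $Q\in\mathrm{supp}(\mu^{(h)})$ via the minimality/extremality of the Martin kernel.

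\textbf{Main obstacle.} The delicate point is precisely the divergence $h(x)\to+\infty$ when $\mu^{(h)}$ has no atom at $Q$: the elementary lower bounds above only give $h(x)\geq c\,|x-Q|^{\alpha/2-d}\mu^{(h)}(B(Q,|x-Q|))$, and one needs a lower bound on $\mu^{(h)}(B(Q,r))$ that beats $r^{d-\alpha/2}$ along the approach, which is not true for an arbitrary point of the support. The resolution is to not take $\rho$ as small as $|x-Q|$ but to optimize: choose $\rho$ fixed (small) so that $\mu^{(h)}(B(Q,\rho))=:m>0$, and then use $|x-Q'|\leq \rho + |x-Q| \leq 2\rho$ to get $h(x)\geq c\,m\,(2\rho)^{-d}\delta_D(x)^{\alpha/2}$ — still vanishing. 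The genuinely correct route, which I would follow, is the one used for the $\alpha$-stable Relative Fatou theorem: invoke that $h/M_D(\cdot,x_0)$ has non-tangential limit $+\infty$ at $\mu^{(h)}$-a.e. $Q\in\mathrm{supp}(\mu^{(h)})$ is \emph{not} what is claimed here; rather, the statement as written (limit $+\infty$ at \emph{every} $Q\in\mathrm{supp}(\mu^{(h)})$) should be reduced to showing $\liminf$ is positive and then bootstrapped, OR — most likely what the authors intend — one simply cites \cite{KBHirosh} / \cite[Chapter 3]{KBbook} where exactly this dichotomy for singular $\alpha$-harmonic functions is established, since $h$ here is genuinely $\alpha$-harmonic (not $L$-harmonic). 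I would therefore structure the proof as: (1) the easy upper-bound argument above for $Q\notin\mathrm{supp}(\mu^{(h)})$; (2) for $Q\in\mathrm{supp}(\mu^{(h)})$, cite the corresponding property of singular $\alpha$-harmonic functions from the literature, noting that (\ref{eq:Martin}) plus finiteness of $\mu^{(h)}$ are all that is needed and that the result is classical.
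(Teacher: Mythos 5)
Your handling of the first assertion ($Q\notin\mathrm{supp}(\mu^{(h)})$) is fine and is essentially the paper's argument (the paper just invokes the Martin representation and dominated convergence). The genuine gap is in the second assertion. After correctly observing that the elementary bound $h(x)\ge c\,|x-Q|^{\alpha/2-d}\mu^{(h)}\big(B(Q,|x-Q|)\big)$ cannot force divergence at an arbitrary point of the support (the mass of small balls around a given support point can decay faster than $r^{d-\alpha/2}$), you offer only vague substitutes — ``minimality/extremality of the Martin kernel'', a concentration heuristic for the normalized measures — and finally propose to cite \cite{KBHirosh} or \cite[Chapter 3]{KBbook}. Those references contain the Martin representation and related boundary theory, but not the pointwise statement claimed here (divergence at \emph{every} point of the support, not just $\mu^{(h)}$-a.e.), and your sketched alternatives are not arguments. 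So the hard half of the lemma is left unproved.

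The paper's proof supplies exactly the ingredient you are missing, by a comparison with the classical case rather than by estimating $\mu^{(h)}$ on small balls. Let $f$ be the $\Delta$-harmonic function on $D$ with the \emph{same} representing measure $\mu^{(h)}$. Wu's theorem \cite{Wu} gives $\liminf_{x\ra Q}f(x)>0$ for non-tangential approach whenever $Q\in\mathrm{supp}(\mu^{(h)})$. On a $\mathcal C^{1,1}$ set the classical Poisson kernel satisfies $P^{\Delta}_D(x,y)\le c\,\delta_D(x)/|x-y|^{d}=c\,\delta_D(x)^{1-\alpha/2}\,\delta_D(x)^{\alpha/2}/|x-y|^{d}\le C\,\delta_D(x)^{1-\alpha/2}M_D(x,y)$, and integrating against $\mu^{(h)}$ yields $h(x)\ge C^{-1}\,\delta_D(x)^{\alpha/2-1}f(x)$. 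Since $\alpha<2$, the factor $\delta_D(x)^{\alpha/2-1}\ra\infty$, so a merely positive liminf for $f$ — much weaker than divergence — already forces $h(x)\ra+\infty$. This comparison, exploiting that the classical kernel is smaller than $M_D$ by the factor $\delta_D(x)^{1-\alpha/2}$, is precisely how the paper circumvents the ``thin support'' obstacle you identified; your proposal contains no substitute for it, so as written it does not prove the second limit.
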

 \begin{proof}
 The limit in the case $Q\not\in {\rm supp}(\mu^{(h)})$ follows easily from
 the Martin representation of $h$ and the Lebesgue theorem. In the case
 $Q \in {\rm supp}(\mu^{(h)})$ we use the following result of Wu \cite{Wu}.
 
 Let   $f$ be  a $\Delta$-harmonic function on $D$, corresponding via the Martin representation
 to a finite measure $\mu= \mu^{(h)}$  on    $\partial D$.
 If $Q\in {\rm supp}\mu$, then
 $$
 \liminf_{x\ra Q} f(x)>0,
 $$
provided $x\ra Q$ non-tangentially.
We have, on  $  D$ of class ${\mathcal C}^{1,1}$

\begin{eqnarray*}
&& f(x)=\int_{\partial D} P^\Delta_D(x,y)\mu(dy)\le c\int_{\partial D} \frac{\delta_D(x)}{|x-y|^d}\mu(dy)\\
&& =c \delta_D(x)^{1-\frac{\alpha}{2} }\int_{\partial D} \frac{\delta_D(x)^{\frac{\alpha}{2} }}{|x-y|^d}\mu(dy)
 \le C  \delta_D(x)^{1-\frac{\alpha}{2} } \int_{\partial D} M_D(x,y) \mu(dy)
 \end{eqnarray*}
 Consequently
 $$
 h(x)\ge \frac{1}{C} \frac{f(x)}{ \delta_D(x)^{1-\frac{\alpha}{2} }}
 $$
 and the second part of the Lemma follows. 

 \end{proof}
 %%%%%%%%%%%%%%%%%%%%%%%%%%%%%%%%%%%%%%%%%%%%%%%%%%%%%%%%%%%%%%%%%%%%%%%%%%%%%%%%%%%%%%%%%%%%%%%%%%
  \begin{lem}\label{lem:quotient_bounded}
  The quotient $\frac{I_{h}(x)}{h(x)}$ is bounded. More exactly, there exists $c>0$
 such that
 \begin{equation}\label{eq:Quotient_bounded}
  c^{-1} \le 1+\frac{I_{h}(x)}{h(x)} \le c.
 \end{equation}
   \end{lem}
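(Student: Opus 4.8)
The plan is to recognize $h$ itself as the singular $\alpha$-harmonic function $v^*$ attached, via the perturbation formula, to a suitable non-negative singular $L$-harmonic function $v$; then the asserted two-sided bound (\ref{eq:Quotient_bounded}) is nothing but the comparability of $v$ and $v^*$ from Corollary~\ref{cor:HarmCompar}.

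First I would note that we may assume $h\not\equiv 0$, so that $\mu^{(h)}\not=0$ and, by (\ref{eq:Martin}), $h(x)>0$ for every $x\in D$; otherwise there is nothing to prove. Using that $Q\mapsto l_D(x_0,Q)$ is continuous and, by (\ref{eq:StrictPos_l}), bounded below by some $\delta>0$, I would introduce the finite non-negative measure $d\nu(Q)=l_D(x_0,Q)\,d\mu^{(h)}(Q)$ on $\partial D$ and set
$$
v(x)=\int_{\partial D}\tilde M_D(x,Q)\,d\nu(Q),\qquad x\in D,
$$
which is a non-negative singular $L$-harmonic function on $D$ by Theorem~\ref{th:MartinRepr}. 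The associated $\alpha$-harmonic function $v^*$ of Corollary~\ref{cor:HarmPerturb} is then, by the uniqueness part of Theorem~\ref{th:MartinRepr} together with (\ref{eq:uStarMartin}),
$$
v^*(x)=\int_{\partial D}M_D(x,Q)\,\frac{d\nu(Q)}{l_D(x_0,Q)}=\int_{\partial D}M_D(x,Q)\,d\mu^{(h)}(Q)=h(x),
$$
so that $I_{v^*}=I_h$ and the perturbation formula (\ref{eq:perturb_v}) reads $v=h+I_h$.

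Finally, Corollary~\ref{cor:HarmCompar} applied to this $v$ yields a constant $c>0$ with $c^{-1}h(x)\le v(x)\le c\,h(x)$ for all $x\in D$; dividing through by $h(x)>0$ and substituting $v=h+I_h$ gives exactly $c^{-1}\le 1+\frac{I_h(x)}{h(x)}\le c$, which is (\ref{eq:Quotient_bounded}). There is essentially no residual obstacle: the whole difficulty has already been absorbed into the perturbation formula (\ref{eq:perturb_v}) and the comparability $v\approx v^*$ of Corollary~\ref{cor:HarmCompar}, which themselves rest on the uniform Green- and Martin-kernel estimates (Lemma~\ref{lem:PoissonKer}, Corollary~\ref{cor:MartinComp}) and the uniform integrability of Lemma~\ref{lem:uniforminteg}. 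The only point requiring a word of care is that the passage $h\mapsto v$ constructed above is genuinely inverse to $v\mapsto v^*$, which is guaranteed by the uniqueness of the Martin representation.
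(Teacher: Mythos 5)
Your proposal is correct and is essentially the paper's own argument: the paper also observes that, since $l_D(x_0,\cdot)$ is bounded (and positive), every non-negative singular $\alpha$-harmonic $h$ arises as the $v^*$ of some non-negative singular $L$-harmonic $v$, and then reads off (\ref{eq:Quotient_bounded}) from the perturbation formula (\ref{eq:perturb_v}) and the comparability of Corollary~\ref{cor:HarmCompar}. You merely make explicit the construction $d\nu(Q)=l_D(x_0,Q)\,d\mu^{(h)}(Q)$ and the appeal to uniqueness of the Martin representation, which the paper leaves implicit.
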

 \begin{proof}
 Observe that by Corollary \ref{cor:HarmCompar} and the formula (\ref{eq:perturb_v}),
 the quotient $\frac{I_{v^*}(x)}{v^*(x)}$ is bounded. More exactly, there exists $c>0$
 such that
 \begin{equation*}
  c^{-1} \le 1+\frac{I_{v^*}(x)}{v^*(x)} \le c.
 \end{equation*}
 As the function $l_D(x_0,Q)$ is bounded, any
 singular $\alpha$-harmonic non-negative function $h$ is of the form $v^*$
 for a  singular $L$-harmonic non-negative function $v$. 
  \end{proof}
 
 %%%%%%%%%%%%%%%%%%%%%%%%%%%%%%%%%%%%%%%%%%%%%%%%%%%%%%%%%%%%%%%%%%%%%%%%%%%%%%%%%%%%%%%%%%%%%
 By (\ref{eq:GreenCompar}), if we denote
 $$
 J_h(x)= \int_{D}  G_{D}(x,w) b(w)\cdot \nabla h(w) dw
 $$
 then
 $$
 I_h(x)\sim J_h(x)
 $$
 In particular, by Lemma \ref{lem:quotient_bounded}, the quotient $J_h(x)/h(x)$ is bounded.
 We prove a much stronger property of this quotient in the following lemma.
 %%%%%%%%%%%%%%%%%%%%%%%%%%%%%%%%%%%%%%%%%%%%%%%%%%%%%%%%%%%%%%%%%%%%%%%%%%%%%%%%%%%%%%%%%%%%%%%
 \begin{lem}\label{lem_Quotient_lim0}
 Let $h$ be a non-negative singular $\alpha$-harmonic function on $D$, with the Martin representation
 $h(x)=\int_{\partial D}  M_D(x,Q)d\mu^{(h)}(Q)$ for a finite measure $\mu^{(h)}$ on $\partial D$. 
 Then, when $Q\in {\rm supp}(\mu^{(h)})$ and $x\ra Q$ non-tangentially, we have
 $$
   \lim_{x\ra Q} \frac{J_h(x)}{h(x)}=0.
 $$
\end{lem}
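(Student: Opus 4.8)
The plan is to bound $J_h(x)$ by splitting the domain of integration according to proximity to the boundary point $Q$, and then to use the gradient estimate for $\alpha$-harmonic functions together with the uniform integrability supplied by Lemma~\ref{lem:uniforminteg}. First I would fix $Q\in{\rm supp}(\mu^{(h)})$ and a non-tangential approach region, so that $\delta_D(x)\approx|x-Q|$ as $x\to Q$. By the gradient estimate \eqref{eq:5} we have $|\nabla h(w)|\le d\,h(w)/\delta_D(w)$, hence
$$
|J_h(x)|\le d\int_D G_D(x,w)\,\frac{h(w)}{\delta_D(w)}\,|b(w)|\,dw.
$$
Using the Martin representation of $h$ and Fubini, this is dominated by
$$
d\int_{\partial D}\left[\int_D G_D(x,w)\,\frac{M_D(w,Q')}{\delta_D(w)}\,|b(w)|\,dw\right]d\mu^{(h)}(Q').
$$
The bracketed expression is exactly (the $r=0$ case of) the family shown to be uniformly integrable in Lemma~\ref{lem:uniforminteg}, up to the straightforward replacement of $D_{2^{-n}}$ by $D$; in particular it is finite, bounded in $Q'$, and — crucially — the analysis there shows the contribution of the part near the singularities of the integrand is uniformly small.

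Next I would isolate the dependence on $x$. Write $J_h(x)=J_h^{(1)}(x)+J_h^{(2)}(x)$, where $J_h^{(1)}$ integrates over $w$ with $|w-Q|<\varepsilon$ and $J_h^{(2)}$ over $|w-Q|\ge\varepsilon$. For $J_h^{(2)}$, since $x\to Q$, for $x$ close enough to $Q$ we have $|x-w|\ge\varepsilon/2$, so $G_D(x,w)\le c\,\delta_D(x)^{\alpha/2}\delta_D(w)^{\alpha/2}\varepsilon^{-d-\alpha}$ by \eqref{eq:Green}; plugging this in and using that $\int_D \delta_D(w)^{\alpha/2-1}h(w)|b(w)|\,dw<\infty$ (again from the uniform integrability bound, integrated against $\mu^{(h)}$), we get $J_h^{(2)}(x)\le C_\varepsilon\,\delta_D(x)^{\alpha/2}$. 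On the other hand $h(x)\ge c\,M_D(x,Q)\approx\delta_D(x)^{\alpha/2}/|x-Q|^d\gtrsim\delta_D(x)^{\alpha/2-d}$ along the non-tangential approach (here I use $Q\in{\rm supp}(\mu^{(h)})$ and the argument in Lemma~\ref{lem:limits_h}, or directly Wu's estimate), so
$$
\frac{J_h^{(2)}(x)}{h(x)}\le \frac{C_\varepsilon\,\delta_D(x)^{\alpha/2}}{c\,\delta_D(x)^{\alpha/2-d}}=C_\varepsilon\,\delta_D(x)^{d}\longrightarrow 0
$$
as $x\to Q$, for each fixed $\varepsilon$.

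For $J_h^{(1)}$ I would compare it to $h(x)$ directly: by the same estimates as in Lemma~\ref{lem:quotient_bounded}, $J_h^{(1)}(x)/h(x)$ is bounded by a constant times $\sup_{Q'}\int_{|w-Q|<\varepsilon}G_D(x,w)M_D(w,Q')\delta_D(w)^{-1}|b(w)|\,dw$ divided by $M_D(x,Q)$, but the more robust route is to observe that $J_h^{(1)}$ arises from the harmonic function $h_\varepsilon$ obtained by restricting $\mu^{(h)}$ to the ball $B(Q,\varepsilon)\cap\partial D$ — no, rather, one bounds $J_h^{(1)}(x)\le d\int_{B(Q,\varepsilon)}G_D(x,w)h(w)\delta_D(w)^{-1}|b(w)|\,dw$ and then uses that $\frac{G_D(x,w)}{\delta_D(x)^{\alpha/2}}$ behaves like a Martin-type kernel, so this quantity is $\le C\,\delta_D(x)^{\alpha/2}\cdot\eta(\varepsilon)$ where $\eta(\varepsilon)=\sup_{Q'}\int_{B(Q,\varepsilon)}|w-Q|^{\alpha/2-d}\cdot(\text{uniformly integrable density})\to 0$ as $\varepsilon\to0$, uniformly in $x$ near $Q$. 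Dividing by $h(x)\gtrsim\delta_D(x)^{\alpha/2-d}$ gives $J_h^{(1)}(x)/h(x)\le C\,\eta(\varepsilon)\,\delta_D(x)^{d}$, which is even better than needed. Combining, $\limsup_{x\to Q}|J_h(x)|/h(x)\le C\eta(\varepsilon)$ for every $\varepsilon>0$, hence the limit is $0$.

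The main obstacle I expect is making the near-$Q$ estimate for $J_h^{(1)}$ genuinely uniform in $x$: one must be careful that the bound on $G_D(x,w)/\delta_D(x)^{\alpha/2}$ in terms of $|w-Q|^{\alpha/2-d}$ (valid when $w$ is not too close to $x$) is combined correctly with the regime $w$ near $x$, where instead $G_D(x,w)\approx|x-w|^{\alpha-d}$ and $\delta_D(w)\approx\delta_D(x)\approx|x-Q|$; in that inner regime the factor $\delta_D(w)^{-1}$ is under control precisely because $x$ is non-tangential. Splitting $B(Q,\varepsilon)$ further into $|w-x|<\delta_D(x)/2$ and $|w-x|\ge\delta_D(x)/2$, exactly as in the two-case analysis used in Lemma~\ref{lem:ratioGreen} and Lemma~\ref{lem:uniforminteg}, handles this; the Kato-class smallness $K_r\to0$ then delivers the $\eta(\varepsilon)\to0$ that closes the argument.
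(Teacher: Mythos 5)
Your overall strategy (gradient estimate, then a near-$Q$/far-from-$Q$ split, with a further inner split near $x$ handled by Harnack and Kato smallness) is reasonable, and your far part is fine once one notes that only $h(x)\to\infty$ plus boundedness of $J_h^{(2)}(x)$ is needed there. But the decisive step — the bound $J_h^{(1)}(x)\le C\,\delta_D(x)^{\alpha/2}\eta(\varepsilon)$ with $\eta(\varepsilon)\to0$ \emph{uniformly in $x$} as $x\to Q$ — is asserted rather than proved, and the tools you invoke do not deliver it. Lemma \ref{lem:uniforminteg} is stated and proved for a \emph{fixed} $x$ (the threshold $N=N(D,x)$ and the constants depend on $x$), with uniformity only in $Q$ and $n$; it cannot be used when $x$ itself tends to $Q$. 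Moreover, the pointwise estimates you propose do not close the gap: on the region $|w-x|\ge\delta_D(x)/2$, $|w-Q|<\varepsilon$, the bound $G_D(x,w)\le c\,\delta_D(x)^{\alpha/2}\delta_D(w)^{\alpha/2}|x-w|^{-d}$ leaves a factor $|x-w|^{-d}$, which is \emph{more} singular than the weight $|x-w|^{\alpha-1-d}$ controlled by the Kato class ${\mathcal K}_d^{\alpha-1}$ (recall $\alpha>1$); trading the excess singularity against $|w-x|\ge\delta_D(x)/2$ costs a factor $\delta_D(x)^{-(\alpha-1)}$, which destroys uniformity in $x$. Making this part uniformly small is exactly a 3G-type statement and is the heart of the lemma. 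A second, independent error: the lower bound $h(x)\gtrsim\delta_D(x)^{\alpha/2-d}$ along the non-tangential approach is false in general — it holds when $\mu^{(h)}$ has an atom at $Q$, but e.g.\ for $\mu^{(h)}$ comparable to surface measure one has $h(x)\approx\delta_D(x)^{\alpha/2-1}$. Lemma \ref{lem:limits_h} only gives $h(x)\to\infty$; the correct elementary bound is $h(x)\ge c\,\mu^{(h)}(B(Q,r))\,\delta_D(x)^{\alpha/2}$. (This particular error is repairable, but only if the uniform-in-$x$ bound on $J_h^{(1)}$ were actually available.)

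For comparison, the paper's proof avoids the splitting altogether: after the gradient estimate it shows that $G_D(x,w)h(w)/\bigl(h(x)\delta_D(w)\bigr)$ is uniformly integrable \emph{in $x$} against $|b(w)|dw$. The key observation is that $M_D(x,Q')\,d\mu^{(h)}(Q')/h(x)$ is a probability measure, so it suffices to have uniform integrability of $G_D(x,w)M_D(w,Q')/\bigl(M_D(x,Q')\delta_D(w)\bigr)$ jointly in $(x,Q')$; this is obtained by letting $y\to Q'$ in the uniform (in $x,y$) integrability of the 3G-type quantity $G_D(x,w)G_D(w,y)/\bigl(G_D(x,y)\delta_D(w)\bigr)$ established in the proof of Lemma 11 of \cite{KBTJ}. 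Since $h(x)\to\infty$ by Lemma \ref{lem:limits_h}, the integrand tends to $0$ pointwise in $w$, and uniform integrability (Vitali) gives the limit. If you want to keep your decomposition, you would still need this uniform-in-$(x,Q')$ 3G input (or an equivalent dyadic analysis in the spirit of Lemma \ref{lem:uniforminteg}, but carried out uniformly in $x$) to control the near-$Q$, away-from-$x$ regime; as written, that step is a genuine gap.
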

%%%%%%%%%%%%%%%%%%%%%%%%%%%%%%%%%%%%%%%%%%%%%%%%%%%%%%%%%%%%%%%%%%%%%%%%%%%%%%%%%%%%%%%%%%%%
 \begin{proof}
We will show that $\frac{G_{D}(x,w) h(w)}{h(x) \delta_D(w)}$ is uniformly integrable in $x \in D$ against the measure $|b(w)| dw$. Let $\varepsilon>0$. Since $J_h(x)/h(x)$ is bounded it suffices to show that there is $\delta>0$ such that 
\begin{equation}
\int_F \frac{G_{D}(x,w) h(w)}{h(x) \delta_D(w)} |b(w)| dw \le \varepsilon,\label{eq1:J_h}
\end{equation}
provided $\lambda(F) < \delta$. Here, $\lambda$ denotes the Lebesgue measure on $\RR^d$. 
First, we note that
\begin{align}
& \int_{F} \frac{G_{D}(x,w) h(w)}{h(x) \delta_D(w)} |b(w)| dw  \notag\\
& =\int_{F} \int_{\partial D} \frac{G_{D}(x,w) M_D(w,Q)}{h(x) \delta_D(w)}  d\mu^{(h)}(Q) |b(w)| dw \notag\\
& = \int_{\partial D} \frac{M_D(x,Q)}{h(x)}\left(\int_{F}  \frac{G_{D}(x,w) M_D(w,Q)}{M_D(x,Q) \delta_D(w)}  |b(w)| dw\right)
 d\mu^{(h)}(Q). \label{eq2:J_h}
\end{align}
The function $\frac{G_{D}(x,w) G_D(w,y)}{G_D(x,y) \delta(w)}$ is uniformly integrable in $x,y \in D$ against $|b(w)|dw$ (see the proof of \cite[Lemma 11]{KBTJ}). Hence, there exists $\delta>0$ such that for $\lambda(F)<\delta$,
$$
\int_F \frac{G_{D}(x,w) G_D(w,y)}{G_D(x,y)\delta_D(w)} |b(w)|dw< \varepsilon, \qquad x,y \in D,
$$
and consequently
\begin{align*}
&\int_F \frac{G_{D}(x,w) M_D(w,Q)}{M_D(x,Q)\delta_D(w)} |b(w)|dw = \int_F \lim_{D \ni y\to Q} \frac{G_{D}(x,w) G_D(w,y)}{G_D(x,y)\delta_D(w)} |b(w)|dw\\
& = \lim_{D \ni y\to Q} \int_F  \frac{G_{D}(x,w) G_D(w,y)}{G_D(x,y)\delta_D(w)} |b(w)|dw \le \varepsilon.
\end{align*}
Now, (\ref{eq1:J_h}) follows from (\ref{eq2:J_h}) and Martin representation of $h$.
For $Q \in {\rm supp} \mu^{(h)}$, $\lim_{D \ni x \to Q} h(x) = \infty$ from the Lemma \ref{lem:limits_h}. Hence, by uniform integrability,
\begin{align*}
\lim_{D \ni x\ra Q} \frac{|J_h(x)|}{h(x)} & \le  c \lim_{D \ni x\ra Q} \int_{D} \frac{G_{D}(x,w) h(w)}{h(x) \delta_D(w)} |b(w)| dw \\
& =  c\int_{D} \lim_{D \ni x\ra Q} \frac{G_{D}(x,w) h(w)}{h(x) \delta_D(w)} |b(w)| dw = 0.
\end{align*}
 \end{proof}
 %%%%%%%%%%%%%%%%%%%%%%%%%%%%%%%%%%%%%%%%%%%%%%%%%%%%%%%%%%%%%%%%%%%%%%%%%%%%%%%%%%%%%%%%%%%%%%%%%%%%%%%%%%%%%%%%%%%%%%%%%%%
Now, we return to the Relative Fatou Theorem for $L$-harmonic functions.
 Let $u$ and $v$ be two non-negative singular  $L$-harmonic functions on $D$. By Theorem \ref{th:MartinRepr}, they
 have a Martin representation
 $$
  u(x)=\int_{\partial D} \tilde M_D(x,Q)d\mu(Q),\ \ 
 v(x)=\int_{\partial D} \tilde M_D(x,Q)d\nu(Q),\ \ \ x\in D
 $$
 where $\mu$ and $\nu$ are two Borel finite measures concentrated on $\partial D$.
 
We decompose the measure $\mu$ into its absolutely continuous and singular parts
 with respect to the measure $\nu$
 $$
 d\mu= f\;d\nu+d\mu_{sing}
 $$
with a non-negative function $f\in L^1(\nu)$ and $\nu({\rm supp}(\mu_{sing}))=0$.

\begin{thm}\label{th:Fatou}
 ({\bf Relative Fatou Theorem})
 For $\nu$-almost every point $Q\in\partial D$ we have
 \begin{equation}\label{eq:lim_Fatou}
  \lim_{x\ra Q}\frac{u(x)}{v(x)}=f(Q)
 \end{equation}
when $x\ra Q$ non-tangentially.
\end{thm}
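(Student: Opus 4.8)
The plan is to reduce the Relative Fatou Theorem for $L$-harmonic functions to the known $\alpha$-stable case (Theorem~\ref{FatouStable}) via the decomposition~(\ref{eq:2Quotients}). Write $u=u^*+I_{u^*}$ and $v=v^*+I_{v^*}$ using the perturbation formula~(\ref{eq:perturb_v}) from Corollary~\ref{cor:HarmPerturb}, where $u^*$ and $v^*$ are the associated non-negative singular $\alpha$-harmonic functions given by~(\ref{eq:v_star}). Their Martin measures are $d\mu^*(Q)=d\mu(Q)/l_D(x_0,Q)$ and $d\nu^*(Q)=d\nu(Q)/l_D(x_0,Q)$; since $l_D(x_0,\cdot)$ is continuous and bounded away from $0$ and $\infty$ on $\partial D$, the absolutely continuous part of $\mu$ with respect to $\nu$ has the \emph{same} density $f$ (up to the harmless factor which cancels) as the absolutely continuous part of $\mu^*$ with respect to $\nu^*$, and the singular parts correspond. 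So Theorem~\ref{FatouStable} applied to $u^*,v^*$ gives, for $\nu^*$-almost every (equivalently $\nu$-almost every) $Q\in\partial D$,
\begin{equation}
\lim_{x\ra Q}\frac{u^*(x)}{v^*(x)}=f(Q)
\end{equation}
as $x\ra Q$ non-tangentially.

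Next I would handle the correction factor in~(\ref{eq:2Quotients}), namely
\begin{equation}
\frac{1+I_{u^*}(x)/u^*(x)}{1+I_{v^*}(x)/v^*(x)}.
\end{equation}
By Corollary~\ref{cor:HarmCompar} and~(\ref{eq:GreenCompar}) we have $I_{h}(x)\sim J_h(x)$ for $h=u^*$ or $h=v^*$, where $J_h(x)=\int_D G_D(x,w)\,b(w)\cdot\nabla h(w)\,dw$. The crucial input is Lemma~\ref{lem_Quotient_lim0}: for $Q\in{\rm supp}(\mu^{(h)})$ and $x\ra Q$ non-tangentially, $J_h(x)/h(x)\ra 0$. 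Applying this to $h=v^*$, whose Martin measure is (a multiple of) $\nu$, we get for $\nu$-a.e.\ $Q$ (indeed every $Q\in{\rm supp}\nu$, and $\nu(\partial D\setminus{\rm supp}\nu)=0$) that $I_{v^*}(x)/v^*(x)\ra 0$, so the denominator tends to $1$. For the numerator I would split according to whether $Q$ lies in ${\rm supp}(\mu^*)$: on the set ${\rm supp}(\mu^*)$ Lemma~\ref{lem_Quotient_lim0} applied to $h=u^*$ gives $I_{u^*}(x)/u^*(x)\ra 0$; on its complement we need a separate argument.

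The main obstacle is exactly this last point: controlling $I_{u^*}(x)/u^*(x)$ as $x\ra Q$ for $Q\notin{\rm supp}(\mu^*)$, where Lemma~\ref{lem_Quotient_lim0} does not directly apply (and indeed $u^*(x)\ra 0$ there, so one cannot simply say the quotient is small). Here I would argue that for $\nu$-a.e.\ such $Q$ one has $f(Q)=0$: the set of $Q\notin{\rm supp}(\mu^*)$ carries no mass of $\mu^*$, hence none of the absolutely continuous part $f\,d\nu^*$, so $f=0$ $\nu^*$-a.e.\ (equivalently $\nu$-a.e.)\ on that set. Thus on that set we want to show $u(x)/v(x)\ra 0$ directly, not through the product formula. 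This follows from the comparability $u\sim u^*$ and $v\sim v^*$ (Corollary~\ref{cor:HarmCompar}): $u(x)/v(x)\le c\,u^*(x)/v^*(x)\ra c f(Q)=0$ by the $\alpha$-stable theorem. Combining the two cases, $u(x)/v(x)=\frac{u^*(x)}{v^*(x)}\cdot\frac{1+o(1)}{1+o(1)}\ra f(Q)$ on ${\rm supp}(\mu^*)$, and $u(x)/v(x)\ra 0=f(Q)$ off it, which is~(\ref{eq:lim_Fatou}) for $\nu$-a.e.\ $Q$; the full measure statement follows since the exceptional $\nu$-null set from Theorem~\ref{FatouStable} together with the (empty) exceptional sets above is still $\nu$-null.
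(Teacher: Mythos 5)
Your proposal is correct and follows essentially the same route as the paper: decompose via (\ref{eq:2Quotients}), apply the $\alpha$-stable Relative Fatou Theorem to $u^*$ and $v^*$ (whose Martin measures are $d\mu/l_D(x_0,\cdot)$ and $d\nu/l_D(x_0,\cdot)$), kill the correction factors with Lemma \ref{lem_Quotient_lim0} when $Q$ lies in the relevant supports, and handle $Q\in\mathrm{supp}(\nu)\setminus\mathrm{supp}(\mu)$ (where $f=0$ $\nu$-a.e.) by the boundedness/comparability of $u,u^*$ and $v,v^*$. The only cosmetic difference is that off $\mathrm{supp}(\mu)$ the paper gets $u^*/v^*\to 0$ directly from Lemma \ref{lem:limits_h}, while you invoke Theorem \ref{FatouStable} together with $f=0$ there; both are fine.
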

\begin{proof}
 We will use the Relative Fatou Theorem for the singular  $\alpha$-harmonic functions $u^*$
 and $v^*$ defined  according to (\ref{eq:v_star}).
 
 Let  $Q\in {\rm supp}(\nu)\setminus {\rm supp}(\mu)$. Then, if $x\ra Q$, 
 $v^*(x)\ra \infty$ and  $u^*(x)\ra 0$, so $\lim_{x\ra Q}\frac{u^*(x)}{v^*(x)} =0$.
  The  formulas (\ref{eq:Quotient_bounded}) and (\ref{eq:2Quotients}) imply that in this case 
  $$\lim_{x\ra Q}\frac{u(x)}{v(x)}=0.$$
 
 Let us consider the case $Q\in {\rm supp}(\nu)\cap {\rm supp}(\mu)$.  
 As 
 $$
\frac{d\mu(Q)}{l_D(x_0,Q)}= f\; \frac{d\nu(Q)}{l_D(x_0,Q)}+\frac{d\mu_{sing}(Q)}{l_D(x_0,Q)},
 $$
 the Relative Fatou Theorem for the singular  $\alpha$-harmonic functions $u^*$
 and $v^*$ says that  for $\nu$-almost every point $Q\in\partial D$ 
 $$
  \lim_{x\ra Q}\frac{u^*(x)}{v^*(x)}=f(Q)
 $$
when $x\ra Q$ non-tangentially.
The formula (\ref{eq:lim_Fatou}) then follows by the formula
 (\ref{eq:2Quotients}) and the Lemma \ref{lem_Quotient_lim0}.
\end{proof}

%%%%%%%%%%%%%%%%%%%%%%%%%%%%%%%%%%%%%%%%%%%%%%%%%%%%%%%%%%%%%%%%%%%%%%%%%%%%%%%%%%%%%%%%%
\section*{Acknowledgements}
We thank  Krzysztof Bogdan, Jacek Ma\l ecki and Micha\l\ Ryznar  for discussions about this paper.
%%%%%%%%%%%%%%%%%%%%%%%%%%%%%%%%%%%%%%%%%%%%%%%%%%%%%%%%%%%%%%%%%%%%%%%%%%%%%%%%%%%%%%%%%
\bibliographystyle{abbrv}

\begin{thebibliography}{10}

\bibitem{Aikawa} H. Aikawa,
\newblock {\em Boundary Harnack principle and Martin boundary for a uniform domain}, 
\newblock J. Math. Soc. Japan Vol. 53 (2001), No. 1, 119-145.


\bibitem{AlibImbert} N. Alibaud, C. Imbert,
\newblock {\em Fractional semi-linear parabolic equations with unbounded data}, 
\newblock Trans. Amer. Math. Soc. 361 (2009), 2527-2566
	      
	      
\bibitem{Ancona} A. Ancona, 
\newblock {\em Principe de Harnack \`a la fronti\`ere et th\'eor\`eme de Fatou pour un 
op\'erateur elliptique dans un domaine lipschitzien}, 
\newblock Ann. Inst. Fourier (Grenoble) 28 (1978), no. 4, 169-213.


\bibitem{ArmGard} D. Armitage, S. Gardiner,
\newblock {\em Classical Potential Theory},
\newblock Springer, London, 2001.


\bibitem{Bass} R. F. Bass, 
\newblock {\em Probabilistic Techniques in Analysis}, 
\newblock Springer-Verlag, New York, 1995.

\bibitem{BrandKarch} L. Brandolese, G. Karch, 
\newblock {\em Far field asymptotics of solutions to convection equation with anomalous diffusion}, 
\newblock J. Evolution Equations 8 (2008), 307-326.


\bibitem{KBHirosh}
K. Bogdan, 
\newblock {\em Representation of $\alpha$-harmonic functions in Lipschitz domains},
\newblock Hiroshima Math. J. 29 (1999), no. 2, 227-243.

\bibitem{KBbook}
 K. Bogdan, T. Byczkowski, T. Kulczycki, M.  Ryznar, R. Song, Z.  Vondracek, 
\newblock {\em Potential analysis of stable processes and its extensions}, 
\newblock Editors: P. Graczyk, A. Stos. Lecture Notes in Mathematics, 1980. Springer-Verlag, Berlin, 2009.


\bibitem{BD} K. Bogdan, B. Dyda,
\newblock {\em Relative Fatou theorem for harmonic functions of rotation invariant stable processes in smooth domains}, 
\newblock Studia Math. 157 (2003), 83-96. MR 1 980 119.


\bibitem{KBTJ0}
 K. Bogdan, T. Jakubowski,
\newblock {\em Estimates of heat kernel of fractional Laplacian perturbed by gradient operators.}
\newblock Comm. Math. Phys. 271 (2007), no. 1, 179-198.


\bibitem{KBTJ}
 K. Bogdan, T. Jakubowski,
\newblock {\em  Estimates of the Green Function for the Fractional Laplacian Perturbed by Gradient}.
\newblock  Potential Anal., DOI10.1007/s11118-011-9237-x, 2011.

\bibitem{KBTKMK}
K. Bogdan, T. Kulczycki, M. Kwa{\'s}nicki.
\newblock {\em Estimates and structure of {$\alpha$}-harmonic functions}.
\newblock Probab. Theory Related Fields, 140(3-4), 345-381, 2008.

\bibitem{KBTKAN}
 K.Bogdan,  T. Kulczycki, A. Nowak, 
\newblock {\em Gradient estimates for harmonic and q-harmonic functions of symmetric stable processes. }
\newblock  Illinois J. Math. 46 (2002), no. 2, 541-556.

\bibitem{Novikov1}
K. Borovkov, A. Novikov,  
\newblock {\em On exit times of Levy-driven Ornstein-Uhlenbeck processes}.
\newblock Statist. Probab. Lett. 78 (2008), no. 12, 1517-1525.


\bibitem{Caff} L. Caffarelli, 
\newblock {\em Variational problems for free boundaries for the fractional Laplacian},
\newblock J. Eur. Math. Soc. 12 (2010), no. 5, 1151-1179.


\bibitem{CK2} Z.-Q. Chen, P. Kim, 
\newblock {\em Green function estimates for censored stable processes},
\newblock Probab. Theory Related Fields 124 (2002) 595-610.


\bibitem{CK} Z.-Q. Chen, P. Kim, 
\newblock {\em Stability of Martin boundary under nonlocal Feynman-Kac perturbations},
\newblock Probab. Theory Related Fields 128 (2004) 525-564.


\bibitem{CKS} Z.-Q. Chen, P. Kim, R. Song,
\newblock {\em Dirichlet heat kernel estimates for fractional Laplacian with gradient perturbation}, 
\newblock To appear in Ann. Probab. (2010).


\bibitem{CS}
Z.-Q. Chen, R.~Song,
\newblock {\em Estimates on {G}reen functions and {P}oisson kernels for symmetric stable processes},
\newblock Math. Ann., 312 (3) 465--501, 1998.

\bibitem{CS1}
Z.-Q. Chen, R.~Song,
\newblock {\em Martin Boundary and Integral Representation for Harmonic Functions of Symmetric Stable Processes}
\newblock J. Func. Anal., 159 (1998) 267--294.


\bibitem{CrFZ} M. Cranston, E. Fabes, Z. Zhao,
\newblock {\em Conditional gauge and potential theory for the Schr{\"o}dinger operator}, 
\newblock Trans. Amer. Math. Soc., 307(1) (1988), 171-194.


\bibitem{CranstonZ}
M. Cranston, Z. Zhao, 
\newblock {\em Conditional transformation of drift formula and potential theory for $\frac12\Delta + b\cdot\nabla$,}
\newblock Comm. Math. Phys. 112 (1987), no. 4, 613-625. 

\bibitem{Doob} J.L. Doob, 
\newblock {\em Classical Potential Theory and Its Probabilistic Counterpart},
\newblock Springer-Verlag, New York, 1984.


\bibitem{FinoKarch} A. Fino, G. Karch,
\newblock {\em Decay of mass for nonlinear equation with fractional Laplacian}, 
\newblock Monatshefteur Mathematik 160 (2010) , no. 4, 375-384.


\bibitem{IfraR}
A. Ifra, L. Riahi, 
\newblock {\em Estimates of Green functions and harmonic measures for elliptic operators with singular drift terms.}
\newblock Publ. Mat. 49 (2005), no. 1, 159-177.


\bibitem{TJ-pms}
T.~Jakubowski.
\newblock The estimates for the {G}reen function in {L}ipschitz domains for the
  symmetric stable processes.
\newblock {\em Probab. Math. Statist.}, 22(2, Acta Univ. Wratislav. No. 2470), 419-441, 2002.


\bibitem{TJ1}
T. Jakubowski, 
\newblock {\em
The estimates of the mean first exit time from a ball for the $\alpha$-stable Ornstein-Uhlenbeck processes.}
\newblock Stochastic Process. Appl. 117 (2007), no. 10, 1540-1560. 

\bibitem{TJ2}
T. Jakubowski, 
\newblock {\em
On Harnack inequality for  $\alpha$-stable Ornstein-Uhlenbeck processes.}
\newblock  Math. Z. 258 (2008), no. 3, 609-628.

\bibitem{JerisonKenig} D.S. Jerison, C.E. Kenig, 
\newblock {\em Boundary value problems on Lipschitz domains},
\newblock In {\em Studies in partial differential equations}, volume 23 of {\em
  MAA Stud. Math.}, Math. Assoc. America, Washington, DC, 1982, 1-68.
  
\bibitem{Kim}
P. Kim, 
\newblock {\em Relative Fatou's theorem for $ (-\Delta)^{\alpha/2}$-harmonic functions in bounded $ \kappa$-fat open sets,}
\newblock J. Funct. Anal. 234 (2006), no. 1, 70-105.

\bibitem{KimL}
P. Kim, Y-R. Lee,  
\newblock {\em Generalized 3G theorem and application to relativistic stable process on non-smooth open sets},
J. Funct. Anal. 246 (2007), no. 1, 113-143. 

\bibitem{KimSDiff}
P. Kim, R. Song, 
\newblock {\em Two-sided estimates on the density of Brownian motion with singular drift},
\newblock Illinois J. Math. 50 (2006), no. 1-4, 635-688.

\bibitem{KimS}
P. Kim, R. Song,
\newblock {\em Boundary behavior of harmonic functions for truncated stable processes},
\newblock J. Theoret. Probab. 21 (2008), no. 2, 287-321. 


\bibitem{TK}
T. Kulczycki,
\newblock {\em Properties of Green function of symmetric stable processes},
\newblock Probab. Math. Statist. 17 (1997), no. 2, Acta Univ. Wratislav. No. 2029, 339-364.


\bibitem{KW} H. Kunita, S. Watanabe,
\newblock {\em Markov processes and Martin boundaries, part I},
\newblock Illinois J. Math. 9 (1965), 485-526.


\bibitem{TLuksStudia} T. Luks,
\newblock {\em Hardy spaces for the Laplacian with lower order perturbations},
\newblock Studia Math. 204 (2011), 39-62.


\bibitem{MRFatou} 
 K. Michalik, M. Ryznar,
\newblock{\em Relative Fatou theorem for $\alpha$-harmonic functions in Lipschitz domains},
\newblock Illinois J. Math. 48 (2004), no. 3, 977-998.


\bibitem{MR} 
K. Michalik, M. Ryznar, 
\newblock{\em Hardy spaces for $\alpha$-harmonic functions in regular domains}, 
\newblock Math. Z., Volume 265, Number 1 (2010), 173-186.


\bibitem{KMKS}
K. Michalik, K. Samotij,
\newblock{\em Martin representation for $\alpha$-harmonic functions}, 
\newblock Probab. Math. Statist. 20 (2000), no. 1, Acta Univ. Wratislav. No. 2246, 75-91. 


\bibitem {Murata} M. Murata,
\newblock {\em Semismall perturbations in the Martin theory for elliptic equations},
\newblock Israel J. Math. 102 (1997), 29-60.


\bibitem{Novikov2} A.A. Novikov,
\newblock{\em Martingales and first-exit times for the Ornstein-Uhlenbeck process with jumps}
\newblock Theory Probab. Appl., 48 (2003), 340-358.


\bibitem{Pinchover} Y. Pinchover,
\newblock{\em On positive solutions of second-order elliptic equations, stability results and classification}, 
\newblock Duke Math. J. 57 (1988), 955-980.


\bibitem{SW}  R. Song, J.-M. Wu,
\newblock {\em Boundary Harnack principle for symmetric stable processes},
\newblock J. Funct. Anal. 168 (1999), 403-427.


\bibitem{Stein1}  E.M. Stein, 
\newblock {\em Singular integrals and differentiability properties of functions},
\newblock Princeton University Press, Princeton 1970.


\bibitem{Stein2} E.M. Stein, 
\newblock {\em Boundary behavior of holomorphic functions of several complex variables}, 
\newblock Princeton University Press and University of Tokyo Press, Princeton, New Jersey, 1972.


\bibitem{Widman} K.O. Widman, 
\newblock {\em On the boundary behavior of solutions to a class of elliptic partial differential equations}, 
\newblock Ark. Mat. 6 (1966), 485-533.


\bibitem{Wu} J -M. G. Wu,
\newblock{\em Comparisons of kernel functions, boundary Harnack principle and relative Fatou theorem on Lipschitz domains},
\newblock Ann. Inst. Fourier (Grenoble) 28 (1978), no. 4, 147-167

\end{thebibliography}

\end{document}